\newtheorem{theorem}{Theorem}[section]
\newtheorem{lemma}[theorem]{Lemma}
\theoremstyle{definition}
\newtheorem{definition}[theorem]{Definition}
\numberwithin{equation}{section}
\newcommand{\bbC}{{\mathbb C}}
\renewcommand{\lg}{\eL(\eG)}
\newcommand{\lhk}{\eL(\eH,\eK)}
\newcommand{\lh}{\eL(\eH)}
\newcommand{\lk}{\eL(\eK)}
\newcommand{\lkperp}{\eL(\eK^\perp)}
\newcommand{\diag}{{\rm diag\,}}
\newcommand{\ran}{{\rm ran\,}}
\newcommand{\ip}[2]{\left<#1,#2\right>}
\newcommand{\eC}{{\mathfrak C}}
\newcommand{\eF}{{\mathfrak F}}
\newcommand{\eG}{{\mathfrak G}}
\newcommand{\eH}{{\mathfrak H}}
\newcommand{\eK}{{\mathfrak K}}
\newcommand{\eL}{{\mathfrak L}}
\newcommand{\eM}{{\mathfrak M}}
\newcommand{\eP}{{\mathfrak P}}
\newcommand{\fr}{Fej\'er-Riesz }
\newcommand{\bbD}{{\mathbb D}}
\newcommand{\bbT}{{\mathbb T}}
\newcommand{\thecircle}{{\bbT}}
\newcommand{\ranclosure}{\overline{{\rm ran\vphantom{+}}}\,}
\begin{document}

\title[The Operator Fej\'er-Riesz Theorem]
{The Operator Fej\'er-Riesz Theorem}

\author{Michael~A.~Dritschel} 

\address{School of Mathematics and
  Statistics, Herschel Building, University of Newcastle,
  Newcastle upon Tyne NE1 7RU, UK}

\email{m.a.dritschel@ncl.ac.uk}

\author{James Rovnyak} 

\address{University of Virginia, Department of Mathematics, P.~O.\ Box
  400137, Charlottes\-ville, VA 22904--4137}
\email{rovnyak@virginia.edu} 

\subjclass{Primary 47A68; Secondary 60G25, 47A56, 47B35, 42A05, 32A70,
  30E99}

\keywords{Trigonometric polynomial, Fejer-Riesz theorem, spectral
factorization, Schur complement, noncommutative polynomial, Toeplitz
operator, shift operator}

\dedicatory{To the memory of Paul Richard Halmos.}

\begin{abstract}
  
  The \fr theorem has inspired numerous generalizations in one and
  several variables, and for matrix- and operator-valued functions.
  This paper is a survey of some old and recent topics that center
  around Rosenblum's operator generalization of the classical \fr
  theorem.

\end{abstract}

\maketitle

\tableofcontents

\section{Introduction}\label{S:intro}
\bigskip

The classical \fr factorization theorem gives the form of a
nonnegative trigonometric polynomial on the real line, or,
equivalently, a Laurent polynomial that is nonnegative on the unit
circle.  For the statement, we write $\bbD = \{ z \colon |z|<1\}$ and
$\thecircle = \{ \zeta \colon |\zeta|=1\}$ for the open unit disk and unit
circle in the complex plane.

\medskip
\noindent {\bf \fr Theorem.}  {\it
  A Laurent polynomial \smash{$q(z) = \sum_{k=-m}^m q_k z^k$} which
  has complex coefficients and satisfies $q(\zeta) \ge 0$ for all
  $\zeta\in\thecircle$ can be written
\begin{equation*}
q(\zeta) = |p(\zeta)|^2 , \qquad \zeta \in\thecircle ,
\end{equation*}
for some polynomial $p(z) = p_0 + p_1 z + \cdots + p_m z^m$, and
$p(z)$ can be chosen to have no zeros in $\bbD$.
}
\medskip

The original sources are Fej\'er \cite{Fejer1916} and Riesz
\cite{Riesz1916}.  The proof is elementary and consists in showing
that the roots of $q(z)$ occur in pairs $z_j$ and $1/\bar z_j$ with
$|z_j| \ge 1$.  Then the required polynomial $p(z)$ is the product of
the factors $z - z_j$ adjusted by a suitable multiplicative
constant~$c$.  Details appear in many places; see e.g.\ \cite[p.\ 
20]{GSoriginal}, \cite[p.\ 235]{HeltonPutinar}, or \cite[p.\ 
26]{simon1}.

The \fr theorem arises naturally in spectral theory, the theory of
orthogonal polynomials, prediction theory, moment problems, and
systems and control theory.  Applications often require
generalizations to functions more general than Laurent polynomials,
and, more than that, to functions whose values are matrices or
operators on a Hilbert space.  The spectral factorization problem is
to write a given nonnegative matrix- or operator-valued function $F$
on the unit circle in the form $F=G^*G$ where $G$ has an analytic
extension to the unit disk (in a suitably interpreted sense).  The
focal point of our survey is the special case of a Laurent polynomial
with operator coefficients.

The operator \fr theorem (Theorem \ref{ofr}) obtains a conclusion
similar to the classical result for Laurent polynomial whose
coefficients are Hilbert space operators: if $Q_j$, $j=-m,\dots,m$,
are Hilbert space operators such that
\begin{equation}\label{jan19a}
  Q(\zeta) = \sum_{k=-m}^m Q_k \zeta^k \ge 0, \qquad \zeta \in \thecircle,
\end{equation}
then there is a polynomial $P(z) = P_0 + P_1 z + \cdots + P_m z^m$
with operator coefficients such that
\begin{equation}\label{jan19b}
  Q(\zeta) = P(\zeta)^*P(\zeta), \qquad \zeta \in \thecircle.
\end{equation}
This was first proved in full generality in 1968 by Marvin
Rosenblum~\cite{Rosenblum1968}.  The proof uses Toeplitz operators and
a method of Lowdenslager, and it is a fine example of operator theory
in the spirit of Paul Halmos.  Rosenblum's proof is reproduced
in~\S\ref{S:FejerRiesz}.

Part of the fascination of the operator \fr theorem is that it can be
stated in a purely algebraic way.  The hypothesis \eqref{jan19a} on
$Q(z)$ is equivalent to the statement that an associated Toeplitz
matrix is nonnegative.  The conclusion \eqref{jan19b} is equivalent to
$2m+1$ nonlinear equations whose unknowns are the coefficients
$P_0,P_1,\dots,P_m$ of $P(z)$.  Can it be that this system of
equations can be solved by an algebraic procedure?  The answer is,
yes, and this is a recent development.  The iterative procedure uses
the notion of a Schur complement and is outlined in \S\ref{S:Schur}.

There is a surprising connection between Rosenblum's proof of the
operator \fr theorem and spectral factorization.  The problem of
spectral factorization is formulated precisely in \S\ref{S:spectral},
using Hardy class notions.  A scalar prototype is Szeg{\H o}'s theorem
(Theorem \ref{nov23d}) on the representation of a positive integrable
and log-integrable function $w$ on the unit circle in the form $|h|^2$
for some $H^2$ function~$h$.  The operator and matrix counterparts of
Szeg{\H o}'s theorem, Theorems \ref{nov23c} and \ref{nov23cc}, have
been known for many years and go back to fundamental work in the 1940s
and 1950s which was motivated by applications in prediction theory
(see the historical notes at the end of \S\ref{S:spectral}).  We
present a proof that is new to the authors and we suspect not widely
known.  It is based on Theorem \ref{nov20c}, which traces its origins
to Rosenblum's implementation of the Lowdenslager method.  In
\S\ref{S:spectral} we also state without proof some special results
that hold in the matrix case.

The method of Schur complements points the way to an approach to
multivariable factorization problems, which is the subject
of~\S\ref{S:multivar}.  Even in the scalar case, the obvious first
ideas for multivariable generalizations of the \fr theorem are false
by well-known examples.  Part of the problem has to do with what one
might think are natural restrictions on degrees.  In fact, the
restrictions on degrees are not so natural after all.  When they are
removed, we can prove a result, Theorem \ref{jan16a}, that can be
viewed as a generalization of the operator \fr theorem in the strictly
positive case.  We also look at the problem of outer factorization, at
least in some restricted settings.

In recent years there has been increasing interest in noncommutative
function theory, especially in the context of functions of freely
noncommuting variables.  In \S{6} we consider noncommutative analogues
of the $d$-torus, and corresponding notions of nonnegative
trigonometric polynomials.  In the freely noncommutative setting,
there is a very nice version of the \fr theorem
(Theorem~\ref{jan24d}).  In a somewhat more general noncommutative
setting, which also happens to cover the commutative case as well, we
have a version of Theorem~\ref{jan16a} for strictly positive
polynomials (Theorem~\ref{jan12a}).

Our survey does not aim for completeness in any area.  In particular,
our bibliography represents only a selection from the literature.  The
authors regret and apologize for omissions.

\goodbreak
\section{The operator \fr theorem}\label{S:FejerRiesz}

In this section we give the proof of the operator \fr theorem by
Rosenblum \cite{Rosenblum1968}.  The general theorem had precursors.
A finite-dimensional version was given by Rosenblatt
\cite{Rosenblatt1958}, an infinite-dimensional special case by Gohberg
\cite{Gohberg1964}.

We follow standard conventions for Hilbert spaces and operators.  If
$A$ is an operator, $A^*$ is its adjoint.  Norms of vectors and
operators are written $\| \cdot \|$.  Except where noted, no
assumption is made on the dimension of a Hilbert space, and
nonseparable Hilbert spaces are allowed.

\begin{theorem}[Operator Fej\'er-Riesz Theorem]\label{ofr}
  Let $Q(z) = \sum_{k=-m}^m Q_k z^k$ be a Laurent polynomial with
  coefficients in $\lg$ for some Hilbert space~$\eG$.  If $Q(\zeta) \ge 0$
  for all $\zeta\in\thecircle$, then
\begin{equation}\label{aug27c}
Q(\zeta) = P(\zeta)^*P(\zeta) , \qquad \zeta \in\thecircle,
\end{equation}
for some polynomial $P(z) = P_0 + P_1 z + \cdots + P_m z^m$ with
coefficients in $\lg$.  The polynomial $P(z)$ can be chosen to be
outer.
\end{theorem}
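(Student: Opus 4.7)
The plan is to follow the Toeplitz-Lowdenslager route that Rosenblum used. I would work in the Hilbert space $\eH = L^2(\bbT;\eG)$ equipped with the bilateral shift $U$ (multiplication by $\zeta$), and regard the pointwise square root $R(\zeta) = Q(\zeta)^{1/2}$ as a bounded self-adjoint multiplication operator on $\eH$. The principal object is the closed subspace
\begin{equation*}
\eM \;=\; \overline{R\,H^2(\eG)} \,\subset\, \eH,
\end{equation*}
which is $U$-invariant because $H^2(\eG)$ is and $R$ commutes with $U$.

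Next I would invoke the Beurling-Lax-Halmos representation for shift-invariant subspaces of $L^2(\bbT;\eG)$ and write $\eM = \Theta H^2(\mathfrak{E})$, where $\mathfrak{E}$ is a Hilbert space (essentially the wandering subspace $\eM \ominus U\eM$) and $\Theta$ is an inner multiplier with values in $\eL(\mathfrak{E},\eG)$. Any doubly-invariant summand can be absorbed into $\Theta$, and the argument proceeds the same. Set
\begin{equation*}
P(\zeta) \;=\; \Theta(\zeta)^* R(\zeta) \;=\; \Theta(\zeta)^* Q(\zeta)^{1/2}.
\end{equation*}
For each constant $f\in\eG$, the vector $Rf$ lies in $\eM = \Theta H^2(\mathfrak{E})$, so $\Theta^* Rf \in H^2(\mathfrak{E})$, exhibiting $P$ as an $H^2$-function with coefficients in $\eL(\eG,\mathfrak{E})$. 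Since the range of $R(\zeta)$ is almost-everywhere contained in the range of $\Theta(\zeta)$, one has $\Theta\Theta^* R = R$, and hence $P^*P = R\,\Theta\Theta^* R = R^2 = Q$ pointwise on $\bbT$.

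The main obstacle is showing $P$ is a genuine \emph{polynomial} of degree at most $m$, which is what distinguishes this from a general spectral-factorization conclusion. This reduces to the statement that $Pf \perp \zeta^{m+1} H^2(\mathfrak{E})$ for every $f \in \eG$, equivalently $Rf \perp U^{m+1}\eM$, that is,
\begin{equation*}
\int_{\bbT} \bar\zeta^{\,m+1} \ip{Q(\zeta) f}{g(\zeta)}_{\eG}\, dm(\zeta) \;=\; 0 \qquad (g \in H^2(\eG)).
\end{equation*}
Since $Q$ has Laurent degree at most $m$, the function $\bar\zeta^{m+1}Q(\zeta)f$ has Fourier support in $\{-2m-1,\dots,-1\}$, while $g$ has non-negative Fourier support, so the integral is zero by Parseval. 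Hence $P = P_0 + P_1\zeta + \cdots + P_m\zeta^m$.

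Finally, that $P$ may be taken outer is automatic: the closed shift-invariant span of $P\eG$ equals $\Theta^*\overline{RH^2(\eG)} = \Theta^*\eM = H^2(\mathfrak{E})$, which is the defining outer property. A purely cosmetic last step embeds $\mathfrak{E}$ isometrically into $\eG$ (possible because $\dim \mathfrak{E} \le \dim \eG$) and extends each $P_j$ by zero to an element of $\lg$.
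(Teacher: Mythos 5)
Your proposal follows, up to a change of picture, the Toeplitz--Lowdenslager route that the paper uses: your space $\eM = \overline{R\,H^2(\eG)}\subset L^2(\bbT;\eG)$ is exactly the $\eK_F$ of Lemma~\ref{nov15a}, and your restricted multiplication operator $U\vert_\eM$ is the isometry $S_F$ there. So the geometry is right, the Fourier support computation is a legitimate way to handle the degree bound, and the outerness observation at the end is fine. But there is a genuine gap at the crucial step.

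The assertion ``any doubly-invariant summand can be absorbed into $\Theta$'' is false. The Beurling--Lax--Halmos theorem for invariant subspaces of the bilateral shift on $L^2(\bbT;\eG)$ gives $\eM = \Theta H^2(\mathfrak{E}) \oplus \eN$ with $\Theta$ inner and $\eN = \bigcap_{n\ge 0} U^n\eM$ doubly invariant, and the doubly-invariant part is an obstruction, not a cosmetic nuisance: if $\eN\neq\{0\}$, then for a constant $f\in\eG$ one has $Rf = \Theta h + n$ with $n\in\eN$ possibly nonzero, so $\ran R(\zeta)\not\subseteq\ran\Theta(\zeta)$ on a set of positive measure, $\Theta\Theta^*R\neq R$, and your identity $P^*P = R\Theta\Theta^*R = Q$ fails. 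Indeed, proving that $\eN = \{0\}$ \emph{is} the theorem; this is exactly what the paper's ``Claim'' in the proof of Theorem~\ref{ofr} accomplishes (equivalently, it is what Lemma~\ref{nov15a} requires: that $U\vert_\eM$ be a pure shift). As stated, your argument assumes the conclusion of Lowdenslager's criterion rather than establishing it.

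The good news is that the repair lives inside your own write-up. The Fourier-support computation you use to cap the degree of $P$ actually proves more: it shows that for every constant $f\in\eG$ and every $j\ge 0$, the vector $U^j R f$ is orthogonal to $U^{m+1+j}\eM$. Since the vectors $U^jRf$ with $j\ge 0$, $f\in\eG$ span a dense subspace of $\eM$, any $g\in\bigcap_{n\ge 0}U^n\eM$ is orthogonal to all of $\eM$ yet lies in $\eM$, hence $g=0$. Do that computation \emph{first}, conclude that $\eM$ is purely simply invariant, and then invoke Beurling--Lax--Halmos to get $\eM = \Theta H^2(\mathfrak{E})$ with no doubly-invariant part; the rest of your argument then goes through unchanged. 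You should also supply the brief separability argument that passes the a.e.\ inclusion $R(\zeta)f\in\ran\Theta(\zeta)$ through a countable dense set of $f$ to a single full-measure set, and a word on why $\dim\mathfrak{E}\le\dim\eG$ (the paper gives a short argument for the analogous inequality in Lemma~\ref{sep22a}).

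Even with these corrections, your line of reasoning is genuinely the same as Rosenblum's; you are working on the other side of the Fourier transform (with $L^2(\bbT;\eG)$ and multiplication by $\zeta$ instead of $\ell^2$-sequences and Toeplitz matrices), which is also the point of view taken in \S\ref{S:spectral}.
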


The definition of an outer polynomial will be given later; in the
scalar case, a polynomial is outer if and only if it has no zeros in
$\bbD$.

The proof uses (unilateral) shift and Toeplitz operators (see
\cite{brownhalmos} and \cite{halmos1961}).  By a {\bf shift operator}
here we mean an isometry $S$ on a Hilbert space $\eH$ such that the
unitary component of $S$ in its Wold decomposition is trivial.  With
natural identifications, we can write $\eH = \eG \oplus \eG \oplus
\cdots$ for some Hilbert space $\eG$ and
\begin{equation*}
S (h_0,h_1,\dots) = (0,h_0,h_1,\dots)  
\end{equation*}
when the elements of $\eH$ are written in sequence form.  Suppose
that such a shift $S$ is chosen and fixed.  If $T,A\in\lh$, we say
that $T$ is {\bf Toeplitz} if $S^*TS=T$, and that $A$ is {\bf
  analytic} if $AS=SA$.  An analytic operator $A$ is said to be {\bf
  outer} if $\ranclosure A$ is a subspace of $\eH$ of the form $\eF
\oplus \eF \oplus \cdots$ for some closed subspace $\eF$ of $\eG$.

As block operator matrices, Toeplitz and analytic operators have the
forms
  \begin{equation}\label{sep30a}
  T = \begin{pmatrix}
    T_0 & T_{-1} & T_{-2} & \cdots\\
    T_1 & T_0    & T_{-1} & \ddots \\
    T_2 & T_1  & T_0 & \ddots \\
    \vdots    & \ddots  & \ddots & \ddots
  \end{pmatrix} ,
\qquad
 A = \begin{pmatrix}
    A_0 & 0 & 0 & \cdots\\
    A_1 & A_0    & 0 & \ddots \\
    A_2 & A_1  & A_0 & \ddots \\
    \vdots    & \ddots  & \ddots & \ddots
  \end{pmatrix} .
\end{equation}
Here
\begin{equation}
  \label{aug28b}
  T_j = 
  \begin{cases}
    E_0^* {S^*}^j T E_0 \vert \eG ,     &\quad j \ge 0, \\[3pt]
    E_0^* T S^{|j|} E_0 \vert \eG,      &\quad j < 0,
  \end{cases}
\end{equation}
where $E_0 g = (g,0,0,\dots)$ is the natural embedding of $\eG$ into
$\eH$.  For examples, consider Laurent and analytic polynomials $Q(z)
= \sum_{k=-m}^m Q_k z^k$ and $P(z) = P_0 + P_1 z + \cdots + P_m z^m$
with coefficients in $\lg$.  Set $Q_j=0$ for $|j|>m$ and $P_j=0$ for
$j>m$.  Then the formulas
  \begin{equation}\label{aug31a}
  T_Q = \begin{pmatrix}
    Q_0 & Q_{-1} & Q_{-2} & \cdots\\
    Q_1 & Q_0    & Q_{-1} & \ddots \\
    Q_2 & Q_1  & Q_0 & \ddots \\
    \vdots    & \ddots  & \ddots & \ddots
  \end{pmatrix} ,
\qquad
 T_P = \begin{pmatrix}
    P_0 & 0 & 0 & \cdots\\
    P_1 & P_0    & 0 & \ddots \\
    P_2 & P_1  & P_0 & \ddots \\
    \vdots    & \ddots  & \ddots & \ddots
  \end{pmatrix} 
\end{equation}
define bounded operators on $\eH$.  Boundedness follows from the
identity
\begin{equation}\label{aug28a}
    \int_\thecircle {\ip{Q(\zeta)f(\zeta)}{g(\zeta)}}_\eG\; 
                               d\sigma(\zeta)
=
    \sum_{k,j=0}^\infty {\ip{Q_{j-k}f_k}{g_j}}_\eG ,
\end{equation}
where $\sigma$ is normalized Lebesgue measure on $\thecircle$ and
$f(\zeta) = f_0 + f_1 \zeta + f_2 \zeta^2 + \cdots$ and $g(\zeta) =
g_0 + g_1 \zeta + g_2 \zeta^2 + \cdots$ have coefficients in $\eG$,
all but finitely many of which are zero.  The operator $T_Q$ is
Toeplitz, and $T_P$ is analytic.  Moreover,
\begin{enumerate}
\item[$\bullet$] $Q(\zeta) \ge 0$ for all $\zeta\in\thecircle$
  if and only if $T_Q \ge 0$;
\smallskip

\item[$\bullet$] $Q(\zeta) = P(\zeta)^*P(\zeta)$ for all $\zeta
  \in\thecircle$ if and only if $T_Q = T_P^*T_P$.
\end{enumerate}

\begin{definition}\label{sep30b}
  We say that the polynomial $P(z)$ is {\bf outer} if the analytic
  Toeplitz operator $A=T_P$ is outer.
\end{definition}

In view of the example \eqref{aug31a}, the main problem is to write a
given nonnegative Toeplitz operator $T$ in the form $T=A^*A$, where
$A$ is analytic.  We also want to know that if $T=T_Q$ for a Laurent
polynomial~$Q$, then we can choose $A = T_P$ for an outer analytic
polynomial of the same degree.  Lemmas \ref{sep22a} and \ref{sep26k}
reduce the problem to showing that a certain isometry is a shift
operator.

\begin{lemma}[Lowdenslager's Criterion]\label{sep22a}
  Let $\eH$ be a Hilbert space, and let $S\in\lh$ be a shift operator.
  Let $T\in\lh$ be Toeplitz relative to $S$ as defined above, and
  suppose that $T \ge 0$.  Let $\eH_T$ be the closure of the range of
  $T^{1/2}$ in the inner product of $\eH$.  Then there is an isometry
  $S_T$ mapping $\eH_T$ into itself such that
\begin{equation*}
  S_T T^{1/2} f = T^{1/2}Sf, \qquad f \in\eH.
\end{equation*}
In order that $T=A^*A$ for some analytic operator $A \in \lh$, it is
necessary and sufficient that $S_T$ is a shift operator.  In this
case, $A$ can be chosen to be outer.
\end{lemma}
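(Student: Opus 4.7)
The plan is to break the lemma into three tasks: construct $S_T$, prove necessity by building an intertwining isometry $V$, and prove sufficiency (with outerness) by a Wold-type construction going in the reverse direction. For the construction of $S_T$, I would use the Toeplitz identity $S^*TS=T$: for each $f\in\eH$,
\[
\|T^{1/2}Sf\|^2 = \ip{S^*TSf}{f} = \ip{Tf}{f} = \|T^{1/2}f\|^2,
\]
so the assignment $T^{1/2}f \mapsto T^{1/2}Sf$ is well defined and isometric on $\ran T^{1/2}$; extending by continuity to $\eH_T=\overline{\ran T^{1/2}}$ produces the required isometry $S_T$.

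For necessity, suppose $T=A^*A$ with $A$ analytic. Then $\|Af\|^2=\|T^{1/2}f\|^2$, so $T^{1/2}f\mapsto Af$ extends to an isometry $V\colon \eH_T\to\eH$ satisfying $VT^{1/2}=A$. The intertwining $VS_T=SV$ follows immediately from $AS=SA$ applied to $T^{1/2}$. Since $V$ isometrically embeds $S_T$ into the shift $S$, any $v\in\bigcap_n S_T^n\eH_T$ satisfies $Vv = V S_T^n w_n = S^n V w_n \in \bigcap_n S^n\eH=\{0\}$, and isometry of $V$ forces $v=0$. Therefore $S_T$ is a shift.

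For sufficiency, assume $S_T$ is a shift with wandering subspace $\eF = \eH_T\ominus S_T\eH_T$, so $\eH_T=\bigoplus_{n\ge0}S_T^n\eF$ by Wold. Using $\eH=E_0\eG\oplus S\eH$, I compute $S_T\eH_T=\overline{T^{1/2}S\eH}$; the orthogonal projection of $\eH_T$ onto $\eF$ then annihilates $T^{1/2}S\eH$, so $\eF$ equals the closure of the image of $T^{1/2}E_0\eG$ under this projection, yielding the key dimension estimate $\dim\eF\le\dim\eG$. Choose any isometry $W\colon\eF\to\eG$, identify $\eG$ with $E_0\eG\subseteq\eH$, and define $V\colon\eH_T\to\eH$ by $V(S_T^n f)=S^n(Wf)$ for $f\in\eF$ and $n\ge0$; this is a well-defined isometry satisfying $VS_T=SV$. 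Set $A=VT^{1/2}$. Then $A^*A=T^{1/2}V^*VT^{1/2}=T$, the intertwining gives $AS=VS_TT^{1/2}=SVT^{1/2}=SA$ (so $A$ is analytic), and $\overline{\ran A}=V\eH_T=\bigoplus_{n\ge0}S^n(W\eF)$ which, setting $\eF'=W\eF\subseteq\eG$, equals $\eF'\oplus\eF'\oplus\cdots$, showing $A$ is outer.

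The main obstacle I anticipate is the dimension bound $\dim\eF\le\dim\eG$: without it one could not fit the Wold model for $S_T$ inside the ambient $\eH=\eG\oplus\eG\oplus\cdots$, and both the outer conclusion and even the existence of an analytic $A$ with $A^*A=T$ would be in jeopardy. Once that bound is in hand, the rest is essentially bookkeeping with the Wold decomposition.
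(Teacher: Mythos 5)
Your proof is correct and essentially matches the paper's: the same construction of $S_T$ from $S^*TS=T$, the same intertwining-isometry argument for necessity via $T^{1/2}f\mapsto Af$, and the same Wold-plus-embedding argument for sufficiency once $\dim\eF\le\dim\eG$ is established. You reach the dimension bound and the purity of $S_T$ by slightly different but equally standard means---projecting $T^{1/2}E_0\eG$ onto the wandering space $\eF$ rather than using $T^{1/2}S_T^*=S^*T^{1/2}$ to inject $\eF$ into $\ker S^*$, and testing $\bigcap_{n\ge 0}S_T^n\eH_T=\{0\}$ rather than $S_T^{*n}\to 0$ strongly.
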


\begin{proof}
  The existence of the isometry $S_T$ follows from the identity
  $S^*TS=T$, which implies  that $T^{1/2}Sf$ and $T^{1/2} f$ have
  the same norms for any $f\in\eH$.
  
  If $S_T$ is a shift operator, we can view $\eH_T$ as a direct sum
  $\eH_T = \eG_T \oplus \eG_T \oplus \cdots$ with $S_T (h_0,h_1,\dots)
  = (0,h_0,h_1,\dots)$.  Here $\dim \eG_T \le \dim \eG$.  To see this,
  notice that a short argument shows that $T^{1/2}S_T^*$ and $S^*
  T^{1/2}$ agree on $\eH_T$, and therefore $T^{1/2} (\ker S_T^* )
  \subseteq \ker S^*$.  The dimension inequality then follows because
  $T^{1/2}$ is one-to-one on the closure of its range.  Therefore we
  may choose an isometry $V$ from $\eG_T$ into $\eG$.  Define an
  isometry $W$ on $\eH_T$ into $\eH$ by
\begin{equation*}
  W(h_0,h_1,\dots) = (Vh_0,Vh_1,\dots).
\end{equation*}
Define $A \in\lh$ by mapping $\eH$ into $\eH_T$ via $T^{1/2}$ and then
$\eH_T$ into $\eH$ via $W$:
\begin{equation*}
  Af = WT^{1/2}f, \qquad f\in\eH.
\end{equation*}
Straightforward arguments show that $A$ is analytic, outer, and
$T=A^*A$.

Conversely, suppose that $T=A^*A$ where $A\in\lh$ is analytic.  Define
an isometry $W$ on $\eH_T$ into $\eH$ by $WT^{1/2}f = Af$, $f \in\eH$.
Then $WS_T = SW$, and hence ${S_T^*}^n = W^*{S^*}^n W$ for all $n \ge
1$.  Since the powers of $S^*$ tend strongly to zero, so do the powers
of $S_T^*$, and therefore $S_T$ is a shift operator.
\end{proof}

\begin{lemma}\label{sep26k}
  In Lemma $\ref{sep22a}$, let $T=T_Q$ be given by $\eqref{aug31a}$
  for a Laurent polynomial $Q(z)$ of degree $m$.  If $T=A^*A$ where
  $A\in\lh$ is analytic and outer, then $A= T_P$ for some outer
  analytic polynomial $P(z)$ of degree $m$.
\end{lemma}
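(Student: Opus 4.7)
My plan is to show that the hypotheses force $A_j = 0$ for every $j > m$, where $A_0, A_1, A_2, \ldots$ denote the block entries of the analytic Toeplitz representation of $A$ in \eqref{sep30a}. Once this is done, $A = T_P$ for the analytic polynomial $P(z) = A_0 + A_1 z + \cdots + A_m z^m$, and $P$ is outer by Definition~\ref{sep30b} because $A$ is outer by hypothesis. The strategy is to show that, for every $g \in \eG$, the vector ${S^*}^{m+1} A E_0 g = (A_{m+1} g, A_{m+2} g, A_{m+3} g, \ldots)$ lies simultaneously in $\ranclosure A$ and in its orthogonal complement, and hence vanishes.

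For the perpendicularity, I would use analyticity: from $AS = SA$ one gets $A^* {S^*}^{m+1} = {S^*}^{m+1} A^*$, so
\begin{equation*}
A^* \bigl( {S^*}^{m+1} A E_0 g \bigr) = {S^*}^{m+1} A^* A E_0 g = {S^*}^{m+1} T_Q E_0 g.
\end{equation*}
Since $Q_j = 0$ for $|j| > m$, the vector $T_Q E_0 g = (Q_0 g, Q_1 g, \ldots, Q_m g, 0, 0, \ldots)$ is supported in the first $m+1$ block coordinates, and ${S^*}^{m+1}$ annihilates it. Hence ${S^*}^{m+1} A E_0 g \in \ker A^* = (\ranclosure A)^\perp$.

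For the complementary membership, I would invoke the outer hypothesis: write $\ranclosure A = \eF \oplus \eF \oplus \cdots$ for some closed subspace $\eF \subseteq \eG$. The vector $A E_0 g = (A_0 g, A_1 g, A_2 g, \ldots)$ belongs to $\ranclosure A$, so each block entry $A_j g$ belongs to $\eF$. Consequently, every block entry of ${S^*}^{m+1} A E_0 g$ belongs to $\eF$, and therefore ${S^*}^{m+1} A E_0 g \in \eF \oplus \eF \oplus \cdots = \ranclosure A$. Combined with the previous step, ${S^*}^{m+1} A E_0 g = 0$ for every $g \in \eG$, so $A_j = 0$ for all $j > m$. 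Finally, the $(m,0)$ block of $A^*A = T_Q$ collapses, after the vanishing just proved, to $A_0^* A_m = Q_m$, so $A_m \ne 0$ whenever $Q$ has degree exactly $m$, and $P$ has degree $m$ as claimed.

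The main obstacle is bridging the constraint $A^*A = T_Q$, which naturally produces a perpendicularity statement via $\ker A^* = (\ranclosure A)^\perp$, with information about the individual coefficients $A_j$ themselves. The outer hypothesis supplies precisely the needed bridge, since it guarantees that $\ranclosure A$ has the coordinatewise direct-sum form $\eF \oplus \eF \oplus \cdots$, into which shifted segments of $A E_0 g$ automatically fall. Without outerness, the identity $A^* ({S^*}^{m+1} A E_0 g) = 0$ would constrain but not determine the higher-order coefficients of $A$.
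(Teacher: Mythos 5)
Your proof is correct and follows essentially the same strategy as the paper: one shows ${S^*}^{m+1}AE_0 = 0$ by proving that ${S^*}^{m+1}AE_0\,\eG$ lies both in $\ranclosure A$ (via outerness) and in its orthogonal complement (via the Toeplitz hypothesis $A^*A = T_Q$). The only cosmetic difference is that you obtain the perpendicularity in one step by commuting ${S^*}^{m+1}$ past $A^*$ to land in $\ker A^*$, whereas the paper verifies $\ran {S^*}^{m+1}AE_0 \perp \ran AS^kE_0$ for every $k\ge 0$ and then passes to the closed span $\ranclosure A$; the two computations are equivalent.
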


\begin{proof}
  Let $Q(z) = \sum_{k=-m}^m Q_k z^k$.  Recall that $Q_j = 0$ for
  $|j|>m$.  By \eqref{aug28b} applied to $A$, what we must show is
  that ${S^*}^j AE_0 = 0$ for all $j>m$.  It is sufficient to show
  that ${S^*}^{m+1} AE_0 = 0$.  By \eqref{aug28b} applied to $T$,
  since $T=A^*A$ and $A$ is analytic,
  \begin{equation*}
    E_0^*A^*{S^*}^j AE_0 = E_0^*{S^*}^j TE_0 = Q_j = 0,
    \qquad j>m.
  \end{equation*}
  It follows that $\ran {S^*}^{m+1} AE_0 \perp \ran AS^kE_0$ for all
  $k \ge 0$, and therefore
  \begin{equation}\label{aug31c}
    \ran {S^*}^{m+1} AE_0 \perp \ran A .
  \end{equation}
  Since $A$ is outer, $\ranclosure A$ reduces $S$, and so $\ran
  {S^*}^{m+1} AE_0 \subseteq \ranclosure A$.  Therefore
${S^*}^{m+1} AE_0 = 0$ by \eqref{aug31c}, and the result follows.
\end{proof}

The proof of the operator \fr theorem is now easily completed.

\begin{proof}[Proof of Theorem $\ref{ofr}$]
  Define $T = T_Q$ as in \eqref{aug31a}.  Lemmas \ref{sep22a} and
  \ref{sep26k} reduce the problem to showing that the isometry $S_T$
  is a shift operator.  It is sufficient to show that $\| {S_T^*}^n f
  \| \to 0$ for every $f$ in $\eH_T$.

  \noindent {\bf Claim:} If $f = T^{1/2}h$ where $h \in \eH$ has the 
  form $h = (h_0,\dots,h_r,0,\dots)$, then ${S_T^*}^n f = 0$ for all
  sufficiently large~$n$.

  For if $u \in\eH$ and $n$ is any positive integer, then
  \begin{equation*}
    {\ip{{S_T^*}^n f}{T^{1/2}u}}_{\eH_T}
=
    {\ip{f}{S_T^n T^{1/2}u}}_{\eH_T} 
=
    {\ip{T^{1/2}h}{T^{1/2}S^n u}}_{\eH}
=
    {\ip{Th}{S^n u}}_{\eH} .
  \end{equation*}
  By the definition of $T = T_Q$, $Th$ has only a finite number of
  nonzero entries (depending on $m$ and $r$), and the first $n$
  entries of $S^n u$ are zero (irrespective of $u$).  The claim
  follows from the arbitrariness of~$u$.
  
  In view of the claim, $\| {S_T^*}^n f \| \to 0$ for a dense set of
  vectors in $\eH_T$, and hence by approximation this holds for all
  $f$ in $\eH_T$. Thus $S_T$ is a shift operator, and the result
  follows.
\end{proof}

A more general result is proved in the original version of Theorem
\ref{ofr} in \cite{Rosenblum1968}.  There it is only required that
$Q(z)g$ is a Laurent polynomial for a dense set of $g$ in $\eG$ (the
degrees of these polynomials can be unbounded).  We have omitted an
accompanying uniqueness statement: the outer polynomial $P(z)$ in
Theorem \ref{ofr} can be chosen such that $P(0) \ge 0$, and then it is
unique.  See \cite{BC1992} and \cite{RRbook}.

\goodbreak
\section{Method of Schur complements}\label{S:Schur}

We outline now a completely different proof of the operator \fr
theorem.  The proof is due to Dritschel and Woerdeman \cite{DW2005}
and is based on the notion of a Schur complement.  The procedure
constructs the outer polynomial $P(z) = P_0 + P_1 z + \cdots + P_m
z^m$ one coefficient at a time.  A somewhat different use of Schur
complements in the operator \fr theorem appears in Dritschel
\cite{MAD2004}.  The method in \cite{MAD2004} plays a role in the
multivariable theory, which is taken up in \S\ref{S:multivar}.

We shall explain the main steps of the construction assuming the
validity of two lemmas.  Full details are given in \cite{DW2005} and
also in the forthcoming book \cite{BWbook} by Bakonyi and Woerdeman.
The authors thank Mihaly Bakonyi and Hugo Woerdeman for advance copies
of key parts of \cite{BWbook}, which has been helpful for our
exposition.  The book \cite{BWbook} includes many additional results
not discussed here.

\begin{definition}\label{nov4e}
  Let $\eH$ be a Hilbert space.  Suppose $T\in\lh$, $T\ge 0$.  Let
  $\eK$ be a closed subspace of $\eH$, and let $P_\eK \in\lhk$ be
  orthogonal projection of $\eH$ onto $\eK$.  Then (see
  Appendix~\ref{S:complement}, Lemma \ref{sep26c}) there is a unique
  operator $S\in\lk$, $S \ge 0$, such that
  \begin{enumerate}
  \item[(i)] $T-P_\eK^* S P_\eK \ge 0$;
  \item[(ii)] if $\widetilde S \in\lk$, $\widetilde S \ge 0$, and
    $T-P_\eK^* \widetilde S P_\eK \ge 0$, then $\widetilde S \le S$.
  \end{enumerate}
  We write $S = S(T,\eK)$ and call $S$ the {\bf Schur complement of
    $T$ supported on $\eK$}.
 \end{definition}
 
 Schur complements satisfy an inheritance property, namely, if $\eK_-
 \subseteq \eK_+ \subseteq \eH$, then $S(T,\eK_-) =
 S(S(T,\eK_+),\eK_-)$.  If $T$ is specified in matrix form,
\begin{equation*}
  T = 
  \begin{pmatrix}
    A & B^* \\ B & C
  \end{pmatrix} \colon \eK \oplus \eK^\perp \to \eK \oplus \eK^\perp ,
\end{equation*}
then $S = S(T,\eK)$ is the largest nonnegative operator in $\lk$ such
that
  \begin{equation*}
    \begin{pmatrix}
    A-S & B^* \\ B & C
  \end{pmatrix} \ge 0.
  \end{equation*}
  The condition $T \ge 0$ is equivalent to the existence of a
  contraction $G \in \eL(\eK,\eK^\perp)$ such that $B = C^\frac12 G
  A^\frac12$ (Appendix~\ref{S:complement}, Lemma \ref{sep27b}).  In
  this case, $G$ can be chosen so that it maps $\ranclosure A$ into
  $\ranclosure C$ and is zero on the orthogonal complement of
  $\ranclosure A$, and then
\begin{equation*}
  S = A^\frac12 (I-G^*G) A^\frac12 .
\end{equation*}
When $C$ is invertible, this reduces to the familiar formula $S = A -
B^*C^{-1}B$.

  \begin{lemma}\label{lemma2}
    Let $M\in\lh$, $M\ge 0$, and suppose that
    \begin{equation*}
      M = \begin{pmatrix}
    A & B^* \\ B & C
  \end{pmatrix} \colon \eK \oplus \eK^\perp \to \eK \oplus \eK^\perp
    \end{equation*}
    for some closed subspace $\eK$ of $\eH$.  

\noindent $(1)$  If $S(M,\eK)=P^*P$ and
    $C=R^*R$ for some $P\in \lk$ and $R\in\lkperp$, then there is a
    unique $X \in \eL(\eK,\eK^\perp)$ such that
      \begin{equation}\label{oct10b}
        M = 
        \begin{pmatrix}
          P^*&X^* \\ 0&R^*
        \end{pmatrix}
        \begin{pmatrix}
          P&0 \\ X&R
        \end{pmatrix}  
\qquad\text{and}\qquad
        \ran X \subseteq \ranclosure R.
      \end{equation}

\noindent $(2)$
      Conversely, if $\eqref{oct10b}$ holds for some operators
      $P,X,R$, then $S(M,\eK)=P^*P$.
  \end{lemma}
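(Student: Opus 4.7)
The plan is to expand the block product and see that the stated factorization is equivalent to the three identities $A = P^*P + X^*X$, $B = R^*X$, and $C = R^*R$. The last is given in the hypothesis of $(1)$, so the work in $(1)$ reduces to producing a unique $X \in \eL(\eK,\eK^\perp)$ satisfying $X^*X = A - P^*P$, $R^*X = B$, and $\ran X \subseteq \ranclosure R$, while $(2)$ reduces to reading the Schur complement identity off the same three equations.

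For existence in $(1)$, the hypothesis $S(M,\eK)=P^*P$ supplies two pieces of information: that $A - P^*P \ge 0$, and that
\[
M - P_\eK^* P^*P P_\eK = \begin{pmatrix} A - P^*P & B^* \\ B & C \end{pmatrix} \ge 0
\]
has Schur complement on $\eK$ equal to zero (by maximality of $P^*P$, any nonzero amount added would violate condition (ii) of Definition \ref{nov4e}). Applying Lemma \ref{sep27b} to the displayed matrix yields a contraction $G \in \eL(\eK,\eK^\perp)$, mapping $\ranclosure(A-P^*P)$ into $\ranclosure C$ and vanishing on its orthogonal complement, with $B = C^{1/2} G (A-P^*P)^{1/2}$. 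The vanishing of the Schur complement combined with the formula $S = A^{1/2}(I - G^*G) A^{1/2}$ recalled in the excerpt forces $(A-P^*P)^{1/2}(I - G^*G)(A-P^*P)^{1/2} = 0$, so $G$ is an isometry on $\ranclosure(A-P^*P)$. Writing the polar decomposition $R = U C^{1/2}$, where $U$ is a partial isometry with initial space $\ranclosure C$ and final space $\ranclosure R$, I set $X = U G (A-P^*P)^{1/2}$. The identities $U^*UG = G$ (because $\ran G \subseteq \ranclosure C$) and $G^*G = I$ on $\ran(A-P^*P)^{1/2}$ then deliver $R^*X = B$, $X^*X = A - P^*P$, and $\ran X \subseteq \ran U = \ranclosure R$ by direct computation, and block expansion recovers the factorization of $M$.

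Uniqueness in $(1)$ is then immediate: $R^*$ is injective on $\ranclosure R$ because $\ker R^* = (\ran R)^\perp$, so any two candidates $X_1, X_2$ satisfy $R^*(X_1 - X_2) = 0$ with $\ran(X_1-X_2) \subseteq \ranclosure R$, forcing $X_1 = X_2$. For $(2)$, the same block expansion shows
\[
M - P_\eK^* P^*P P_\eK = \begin{pmatrix} X^* \\ R^* \end{pmatrix} \begin{pmatrix} X & R \end{pmatrix} \ge 0,
\]
which is property (i) in Definition \ref{nov4e}; for the maximality clause (ii), any competing $\widetilde S \ge 0$ with $M - P_\eK^* \widetilde S P_\eK \ge 0$ satisfies, for every $f \in \eK$ and $g \in \eK^\perp$,
\[
\langle (P^*P - \widetilde S) f, f\rangle + \| X f + R g \|^2 \ge 0,
\]
and since $-Xf \in \ranclosure R$, I can choose $g_n$ with $Rg_n \to -Xf$, drive the second term to zero, and conclude $\widetilde S \le P^*P$.

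The main obstacle is the upgrade from the mere nonnegativity of $M - P_\eK^* P^*P P_\eK$ (which would only give $X^*X \le A - P^*P$ via Lemma \ref{sep27b}) to the isometry of $G$ on $\ranclosure(A - P^*P)$; this is where the \emph{maximality} clause in the definition of the Schur complement, not just its admissibility, is essential. A secondary subtlety is that the range condition $\ran X \subseteq \ranclosure R$ is genuine rather than decorative: it is what pins $X$ down in Part $(1)$, since an unconstrained solution of $R^*X = B$ could carry an arbitrary summand in $\ker R^* = (\ranclosure R)^\perp$, and it is precisely what powers the approximation argument used for maximality in Part $(2)$.
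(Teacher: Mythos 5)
The paper itself omits the proof of Lemma~\ref{lemma2}, simply referring to \cite{BWbook} and \cite{DW2005}, so there is no in-paper argument to compare against. Your proof, however, is correct, self-contained, and builds naturally on the paper's Appendix lemmas. The key steps all check out: the block expansion correctly reduces the factorization to $A = P^*P + X^*X$, $B=R^*X$, $C=R^*R$; the observation that maximality of $P^*P$ forces the Schur complement of $M - P_\eK^*P^*PP_\eK$ supported on $\eK$ to vanish is exactly the lever needed to upgrade the contraction $G$ from Lemma~\ref{sep27b} to an isometry on $\ranclosure(A-P^*P)$; the construction $X = UG(A-P^*P)^{1/2}$ via the polar decomposition $R=UC^{1/2}$ delivers all three identities and lands $\ran X$ inside $\ranclosure R$; uniqueness follows cleanly from the injectivity of $R^*$ on $\ranclosure R = (\ker R^*)^\perp$; and the approximation argument in part~(2), where you choose $g_n$ with $Rg_n\to -Xf$ to kill $\|Xf+Rg\|^2$, correctly exploits the range condition to establish maximality. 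Your closing commentary correctly identifies the two load-bearing ideas: that one must use the \emph{maximality} clause of Definition~\ref{nov4e} (mere admissibility gives only $X^*X\le A-P^*P$), and that the range condition $\ran X\subseteq\ranclosure R$ is what pins $X$ down uniquely and simultaneously powers the approximation step in the converse.

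One small presentational remark: your existence argument tacitly uses that for $C\ge 0$, $\ranclosure C^{1/2}=\ranclosure C$, so that the initial space of the partial isometry $U$ in the polar decomposition of $R$ is indeed $\ranclosure C$ (needed to conclude $U^*UG=G$); this is standard but worth flagging since $\ran G \subseteq \ranclosure C$ is the precise property supplied by Lemma~\ref{sep27b}.
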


We omit the proof and refer the reader to \cite{BWbook} or
\cite{DW2005} for details.

\goodbreak
  \begin{proof}[Proof of Theorem $\ref{ofr}$ using Schur complements]
    Let $Q(z) = \sum_{k=-m}^m Q_k z^k$ satisfy $Q(\zeta) \ge 0$ for all
    $\zeta\in\thecircle$.  We shall recursively construct the coefficients
    of an outer polynomial $P(z) = P_0 + P_1 z + \cdots + P_m z^m$
    such that $Q(\zeta) = P(\zeta)^*P(\zeta)$, $\zeta\in\thecircle$.

    Write $\eH = \eG \oplus \eG \oplus \cdots$ and $\eG^n = \eG \oplus
    \cdots \oplus \eG$ with $n$ summands.  As before, set $Q_k=0$ for
    $|k|>m$, and define $T_Q \in\lh$ by
    \begin{equation*}
      T_Q = \begin{pmatrix}
    Q_0 & Q_{-1} & Q_{-2} & \cdots\\
    Q_1 & Q_0    & Q_{-1} & \ddots \\
    Q_2 & Q_1  & Q_0 & \ddots \\
    \vdots    & \ddots  & \ddots & \ddots
  \end{pmatrix} .
    \end{equation*}
    For each $k=0,1,2,\dots$, define
    \begin{equation*}
      S(k) = S(T_Q,\eG^{k+1}),
    \end{equation*}
    which we interpret as the Schur complement of $T_Q$ on the first
    $k+1$ summands of $\eH = \eG \oplus \eG \oplus \cdots$.  Thus
    $S(k)$ is a $(k+1)\times (k+1)$ block operator matrix satisfying
    \begin{equation}\label{nov3a}
      S(S(k),\eG^{j+1}) = S(j), \quad 0 \le j < k < \infty
    \end{equation}
    by the inheritance property of Schur complements.

    \begin{lemma}\label{lemma3}
      For each $k=0,1,2,\dots$,
      \begin{equation*}
        S(k+1) = 
        \begin{pmatrix}
          Y_0 & 
              \begin{pmatrix} Y_1 & \cdots & Y_{k+1} \end{pmatrix} \\[3pt]
          \begin{pmatrix} Y_1^* \\ \vdots \\ Y_{k+1}^* \end{pmatrix} 
                & S(k)
        \end{pmatrix}
      \end{equation*}
      for some operators $Y_0,Y_1,\dots,Y_{k+1}$ in $\lg$.  For $k \ge
      m-1$,
      \begin{equation*}
        \begin{pmatrix} Y_0 & Y_1 & \cdots & Y_{k+1} \end{pmatrix}
        =
        \begin{pmatrix} Q_0 & Q_{-1} & 
                       \cdots & Q_{-k-1} \end{pmatrix} .        
      \end{equation*}
    \end{lemma}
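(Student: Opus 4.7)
The plan is to extract both assertions from the Toeplitz invariance $\Sigma^*T_Q\Sigma = T_Q$, where $\Sigma$ denotes the forward shift on $\eH = \eG\oplus\eG\oplus\cdots$.  The key tool is the variational description
\begin{equation*}
  \langle S(T,\eK)x,x\rangle \;=\; \inf_{z\in\eK^\perp} \langle T(x-z),x-z\rangle, \qquad x\in\eK,
\end{equation*}
which follows directly from the factorization $S(T,\eK) = A^{1/2}(I - G^*G)A^{1/2}$ with $B = C^{1/2}GA^{1/2}$ from Appendix~\ref{S:complement}: a short expansion gives $\langle T(x-z),x-z\rangle - \langle S(T,\eK)x,x\rangle = \|GA^{1/2}x - C^{1/2}z\|^2$, and this can be made arbitrarily small by choosing $z\in\eK^\perp$ because $GA^{1/2}x$ lies in $\ranclosure C = \overline{\ran C^{1/2}}$.

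For the block structure, I would fix $y\in\eG^{k+1}$ and apply the variational formula to $S(k+1) = S(T_Q,\eG^{k+2})$ at the point $\Sigma y$, which lies in the last $k+1$ summands of $\eG^{k+2}$ and hence has zero component in the first summand.  The substitution $z = \Sigma z'$ gives a unitary bijection between $(\eG^{k+1})^\perp$ (summands $k+1,k+2,\ldots$) and $(\eG^{k+2})^\perp$ (summands $k+2,k+3,\ldots$), and $\Sigma^*T_Q\Sigma = T_Q$ collapses $\langle T_Q\Sigma(y-z'),\Sigma(y-z')\rangle$ to $\langle T_Q(y-z'),y-z'\rangle$.  Taking the infimum gives
\begin{equation*}
  \langle S(k+1)\Sigma y,\Sigma y\rangle \;=\; \inf_{z'\in(\eG^{k+1})^\perp}\langle T_Q(y-z'),y-z'\rangle \;=\; \langle S(k)y,y\rangle.
\end{equation*}
Because $\Sigma y$ has no component in the first summand of $\eG^{k+2}$, the left-hand side equals $\langle M\Sigma y,\Sigma y\rangle$, where $M$ is the lower-right $(k+1)\times(k+1)$ block of $S(k+1)$.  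Polarization then identifies $M$ with the $\Sigma$-transplant of $S(k)$ to the last $k+1$ summands of $\eG^{k+2}$, which under the usual block-matrix identification is $S(k)$ itself.  This produces the first assertion, with some unspecified operators $Y_0,Y_1,\ldots,Y_{k+1}$ filling the new first row and column.

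For the explicit identification when $k\ge m-1$, I would work with the factorization $S(k+1) = A - A^{1/2}G^*GA^{1/2}$ directly, writing $T_Q = \begin{pmatrix} A & B^* \\ B & D \end{pmatrix}$ on $\eG^{k+2}\oplus(\eG^{k+2})^\perp$ and $B = D^{1/2}GA^{1/2}$ with $G$ vanishing on $(\ranclosure A)^\perp$ and with range in $\ranclosure D$.  The entries in the first column of $B$ are $Q_{i'+k+2}$ for $i'\ge 0$; all of these vanish when $k\ge m-1$, so $BE_0 = 0$, where $E_0\colon\eG\to\eG^{k+2}$ is the embedding onto the first summand.  Combining $D^{1/2}GA^{1/2}E_0 = 0$ with $GA^{1/2}E_0\in\ranclosure D = (\ker D^{1/2})^\perp$ forces $GA^{1/2}E_0 = 0$, which makes the first row and column of $A^{1/2}G^*GA^{1/2}$ vanish.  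The first row of $S(k+1)$ thus coincides with the first row of $A$, which by the Toeplitz form of $A$ is $(Q_0,Q_{-1},\ldots,Q_{-(k+1)})$.

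The main technical issue will be the careful handling of the possibly non-invertible $D$; this is precisely why the factorization through the contraction $G$, rather than the classical formula $S = A - B^*D^{-1}B$, is needed.  Everything else is a matter of tracking the shift $\Sigma$ and exploiting the band structure of $Q$, both of which are straightforward.
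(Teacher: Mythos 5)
The paper does not actually prove Lemma~\ref{lemma3}; it sends the reader to \cite{DW2005} and \cite{BWbook}, so there is no internal argument to compare with. Your proof is correct as a self-contained argument, and both halves are sound.

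For the block structure you use the variational identity $\langle S(T,\eK)x,x\rangle = \inf_{z\in\eK^\perp}\langle T(x-z),x-z\rangle$, which you derive directly from the factorization $S = A^{1/2}(I-G^*G)A^{1/2}$ and the fact that $GA^{1/2}x$ lies in $\ranclosure C = \overline{\ran C^{1/2}}$; the expansion $\langle T(x-z),x-z\rangle = \langle Sx,x\rangle + \|GA^{1/2}x - C^{1/2}z\|^2$ checks out. Combining that with the shift-invariance $\Sigma^*T_Q\Sigma = T_Q$ and the observation that $\Sigma$ carries $(\eG^{k+1})^\perp$ isometrically onto $(\eG^{k+2})^\perp$ gives $\langle S(k+1)\Sigma y,\Sigma y\rangle = \langle S(k)y,y\rangle$ for $y\in\eG^{k+1}$, and since $\Sigma y$ has zero first block, polarization forces the lower-right $(k+1)\times(k+1)$ corner of $S(k+1)$ to be $S(k)$. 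That $S(k+1)$ is selfadjoint then fixes the first row and column in the displayed form.

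For the second half you correctly identify that when $k\ge m-1$ the first column of the off-diagonal block $B$ of $T_Q$ (with respect to $\eG^{k+2}\oplus(\eG^{k+2})^\perp$) consists of the entries $Q_{k+2},Q_{k+3},\ldots$, all of which vanish, whence $D^{1/2}GA^{1/2}E_0 = 0$; since $G$ is normalized to have range in $\ranclosure D = (\ker D^{1/2})^\perp$ this forces $GA^{1/2}E_0 = 0$, so the first row of the correction term $A^{1/2}G^*GA^{1/2}$ vanishes and the first row of $S(k+1)$ agrees with the first row $(Q_0,Q_{-1},\ldots,Q_{-k-1})$ of $A$. Your caution about the possible non-invertibility of $D$, and the corresponding need to route the argument through the normalized contraction $G$ rather than the formula $A - B^*D^{-1}B$, is well placed: it is exactly the point that makes the degenerate case work. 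This is a complete and efficient proof of the lemma, arguably cleaner than working directly with the finite-truncation formulas that appear in the cited sources.
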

    
    Again see \cite{BWbook} or \cite{DW2005} for details.  Granting
    Lemmas \ref{lemma2} and \ref{lemma3}, we can proceed with the
    construction.  \smallskip

    \noindent {\bf Construction of $P_0, P_1$.}  Choose $P_0 =
    S(0)^\frac12$.  Using Lemma \ref{lemma3}, write
    \begin{equation*}
      S(1) = 
      \begin{pmatrix}
        Y_0 & Y_1 \\ Y_1^* & S(0) 
      \end{pmatrix} .
    \end{equation*}
    In Lemma \ref{lemma2}(1) take $M = S(1)$ and use the
    factorizations
    \begin{equation*}
      S(S(1),\eG^1) \stackrel{\eqref{nov3a}}{=} S(0) = P_0^*P_0 
      \qquad\text{and}\qquad
      S(0) = P_0^*P_0 \,.
    \end{equation*}
    Choose $P_1=X$ where $X\in\lg$ is the operator produced by Lemma
    \ref{lemma2}(1).  Then
    \begin{equation}
      \label{oct10c}
        S(1) = 
      \begin{pmatrix}
        P_0^*&P_1^* \\ 0&P_0^* 
      \end{pmatrix} 
      \begin{pmatrix}
        P_0&0 \\ P_1&P_0
      \end{pmatrix}
\qquad\text{and}\qquad
       \ran P_1 \subseteq \ranclosure P_0 .
    \end{equation}

\smallskip

    \noindent {\bf Construction of $P_2$.}  Next use Lemma
    \ref{lemma3} to write
    \begin{equation*}
      S(2) = 
      \begin{pmatrix}
        Y_0 & 
        \begin{pmatrix}
          Y_1&Y_2
        \end{pmatrix}  \\[3pt]
\begin{pmatrix}
  Y_1^* \\ Y_2^*
\end{pmatrix} & S(1) 
      \end{pmatrix} ,
    \end{equation*}
    and apply Lemma \ref{lemma2}(1) to $M = S(2)$ with the factorizations
    \begin{gather*}
      S(S(2),\eG^1) \stackrel{\eqref{nov3a}}{=} S(0) = P_0^*P_0, \\
S(1) =  \begin{pmatrix}
        P_0^*&P_1^* \\ 0&P_0^* 
      \end{pmatrix} 
      \begin{pmatrix}
        P_0&0 \\ P_1&P_0
      \end{pmatrix} .
    \end{gather*}
    This yields operators $X_1,X_2\in\lg$ such that
    \begin{gather}\label{oct10d}
      S(2) = 
        \begin{pmatrix}
          P_0^*&X_1^*&X_2^* \\
          0    &P_0^*&P_1^* \\
          0    & 0   &P_0^*
        \end{pmatrix}
        \begin{pmatrix}
          P_0&0&0 \\
          X_1 &P_0 &0 \\
          X_2& P_1&P_0
        \end{pmatrix} , \\
     \ran 
     \begin{pmatrix}
       X_1 \\ X_2
     \end{pmatrix} \subseteq 
     \ranclosure \begin{pmatrix}
       P_0&0 \\ P_1&P_0
     \end{pmatrix} .  \label{oct10e}
    \end{gather}
    In fact, $X_1=P_1$.  To see this, notice that we can rewrite
    \eqref{oct10d} as
  \begin{gather*}
    S(2) = 
    \begin{pmatrix}
      {\widetilde P}^* & {\widetilde X}^* \\ 0 & {\widetilde R}^*
    \end{pmatrix}
    \begin{pmatrix}
      \widetilde P & 0 \\ \widetilde X & \widetilde R
    \end{pmatrix} , \\
\widetilde P = \begin{pmatrix} P_0 & 0 \\ X_1 & P_0 \end{pmatrix},
\qquad
\widetilde X = \begin{pmatrix} X_2 & P_1 \end{pmatrix},
\qquad
\widetilde R = P_0.
  \end{gather*}
  By \eqref{oct10c} and \eqref{oct10e}, $\ran P_1 \subseteq
  \ranclosure P_0$ and $\ran X_2 \subseteq \ranclosure P_0$, and
  therefore $\ran \widetilde X \subseteq \ranclosure P_0$.  Hence by
  Lemma \ref{lemma2}(2), 
  \begin{equation}
    \label{nov2b}
    S(S(2),\eG^2) 
         = {\widetilde P}^* \widetilde P 
         = 
         \begin{pmatrix}
           P_0^* & X_1^* \\ 0 & P_0^*
         \end{pmatrix}
         \begin{pmatrix}
           P_0 & 0 \\ X_1 & P_0 .
         \end{pmatrix} .
  \end{equation}
  Comparing this with
  \begin{equation}\label{nov2a}
    S(S(2),\eG^2)\stackrel{\eqref{nov3a}}{=}  S(1)
       \stackrel{\eqref{oct10c}}{=}
\begin{pmatrix}
        P_0^*&P_1^* \\ 0&P_0^* 
      \end{pmatrix} 
      \begin{pmatrix}
        P_0&0 \\ P_1&P_0
      \end{pmatrix} ,
  \end{equation}
  we get $P_0^*P_1 = P_0^*X_1$.  By \eqref{oct10e}, $\ran X_1
  \subseteq \ranclosure P_0$, and therefore $X_1 = P_1$.  Now choose
  $P_2 = X_2$ to obtain
    \begin{gather}\label{oct10dd}
      S(2) = 
        \begin{pmatrix}
          P_0^*&P_1^*&P_2^* \\
          0    &P_0^*&P_1^* \\
          0    & 0   &P_0^*
        \end{pmatrix}
        \begin{pmatrix}
          P_0&0&0 \\
          P_1 &P_0 &0 \\
          P_2& P_1&P_0
        \end{pmatrix} , \\
     \ran 
     \begin{pmatrix}
       P_1 \\ P_2
     \end{pmatrix} \subseteq 
     \ranclosure \begin{pmatrix}
       P_0&0 \\ P_1&P_0
     \end{pmatrix} .  \label{oct10ee}
    \end{gather}

\noindent {\bf Inductive step.}
We continue in the same way for all $k=1,2,3,\dots$.  At the $k$-th
stage, the procedure produces operators $P_0,\dots,P_k$ such that 
\begin{gather}
  S(k) = 
  \begin{pmatrix}
    P_0^* & \cdots & P_k^*  \\
          & \ddots & \vdots \\
      0    && P_0^*
  \end{pmatrix}
  \begin{pmatrix}
    P_0 && 0 \\
    \vdots & \ddots & \\
    P_k & \cdots & P_0
  \end{pmatrix} ,
\label{nov4a}
\\
\ran 
\begin{pmatrix}
  P_1 \\ \vdots \\ P_k
\end{pmatrix}
\subseteq   \ranclosure
   \begin{pmatrix}
    P_0 && 0 \\
    \vdots & \ddots & \\
    P_k & \cdots & P_0
  \end{pmatrix} .
\label{nov4b}
\end{gather}
By Lemma \ref{lemma3}, in the case $k \ge m$, 
\begin{equation}\label{nov4c}
  S(k) = 
  \begin{pmatrix}
    Q_0 & \begin{pmatrix} Q_{-1} & \cdots & Q_{-m} 
                    &  0& \cdots &0 \end{pmatrix} \\[3pt]
    \begin{pmatrix} Q_{-1}^* \\ \cdots \\ Q_{-m}^* 
                    \\  0\\ \vdots \\ 0 \end{pmatrix}
  &S(k-1)
  \end{pmatrix} .
\end{equation}
The zeros appear here when $k>m$, and their presence leads to the
conclusion that $P_k = 0$ for $k>m$.  We set then
\begin{equation*}
  P(z) = P_0 + P_1 z + \cdots + P_m z^m .
\end{equation*}
Comparing \eqref{nov4a} and \eqref{nov4c} in the case $k=m$, we deduce
$2m+1$ relations which are equivalent to the identity
\begin{equation*}
  Q(\zeta) = P(\zeta)^*P(\zeta), \qquad \zeta \in\thecircle .
\end{equation*}

\noindent {\bf Final step: $P(z)$ is outer.}
Define $T_P$ as in \eqref{aug31a}.  With natural identifications,
\begin{equation}
  \label{nov4d}
  T_P = 
  \begin{pmatrix}
    P_0 & 
    \begin{pmatrix}0&0&\cdots&  \end{pmatrix}\, \\[3pt]
    \begin{pmatrix} P_1 \\ P_2 \\ \vdots \\  \end{pmatrix}
           & T_P
  \end{pmatrix} .
\end{equation}
The relations \eqref{nov4b}, combined with the fact that $P_k=0$ for
all $k>m$, imply that 
\begin{equation*}
  \ran \begin{pmatrix} P_1 \\ P_2 \\ \vdots \\  \end{pmatrix}
\subseteq
  \ranclosure\, T_P .
\end{equation*}
Hence for any $g\in\eG$, a sequence $f_n$ can be found such that 
\begin{equation*}
  T_P f_n \to \begin{pmatrix} P_1 \\ P_2 \\ \vdots \\  \end{pmatrix}
  g.
\end{equation*}
Then by \eqref{nov4d},
\begin{equation*}
  T_P 
  \begin{pmatrix}
    g \\ f_n
  \end{pmatrix} \to 
  \begin{pmatrix}
    P_0 \\ 0
  \end{pmatrix} g .
\end{equation*}
It follows that $\ranclosure\, T_P$ contains every vector $(P_0
g,0,0,\dots )$ with $g\in\lg$, and hence $\ranclosure\, T_P \supseteq
\ranclosure P_0 \oplus \ranclosure P_0 \oplus \cdots$.  The reverse
inclusion holds because by \eqref{nov4b}, the ranges of
$P_1,P_2,\dots$ are all contained in $\ranclosure P_0$.  Thus $P(z)$
is outer.
  \end{proof}

\goodbreak
\section{Spectral factorization}\label{S:spectral}

The problem of spectral factorization is to write a nonnegative
operator-valued function $F$ on the unit circle in the form $F = G^*G$
where $G$ is analytic (in a sense made precise below).  The
terminology comes from prediction theory, where the nonnegative
function $F$ plays the role of a spectral density for a
multidimensional stationary stochastic process.
The problem may be viewed as a generalization of a classical theorem
of Szeg{\H o} from Hardy class theory and the theory of orthogonal
polynomials (see Hoffman \cite[p.\ 56]{hoffman} and Szeg{\H o}
\cite[Chapter X]{szego}).

We write $H^p$ and $L^p$ for the standard Hardy and Lebesgue spaces
for the unit disk and unit circle.  See Duren \cite{duren1970}.
Recall that $\sigma$ is normalized Lebesgue measure on the unit
circle~$\thecircle$.

\begin{theorem}[Szeg{\H o}'s Theorem]\label{nov23d}
  Let $w \in L^1$ satisfy $w \ge 0$ a.e.\ on $\thecircle$ and
  \begin{equation*}
    \int_{\thecircle} \log w(\zeta)\; d\sigma >-\infty .
  \end{equation*}
  Then $w = |h|^2$ a.e.\ on $\thecircle$ for some $h \in H^2$, and $h$
  can be chosen to be an outer function.
\end{theorem}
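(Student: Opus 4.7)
The plan is to construct $h$ explicitly via the harmonic extension of $\tfrac12 \log w$ into $\bbD$, which is the classical approach of Szeg{\H o}; the log-integrability hypothesis is precisely what makes this construction produce a function in $H^2$.

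First, note that $\log w \in L^1(\sigma)$: the lower bound on $\int \log w\, d\sigma$ is hypothesized, and $\log^+ w \le w \in L^1$ controls the positive part. Form the harmonic function on $\bbD$,
\begin{equation*}
u(z) = \int_{\thecircle} P_z(\zeta)\, \log w(\zeta)\, d\sigma(\zeta),
\end{equation*}
whose nontangential boundary values equal $\log w$ almost everywhere on $\thecircle$. Let $v$ be its harmonic conjugate normalized by $v(0)=0$, and set
\begin{equation*}
h(z) = \exp\bigl(\tfrac12 (u(z) + i v(z))\bigr), \qquad z \in \bbD.
\end{equation*}
Then $h$ is holomorphic and nonvanishing on $\bbD$, with $|h(z)|^2 = e^{u(z)}$.

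The main obstacle is verifying $h \in H^2$, and this is handled by Jensen's inequality applied to the convex function $\exp$ and the probability measure $P_r(\theta - t)\, d\sigma(t)$. Writing $z = re^{i\theta}$,
\begin{equation*}
|h(re^{i\theta})|^2 = \exp\!\left(\int_{\thecircle} P_r(\theta - t) \log w(\eit)\, d\sigma(t)\right) \le \int_{\thecircle} P_r(\theta - t)\, w(\eit)\, d\sigma(t).
\end{equation*}
Integrating over $\theta$ and swapping order of integration via Fubini, together with $\int_{\thecircle} P_r(\theta - t)\, d\sigma(\theta) = 1$, yields $\int_{\thecircle} |h(re^{i\theta})|^2\, d\sigma(\theta) \le \|w\|_{L^1}$ uniformly in $r < 1$; hence $h \in H^2$ with $\|h\|_{H^2}^2 \le \|w\|_{L^1}$.

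Boundary identification and outerness are then routine. Since $u(re^{i\theta})$ converges nontangentially to $\log w(\eit)$ almost everywhere, the boundary function of $h$ satisfies $|h|^2 = e^{\log w} = w$ a.e.\ on $\thecircle$. Finally, $\log|h(z)| = \tfrac12\, u(z)$ is by construction the Poisson integral of its own boundary values, which is one standard characterization of an outer $H^2$ function.
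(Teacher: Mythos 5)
Your argument is correct, and it is the classical proof of Szeg{\H o}'s theorem: pass to the Poisson integral $u$ of $\log w$, take a harmonic conjugate $v$ with $v(0)=0$, set $h=\exp(\tfrac12(u+iv))$, control $\|h_r\|_{L^2}$ uniformly in $r$ by Jensen's inequality applied to $\exp$ against the Poisson-kernel probability measure, and read off outerness from the fact that $\log|h|$ equals the Poisson integral of its boundary values. All the steps are sound, including the preliminary observation that $\log^+ w \le w$ puts $\log w$ in $L^1$.

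The paper itself does \emph{not} prove Theorem~\ref{nov23d}; it states it as classical background and cites Hoffman and Szeg{\H o}. So there is no in-paper proof of the scalar statement to compare against. What the paper does prove is the operator generalization, Theorem~\ref{nov23c}, and it does so by an entirely different route: it reduces factorability to the shift-operator criterion of Lemma~\ref{nov15a} (Lowdenslager) and then applies the Matsaev--Rosenblum--type sufficient condition of Theorem~\ref{nov20c} with $\psi = \varphi F_1^{-1}$. That approach never forms a pointwise logarithm or harmonic extension --- which is exactly why it survives the passage to operator-valued $F$, where $\exp$ of a Poisson integral of $\log F$ is unavailable (noncommutativity of values at different points). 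Your construction is more explicit and hands you the outer $h$ in closed form, but it is intrinsically scalar (or at best diagonalizable); the paper's method sacrifices that explicitness to gain generality. Both are legitimate, and your proof is the right one for the scalar statement as posed.
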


Operator and matrix generalizations of Szeg{\H o}'s theorem are stated
in Theorems \ref{nov23c} and \ref{nov23cc} below.  Some vectorial
function theory is needed to formulate these and other results.  We
assume familiarity with basic concepts but recall a few definitions.
For details, see e.g.\ \cite{Helson1964, NFbook}) and \cite[Chapter
4]{RRbook}.

In this section, $\eG$ denotes a separable Hilbert space.  Functions
$f$ and $F$ on the unit circle with values in $\eG$ and $\lg$,
respectively, are called weakly measurable if $\ip{f(\zeta)}{v}$ and
$\ip{F(\zeta)u}{v}$ are measurable for all $u,v \in\eG$.
Nontangential limits for analytic functions on the unit disk are taken
in the strong (norm) topology for vector-valued functions, and in the
strong operator topology for operator-valued functions.  We fix
notation as follows:
\begin{enumerate}
\item[(i)] We write \smash{$L^2_\eG$} and
\smash{$L^\infty_{\lg}$} for the standard Lebesgue spaces
of weakly measurable functions on the unit circle with values in $\eG$
and $\lg$.
\smallskip

\item[(ii)] Let $H^2_\eG$ and $H^\infty_{\lg}$ be the analogous 
Hardy classes of analytic functions on the unit disk.  We identify
elements of these spaces with their nontangential boundary functions,
and so the spaces may alternatively be viewed as subspaces of
\smash{$L^2_\eG$} and
\smash{$L^\infty_{\lg}$}. 
\smallskip

\item[(iii)] Let \smash{$N^+_{\lg}$} be the space of all analytic
  functions $F$ on the unit disk such that $\varphi F$ belongs to
  $H^\infty_{\lg}$ for some bounded scalar outer function~$\varphi$.
  The elements of \smash{$N^+_{\lg}$} are also identified with their
  nontangential boundary functions.

\end{enumerate}

A function $F \in H^\infty_{\lg}$ is called {\bf outer} if $FH^2_\eG$
is dense in $H^2_\eF$ for some closed subspace $\eF$ of $\eG$.  A
function $F\in N^+_{\lg}$ is {\bf outer} if there is a bounded scalar
outer function $\varphi$ such that $\varphi F \in H^\infty_{\lg}$ and
$\varphi F$ is outer in the sense just defined.  The definition of an
outer function given here is consistent with the previously defined
notion for polynomials in \S\ref{S:FejerRiesz}.

A function $A \in H^\infty_{\lg}$ is called {\bf inner} if
multiplication by $A$ on $\eH^2_\eG$ is a partial isometry.  In this
case, the initial space of multiplication by $A$ is a subspace of
$\eH^2_\eG$ of the form $\eH^2_\eF$ where $\eF$ is a closed subspace
of $\eG$.  To prove this, notice that both the kernel of
multiplication by $A$ and the set on which it is isometric are
invariant under multiplication by $z$.  Therefore the initial space of
multiplication by $A$ is a reducing subspace for multiplication by
$z$, and so it has the form $\eH^2_\eF$ where $\eF$ is a closed
subspace of $\eG$ (see \cite[p.\ 106]{halmos1961} and \cite[p.\ 
p.96]{RRbook}).

Every $F \in H^\infty_{\lg}$ has an {\bf inner-outer factorization} $F
= AG$, where $A$ is an inner function and $G$ is an outer function.
This factorization can be chosen such that the isometric set $H^2_\eF$
for multiplication by $A$ on $H^2_\eG$ coincides with the closure of
the range of multiplication by $G$.  The inner-outer factorization is
extended in an obvious way to functions $F\in N^+_{\lg}$.  Details are
given, for example, in \cite[Chapter 5]{RRbook}.

The main problem of this section can now be interpreted more
precisely:
\smallskip

\noindent {\bf Factorization Problem.}  {\it Given a nonnegative 
  weakly measurable function $F$ on $\thecircle$, find a function $G$
  in $N^+_{\lg}$ such that $F = G^*G$ a.e.\ on $\thecircle$.  If such
  a function exists, we say that $F$ is {\bf factorable}}.  \smallskip

If a factorization exists, the factor $G$ can be chosen to be outer by
the inner-outer factorization.  Moreover, an outer factor $G$ can be
chosen such that $G(0) \ge 0$, and then it is unique \cite[p.\ 
101]{RRbook}.  By the definition of $N^+_{\lg}$, a necessary condition
for $F$ to be factorable is that
\begin{equation}\label{nov20a}
  \int_\thecircle \log^+\| F(\zeta) \| \; d\sigma < \infty,
\end{equation}
where $\log^+ x$ is zero or $\log x$ according as $0 \le x \le 1$ or
$1 < x < \infty$, and so we only need consider functions which satisfy
\eqref{nov20a}.  In fact, in proofs we can usually reduce to the
bounded case by considering $F/|\varphi|^2$ for a suitable scalar
outer function~$\varphi$.

The following result is another view of Lowdenslager's criterion,
which we deduce from Lemma \ref{sep22a}.  A direct proof is given in
\cite[pp.\ 201--203]{NFbook}.

\begin{lemma}\label{nov15a}
  Suppose \smash{$F\in L^\infty_{\lg}$} and $F \ge 0$ a.e.\ on
  $\thecircle$.  Let $\eK_F$ be the closure of $F^\frac12 H^2_\eG$ in
  $L^2_\eG$, and let $S_F$ be the isometry multiplication by $\zeta$
  on $\eK_F$.  Then $F$ is factorable if and only if $S_F$ is a shift
  operator, that is,
  \begin{equation}\label{nov15c}
    \bigcap_{n=0}^\infty \zeta^n\;\overline{ F^\frac12
             H^2_\eG} = \{0\} .
  \end{equation}
\end{lemma}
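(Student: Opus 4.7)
The plan is to translate the statement into Lemma~\ref{sep22a} by choosing the natural Toeplitz operator associated with $F$. Take $\eH = H^2_\eG$ with shift $S = M_\zeta$ (multiplication by $\zeta$), whose wandering subspace is the copy of $\eG$ consisting of constant functions. Let $T = P_+ M_F P_+ \in \lh$, where $M_F$ is multiplication by $F$ on $L^2_\eG$ and $P_+$ is orthogonal projection of $L^2_\eG$ onto $H^2_\eG$. Since $F \in L^\infty_{\lg}$ and $F \ge 0$ a.e., $T$ is a bounded nonnegative operator on $\eH$, and $S^*TS = T$, so the hypotheses of Lemma~\ref{sep22a} are satisfied.

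Next I would identify $\eH_T$ with $\eK_F$ via a unitary that intertwines $S_T$ and $S_F$. For every $h \in H^2_\eG$,
\begin{equation*}
  \|T^{1/2}h\|^2_\eH \;=\; \ip{Th}{h}_\eH \;=\; \int_\thecircle \ip{F(\zeta)h(\zeta)}{h(\zeta)}_\eG\,d\sigma(\zeta) \;=\; \|F^{1/2}h\|^2_{L^2_\eG},
\end{equation*}
so the assignment $T^{1/2}h \mapsto F^{1/2}h$ extends by continuity to a unitary $U\colon \eH_T \to \eK_F$. Since $U(S_T T^{1/2}h) = U T^{1/2}(\zeta h) = F^{1/2}(\zeta h) = \zeta F^{1/2} h = S_F(U T^{1/2}h)$, the isometry $S_T$ on $\eH_T$ is a shift if and only if $S_F$ on $\eK_F$ is a shift.

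Now I would translate the conclusion of Lemma~\ref{sep22a} into factorability of $F$. An analytic operator on $H^2_\eG$, i.e., one that commutes with $S = M_\zeta$, is exactly multiplication by some $G \in H^\infty_{\lg}$ (the standard identification of the commutant of the vector shift). Hence the existence of an analytic $A \in \lh$ with $T = A^*A$ is equivalent to the existence of $G \in H^\infty_{\lg} \subseteq N^+_{\lg}$ with $M_G^* M_G = P_+ M_F P_+$ on $H^2_\eG$. Testing this operator identity against vectors $\zeta^k u$ and $\zeta^j v$ with $u,v \in \eG$ and $j,k \ge 0$ shows that every Fourier coefficient of $\ip{Fu}{v} - \ip{G^*Gu}{v}$ vanishes, so $F = G^*G$ a.e.\ on $\thecircle$. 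Conversely, if $F = G^*G$ with $G \in N^+_{\lg}$, the bound $\|Gh\|^2_{L^2_\eG} \le \|F\|_\infty \|h\|^2$ forces $G \in L^\infty_{\lg}$, and hence $G \in H^\infty_{\lg}$ since $N^+_{\lg} \cap L^\infty_{\lg} = H^\infty_{\lg}$; then $A = M_G$ is analytic on $H^2_\eG$ with $A^*A = T$.

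It remains to rewrite ``$S_F$ is a shift'' as \eqref{nov15c}. An isometry $V$ on a Hilbert space $\eK$ is a shift precisely when its unitary part $\bigcap_{n \ge 0} V^n \eK$ is trivial; applied to $V = S_F$ with $S_F^n \eK_F = \zeta^n\,\overline{F^{1/2} H^2_\eG}$ this is exactly \eqref{nov15c}. The main obstacle, and the only step that is not formal bookkeeping, is the passage from the operator identity $M_G^* M_G = P_+ M_F P_+$ on $H^2_\eG$ to the pointwise equality $F = G^*G$ a.e.; everything else follows cleanly once the intertwining unitary $U \colon \eH_T \to \eK_F$ is in hand.
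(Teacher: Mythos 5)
Your proposal is correct and follows essentially the same route as the paper: both take $\eH = H^2_\eG$ with $S = M_\zeta$, define the Toeplitz operator $T = P_+ M_F|_{H^2_\eG}$, construct the intertwining unitary $T^{1/2}h \mapsto F^{1/2}h$ from $\eH_T$ to $\eK_F$, and invoke Lemma~\ref{sep22a}. The only difference is that where you spell out the equivalence between ``$T=A^*A$ with $A$ analytic'' and ``$F$ is factorable'' (via the commutant of $M_\zeta$, Fourier coefficient comparison, and $N^+_{\lg}\cap L^\infty_{\lg}=H^\infty_{\lg}$), the paper simply cites the reference for this standard fact.
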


\begin{proof}
  In Lemma \ref{sep22a} take $\eH = H^2_\eG$ viewed as a subspace of
  $L^2_\eG$, and let $S$ be multiplication by $\zeta$ on $\eH$.
  Define $T\in\lh$ by $Tf = PFf$, $f\in\eH$, where $P$ is the
  projection from $L^2_\eG$ onto $H^2_\eG$.  One sees easily that $T$
  is a nonnegative Toeplitz operator, and so we can define $\eH_T$ and
  an isometry $S_T$ as in Lemma \ref{sep22a}.  In fact, $S_T$ is
  unitarily equivalent to $S_F$ via the natural isomorphism $W \colon
  \eH_T \to \eK_F$ such that $W (T^\frac12 f) = F^\frac12 f$ for every
  $f$ in $\eH$.  Thus $S_F$ is a shift operator if and only if $S_T$
  is a shift operator, and by Lemma \ref{sep22a} this is the same as
  saying that $T = A^*A$ where $A \in\lh$ is analytic, or equivalently
  $F$ is factorable \cite[p.\ 110]{RRbook}.
\end{proof}

We obtain a very useful sufficient condition for factorability.

\begin{theorem}\label{nov20c}
  Suppose $F \in L^\infty_{\lg}$ and $F \ge 0$ a.e.  For $F$ to be
  factorable, it is sufficient that there exists a function $\psi$ in
  $L^\infty_{\lg}$ such that
  \begin{enumerate}
  \item[(i)] $\psi F \in H^\infty_{\lg}$;
\item[(ii)] for all $\zeta \in\thecircle$ except at most a set of
  measure zero, $\psi(\zeta) \big\vert \overline{F(\zeta)\eG}$ is
  one-to-one.
  \end{enumerate}
  If these conditions are met and $F=G^*G$ a.e.\ with $G$ outer,
  then $\psi G^* \in H^\infty_{\lg}$.
\end{theorem}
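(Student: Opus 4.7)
The plan is to use Lowdenslager's criterion, Lemma~\ref{nov15a}, to reduce factorability to showing that
\begin{equation*}
\eM := \bigcap_{n=0}^\infty \zeta^n\,\overline{F^{1/2}H^2_\eG}
\end{equation*}
is trivial, and then to exploit hypothesis (i) to produce an analytic object from an arbitrary $f\in\eM$, and hypothesis (ii) to force $f=0$.

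The central observation will be that multiplication by $\psi F^{1/2}$ sends $\overline{F^{1/2}H^2_\eG}$ into $H^2_\eG$. Indeed, $\psi F^{1/2}\in L^\infty_{\lg}$, and for any $h\in H^2_\eG$ the product $(\psi F^{1/2})(F^{1/2}h)=(\psi F)h$ lies in $H^2_\eG$ by (i). Since $H^2_\eG$ is closed in $L^2_\eG$ and multiplication by $\psi F^{1/2}$ is bounded on $L^2_\eG$, the claim follows by continuity. Now fix $f\in\eM$. For every $n\ge 0$ we have $\bar\zeta^n f\in\overline{F^{1/2}H^2_\eG}$, so $\bar\zeta^n(\psi F^{1/2}f)=\psi F^{1/2}(\bar\zeta^n f)\in H^2_\eG$. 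Since $\bigcap_n \bar\zeta^n H^2_\eG=\{0\}$, we conclude $\psi F^{1/2}f=0$ a.e.

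Next I would invoke (ii) pointwise. Passing to an a.e.\ convergent subsequence of any approximants $F^{1/2}h_k\to f$, one sees $f(\zeta)\in\overline{F(\zeta)^{1/2}\eG}=\overline{F(\zeta)\eG}$ a.e. In particular $F(\zeta)^{1/2}f(\zeta)\in\overline{F(\zeta)\eG}$, and $\psi(\zeta)$ is one-to-one on this subspace by (ii), so $F(\zeta)^{1/2}f(\zeta)=0$ a.e. But $f(\zeta)\in\overline{F(\zeta)\eG}=(\ker F(\zeta)^{1/2})^\perp$, hence $f(\zeta)=0$ a.e.\ and $\eM=\{0\}$; by Lemma~\ref{nov15a}, $F$ is factorable. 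The delicate point is that injectivity of $\psi(\zeta)$ is only assumed on $\overline{F(\zeta)\eG}$, so one must verify that $F^{1/2}f$ takes values there and that $f$ itself lies in the orthogonal complement of $\ker F^{1/2}$ — both of these follow from $f\in\overline{F^{1/2}H^2_\eG}$ via a.e.\ convergence.

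For the last assertion, suppose $F=G^*G$ a.e.\ with $G\in N^+_{\lg}$ outer. The pointwise bound $\|G(\zeta)\|^2=\|F(\zeta)\|\le\|F\|_\infty$ gives $G\in L^\infty_{\lg}$, and since $N^+_{\lg}\cap L^\infty_{\lg}=H^\infty_{\lg}$, in fact $G\in H^\infty_{\lg}$. Thus $\psi G^*\in L^\infty_{\lg}$. Outerness furnishes a closed subspace $\eF\subseteq\eG$ with $\overline{GH^2_\eG}=H^2_\eF$, forcing $G(\zeta)\eG\subseteq\eF$ and hence $\psi(\zeta) G(\zeta)^*v=0$ for $v\in\eF^\perp$. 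For $v\in\eF$, the hypothesis $\psi F\in H^\infty_{\lg}$ yields $(\psi G^*)(Gh)=(\psi F)h\in H^2_\eG$ for all $h\in H^2_\eG$, and by continuity of multiplication by the bounded function $\psi G^*$, this extends to $(\psi G^*)k\in H^2_\eG$ for every $k\in H^2_\eF$, in particular for constants $k\equiv v\in\eF$. Combining, $(\psi G^*)v\in H^2_\eG$ for every $v\in\eG$, which together with $\psi G^*\in L^\infty_{\lg}$ implies $\psi G^*\in H^\infty_{\lg}$.
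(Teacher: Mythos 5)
Your proposal follows essentially the same line as the paper's proof: reduce to Lowdenslager's criterion via Lemma~\ref{nov15a}, use~(i) to show $\psi F^{1/2}$ annihilates $\eM$, use~(ii) together with $f(\zeta)\in\overline{F(\zeta)^{1/2}\eG}$ a.e.\ to conclude $f=0$, and for the last assertion split $H^2_\eG = H^2_\eF\oplus H^2_{\eG\ominus\eF}$ and treat the two pieces separately. One small slip: the line ``Since $\bigcap_n\bar\zeta^n H^2_\eG=\{0\}$'' is false as written --- that intersection is all of $H^2_\eG$ since $\bar\zeta^n H^2_\eG\supseteq H^2_\eG$. What you established is $\bar\zeta^n(\psi F^{1/2}f)\in H^2_\eG$ for all $n$, i.e.\ $\psi F^{1/2}f\in\bigcap_n\zeta^n H^2_\eG=\{0\}$; with that correction the argument is right and matches the paper.
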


Theorem \ref{nov20c} appears in Rosenblum \cite{Rosenblum1968} with
$\psi(\zeta)=\zeta^m$ (viewed as an operator-valued function).  The
case of an arbitrary inner function was proved and applied in a
variety of ways by Rosenblum and Rovnyak \cite{RRbook, RR1971}.
V.~I.\ Matsaev first showed that more general functions $\psi$ can be
used.  Matsaev's result is evidently unpublished, but versions were
given by D.~Z.\ Arov \cite[Lemma to Theorem 4]{Arov1979} and A.~S.\ 
Markus \cite[Theorem 34.3 on p.\ 199]{Markus}.  Theorem \ref{nov20c}
includes all of these versions.

We do not know if the conditions (i) and (ii) in Theorem \ref{nov20c}
are necessary for factorability.  It is not hard to see that they are
necessary in the simple cases $\dim \eG =1$ and $\dim \eG =2$ (for the
latter case, one can use \cite[Example~1, p.\ 125]{RRbook}).  The
general case, however, is open.

\begin{proof}[Proof of Theorem $\ref{nov20c}$]
  Let $F$ satisfy (i) and (ii).  Define a subspace $\eM$ of
  $L^2_\eG$ by
  \begin{equation*}
    \eM = \bigcap_{n=0}^\infty \zeta^n\;\overline{ F^\frac12
             H^2_\eG}
        = \bigcap_{n=0}^\infty \overline{\zeta^n F^\frac12
    H^2_\eG} .
  \end{equation*}
  We show that $\eM = \{0\}$.  By (i),
  \begin{equation}\label{jan22a}
    \psi F^\frac12 \eM 
= 
     \psi F^\frac12 \bigcap_{n=0}^\infty 
                         \overline{\zeta^n F^\frac12 H^2_\eG}
\subseteq
   \bigcap_{n=0}^\infty \overline{\zeta^n \psi F H^2_\eG}
\subseteq
         \bigcap_{n=0}^\infty \zeta^n H^2_\eG
=
     \{0\} .
  \end{equation}
  Thus $\psi F^\frac12 \eM = \{0\}$.  Now if $g \in \eM$, then $\psi
  F^\frac12 g = 0$ a.e.\ by \eqref{jan22a}.  Hence $F^\frac12 g = 0$
  a.e.\ by (ii).  By the definition of $\eM$, $g \in
  \overline{F^\frac12 H^2_\eG}$, and standard arguments show from this
  that $g(\zeta) \in \overline{F(\zeta)^\frac12 \eG}$ a.e.  Therefore
  $g = 0$ a.e.  It follows that $\eM = \{0\}$, and so $F$ is
  factorable by Lemma \ref{nov15a}.
  
  Let $F=G^*G$ a.e.\ with $G$ outer.  We prove that $\psi G^* \in
  H^\infty_{\lg}$ by showing that $\psi G^* H^2_\eG \subseteq
  H^2_\eG$.  Since $G$ is outer, $\overline{G H^2_\eG} = H^2_\eF$ for
  some closed subspace $\eF$ of~$\eG$.  By (i),
  \begin{equation*}
    \psi G^* (G H^2_\eG) = \psi F H^2_\eG \subseteq H^2_\eG.
  \end{equation*}
  Therefore $\psi G^* H^2_\eF \subseteq H^2_\eG$.  Suppose $f \in
  H^2_{\eG\ominus\eF}$, and consider any $h\in L^2_\eG$.  Then
  \begin{equation*}
    {\ip{G^*f}{h}}_{L^2_\eG} = \int_\thecircle
    {\ip{f(\zeta)}{G(\zeta)h(\zeta)}}_\eG \; d\sigma = 0 ,
  \end{equation*}
  because $\ran G(\zeta) \subseteq \eF$ a.e.  Thus $\psi G^* f = 0$
  a.e.  It follows that $\psi G^* H^2_\eG \subseteq H^2_\eG$, and
  therefore $\psi G^* \in H^\infty_{\lg}$.
\end{proof}

For a simple application of Theorem \ref{nov20c}, suppose that $F$ is
a Laurent polynomial of degree $m$, and choose $\psi$ to be $\zeta^m
I$.  In short order, this yields another proof of the operator \fr
theorem (Theorem \ref{ofr}).

Another application is a theorem of Sarason \cite[p.\ 
198]{Sarason1967}, which generalizes the factorization of a
scalar-valued function in $H^1$ as a product of two functions in $H^2$
(see \cite[p.\ 56]{hoffman}).
 
\begin{theorem}\label{nov20b}
  Every $G$ in $N^+_{\lg}$ can be written $G = G_1G_2$, where $G_1$
  and $G_2$ belong to $N^+_{\lg}$ and
  \begin{equation*}
    G_2^*G_2 = [G^*G]^{1/2}
\quad\text{and}\quad 
    G_1^*G_1 = G_2G_2^*
\qquad {\text{a.e.}}
  \end{equation*}
\end{theorem}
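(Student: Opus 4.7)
The plan is to factor $F^{1/2}$, where $F = G^*G$, by applying Theorem \ref{nov20c} with $\psi$ taken to be the partial isometry from the pointwise polar decomposition of $G$, and then to read off $G_1$ from the ``moreover'' clause of that theorem.

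First I would reduce to the bounded case. Choose a bounded scalar outer function $\varphi$ with $\varphi G \in H^\infty_{\lg}$ and work with $\varphi G$ and $|\varphi|^2 F$ in place of $G$ and $F$; once the three identities are established at this level, dividing through by the scalar outer square root $\varphi^{1/2}$ restores $G_1, G_2 \in N^+_{\lg}$, and the modulus $|\varphi|$ cancels pairwise in each identity. So assume henceforth that $G \in H^\infty_{\lg}$ and $F = G^*G \in L^\infty_{\lg}$.

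Next I would form the pointwise polar decomposition $G(\zeta) = V(\zeta) F(\zeta)^{1/2}$ a.e., so that $V(\zeta)$ is a partial isometry with initial space $\ranclosure F(\zeta)^{1/2}$. Weak measurability of $V$ follows from the strong-operator limit $V = \lim_{\varepsilon \downarrow 0} G (F+\varepsilon I)^{-1/2}$, and $\|V(\zeta)\| \le 1$ puts $V$ in $L^\infty_{\lg}$. Now apply Theorem \ref{nov20c} to the nonnegative function $F^{1/2}$ with $\psi = V$: condition (i) reads $V F^{1/2} = G \in H^\infty_{\lg}$, and condition (ii) is simply the fact that $V(\zeta)$ acts isometrically on its initial space $\ranclosure F(\zeta)^{1/2}$, which coincides with $\ranclosure F(\zeta)$. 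The theorem supplies an outer $G_2 \in N^+_{\lg}$ with $G_2^* G_2 = F^{1/2}$ a.e., and its ``moreover'' clause delivers $G_1 := V G_2^* \in H^\infty_{\lg}$.

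The three required identities then follow essentially formally: directly $G_2^* G_2 = F^{1/2} = [G^*G]^{1/2}$ and $G_1 G_2 = V G_2^* G_2 = V F^{1/2} = G$, while the identity $G_1^* G_1 = G_2 G_2^*$ reduces to checking $V^* V G_2^* = G_2^*$ a.e., which holds because $\ranclosure G_2^*(\zeta) = \ranclosure (G_2^* G_2)^{1/2}(\zeta) = \ranclosure F(\zeta)^{1/2}$ is precisely the initial space of $V(\zeta)$. I expect the main obstacle to be the soft measure-theoretic bookkeeping: verifying weak measurability of the polar part $V$ and that the range identifications hold a.e.\ in $\zeta$. Once these points are granted, the proof is entirely formal, the real work having been absorbed into Theorem \ref{nov20c}.
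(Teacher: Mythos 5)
Your proof is correct and follows essentially the same approach as the paper: apply Theorem \ref{nov20c} to $F^{1/2}$ with $\psi$ the pointwise polar part of $G$, read off $G_2$ from the factorization and $G_1 = \psi G_2^*$ from the ``moreover'' clause, and reduce the general case to $H^\infty_{\lg}$ by a scalar outer multiplier (the paper scales by $\varphi^2$ and divides by $\varphi$, yours scales by $\varphi$ and divides by $\varphi^{1/2}$; both are fine). The only microscopic slip is that $(G_2^*G_2)^{1/2} = F^{1/4}$, not $F^{1/2}$, but since $\ranclosure F^{1/4}(\zeta) = \ranclosure F^{1/2}(\zeta)$ the range identification, and hence the verification of $G_1^*G_1 = G_2G_2^*$, goes through unchanged.
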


\begin{proof}
  Suppose first that $G \in H^\infty_{\lg}$.  For each $\zeta
  \in\thecircle$, write
  \begin{equation*}
    G(\zeta) = U(\zeta)[G^*(\zeta)G(\zeta)]^\frac12 ,
  \end{equation*}
  where $U(\zeta)$ is a partial isometry with initial space
  $\ranclosure [G^*(\zeta)G(\zeta)]^\frac12$.  It can be shown that
  $U$ is weakly measurable.  We apply Theorem \ref{nov20c} with $F =
  [G^*G]^\frac12$ and $\psi = U$.  Conditions (i) and (ii) of Theorem
  \ref{nov20c} are obviously satisfied, and so we obtain an outer
  function $G_2 \in H^\infty_{\lg}$ such that
  \begin{equation*}
        G_2^*G_2 = [G^*G]^{1/2} \qquad {\text{a.e.}}
  \end{equation*}
  and $UG_2^* \in H^\infty_{\lg}$.  Set $G_1 = UG_2^*$.  By
  construction $G_1 \in H^\infty_{\lg}$,
  \begin{equation*}
    G = U(G^*G)^\frac12 = (UG_2^*)G_2 = G_1G_2,
  \end{equation*}
  and $G_1^*G_1 = G_2U^*UG_2^* = G_2G_2^*$ a.e.  The result follows
  when $G \in H^\infty_{\lg}$.

  The general case follows on applying what has just been shown to
  $\varphi^2 G$, where $\varphi$ is a scalar-valued outer function such
  that $\varphi^2 G \in H^\infty_{\lg}$.
\end{proof}

The standard operator generalization of Szeg{\H o}'s theorem also
follows from Theorem \ref{nov20c}.

\begin{theorem}\label{nov23c}
  Let $F$ be a weakly measurable function on $\thecircle$ with values
  in $\lg$ satisfying $F \ge 0$ a.e.\ and
  \begin{equation*}
    \int_\thecircle \log^+ \| F(\zeta) \| \; d\sigma < \infty
\quad\text{and}\quad
    \int_\thecircle \log^+ \| F(\zeta)^{-1} \| \; d\sigma < \infty .
  \end{equation*}
  Then  $F$ is factorable.
\end{theorem}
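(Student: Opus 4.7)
The plan is to apply Theorem \ref{nov20c} with $\psi$ chosen as a scalar outer multiple of $F^{-1}$. Then $\psi F$ becomes a scalar outer function times the identity (which is automatically in $H^\infty_{\lg}$), and condition (ii) of Theorem \ref{nov20c} is free because $F^{-1}$ is invertible wherever it exists. The hypothesis $\log^+\|F^{-1}\| \in L^1$ forces $F(\zeta)$ to be invertible for a.e.\ $\zeta$, while $\log^+\|F\| \in L^1$ permits the preliminary rescaling to the bounded case required by Theorem \ref{nov20c}.

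\emph{Reduction to the bounded case and integrability estimates.} Using $\log^+\|F\| \in L^1$, classical Hardy-space theory produces a bounded scalar outer function $\varphi_0$ on $\bbD$ with boundary values satisfying $\log|\varphi_0| = -\tfrac{1}{2}\log^+\|F\|$ a.e.\ on $\thecircle$. Put $\widetilde F := |\varphi_0|^2 F$, so $\widetilde F \in L^\infty_{\lg}$ and $\widetilde F \ge 0$. If $\widetilde F = \widetilde G^*\widetilde G$ with $\widetilde G \in N^+_{\lg}$, then $G := \widetilde G/\varphi_0$ lies in $N^+_{\lg}$ (a bounded outer multiplier $\varphi_1$ for $\widetilde G$ gives $\varphi_1\varphi_0$ as one for $G$) and satisfies $G^*G = F$ a.e., so it suffices to factor $\widetilde F$. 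From $\|F\|\,\|F^{-1}\| \ge 1$ we have $\log^-\|F^{-1}\| \le \log^+\|F\|$, hence $\log\|F^{-1}\| \in L^1$ by combining both hypotheses, and since $\widetilde F^{-1} = F^{-1}/|\varphi_0|^2$,
\begin{equation*}
  \log\|\widetilde F^{-1}\| = \log\|F^{-1}\| + \log^+\|F\| \in L^1,
\end{equation*}
so in particular $\log^+\|\widetilde F^{-1}\| \in L^1$.

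\emph{Application of Theorem \ref{nov20c}.} Using $\log^+\|\widetilde F^{-1}\| \in L^1$, choose a bounded scalar outer function $\varphi$ with $\log|\varphi| = -\log^+\|\widetilde F^{-1}\|$ a.e., so that $|\varphi|\,\|\widetilde F^{-1}\| \le 1$ a.e.\ on $\thecircle$. Set $\psi(\zeta) := \varphi(\zeta)\widetilde F(\zeta)^{-1}$. Then $\psi \in L^\infty_{\lg}$ and $\psi\widetilde F = \varphi I \in H^\infty_{\lg}$, yielding condition (i). Since $\widetilde F(\zeta)$ is invertible a.e.\ and $\varphi(\zeta) \ne 0$ a.e.\ (bounded outer functions satisfy $|\varphi|>0$ a.e.), $\psi(\zeta)$ is invertible on $\eG$ for a.e.\ $\zeta$, so $\psi(\zeta)\big\vert\overline{\widetilde F(\zeta)\eG}$ is one-to-one, giving condition (ii). Theorem \ref{nov20c} now yields factorability of $\widetilde F$, hence of $F$.

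The delicate point is the interplay of the two log-integrability hypotheses in the middle step: the rescaling by $|\varphi_0|^2$ that bounds $\widetilde F$ would a priori degrade the integrability of $\|\widetilde F^{-1}\|$, but the cancellation built into the choice $\log|\varphi_0|^2 = -\log^+\|F\|$ exactly absorbs the extra contribution of $\log^+\|F\|$ appearing in $\log\|\widetilde F^{-1}\|$, keeping it log-integrable and allowing the outer function $\varphi$ defining $\psi$ to be constructed.
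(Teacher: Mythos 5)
Your proof is correct and follows essentially the same route as the paper's: reduce to the bounded case via a scalar outer rescaling, then apply Theorem \ref{nov20c} with $\psi$ a bounded scalar outer function times the pointwise inverse. The only cosmetic difference is that you track the integrability of $\log\|\widetilde F^{-1}\|$ via the identity $\log\|\widetilde F^{-1}\| = \log\|F^{-1}\| + \log^+\|F\|$, whereas one can note more directly that with $|\varphi_0|\le 1$ a.e.\ the inequality $\log^+\|\widetilde F^{-1}\|\le \log^+\|F^{-1}\|+\log^+\|F\|$ settles the matter with no "delicate cancellation" needed.
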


\begin{proof}
  Since $\log^+ \|F(\zeta) \|$ is integrable, we can choose a scalar
  outer function $\varphi_1$ such that
  \begin{equation*}
    F_1 = F/|\varphi_1|^2 \in  L^\infty_{\lg}.
  \end{equation*}
  Since $\log^+ \|F(\zeta)^{-1} \|$ is integrable,
  so is $\log^+ \|F_1(\zeta)^{-1} \|$.  Hence there is a bounded
  scalar outer function $\varphi$ such that 
  \begin{equation*}
     \varphi F_1^{-1} \in  L^\infty_{\lg}.
  \end{equation*}
  We apply Theorem \ref{nov20c} to $F_1$ with $\psi = \varphi
  F_1^{-1}$.  Condition (i) is satisfied because $\psi F_1 = \varphi
  I$.  Condition (ii) holds because the values of $\psi$ are
  invertible a.e.  Thus $F_1$ is factorable, and hence so is $F$.
\end{proof}

Theorem \ref{nov20c} has a half-plane version, the scalar inner case
of which is given in \cite[p.\ 117]{RRbook}.  This has an application
to the following generalization of Akhiezer's theorem on factoring
entire functions \cite[Chapter 6]{RRbook}.

\begin{theorem}
  Let $F$ be an entire function of exponential type $\tau$, having
  values in $\lg$, such that $F(x) \ge 0$ for all real $x$ and
  \begin{equation*}
    \int_{-\infty}^\infty \frac{\log^+ \|F(t)\|}{1+t^2}\; dt
            < \infty.
  \end{equation*}
  Then $F(x) = G(x)^*G(x)$ for all real $x$ where $G$ is an entire
  function with values in $\lg$ such that $exp(-i\tau z/2) G$ is of
  exponential type $\tau/2$ and the restriction of $G$ to the upper
  half-plane is an outer function.
\end{theorem}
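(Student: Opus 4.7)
The plan is to deduce this operator-valued Akhiezer theorem from the half-plane version of Theorem~\ref{nov20c} by taking $\psi(z) = e^{i\tau z} I$, and then to promote the resulting half-plane outer factor to an entire function via a Schwarz-reflection argument.

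First I reduce to the case that $F$ is bounded on the real line. The hypothesis $\int \log^+\|F(t)\|/(1+t^2)\,dt < \infty$ permits the selection of a scalar outer function $\varphi$ on the upper half-plane with $|\varphi|^{-2}F \in L^\infty_{\lg}$ on the real line; after factoring the bounded auxiliary function one can reinstate the $\varphi$-factor, mirroring the reduction described after Theorem~\ref{nov20c}. Assume henceforth $\|F(x)\| \le M$ for real $x$.

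Next I apply the half-plane analogue of Theorem~\ref{nov20c} (the scalar-inner case of which is given in \cite[p.~117]{RRbook}) with $\psi(z) = e^{i\tau z} I$. Since $F$ is entire of exponential type $\tau$ and bounded by $M$ on the real line, the Phragm\'en--Lindel\"of principle gives $\|F(x+iy)\| \le M e^{\tau y}$ for $y \ge 0$, hence
\[
\|\psi(z) F(z)\| \;=\; e^{-\tau y}\,\|F(z)\| \;\le\; M \quad \text{in } \bbC^+,
\]
so $\psi F \in H^\infty_{\lg}(\bbC^+)$, verifying hypothesis~(i). Hypothesis~(ii) is automatic because $\psi(x)$ is a unimodular scalar multiple of $I$ on the real line. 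The theorem yields a factorization $F(x) = G_0(x)^*G_0(x)$ a.e., with $G_0$ outer in $H^\infty_{\lg}(\bbC^+)$, together with the auxiliary fact that $H := \psi G_0^* = e^{i\tau z}G_0^*$ belongs to $H^\infty_{\lg}(\bbC^+)$.

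The main step is constructing an entire extension of $G_0$. Set $H^\#(z) := H(\bar z)^*$, which is analytic in $\bbC^-$ with boundary values $H^\#(x) = H(x)^* = e^{-i\tau x}G_0(x)$ on the real line. Then $z \mapsto e^{i\tau z}H^\#(z)$ is analytic in $\bbC^-$ with boundary values that coincide a.e.\ with those of $G_0$ viewed from $\bbC^+$. An edge-of-the-wedge (Painlev\'e-type) argument for $\lg$-valued $H^\infty$ functions with matching $L^\infty_{\lg}$ boundary traces then shows that
\[
G(z) \;=\; \begin{cases} G_0(z), & \imag z \ge 0, \\ e^{i\tau z} H^\#(z), & \imag z < 0, \end{cases}
\]
defines an entire function whose restriction to the upper half-plane is the outer function $G_0$. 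One has $\|G(z)\| \le \|G_0\|_\infty$ in $\bbC^+$, and $\|G(z)\| \le \|H\|_\infty\, e^{\tau|\imag z|}$ in $\bbC^-$, whence
\[
\|e^{-i\tau z/2} G(z)\| \;\le\; C\, e^{\tau|\imag z|/2} \;\le\; C\, e^{(\tau/2)|z|}
\]
throughout $\bbC$; that is, $e^{-i\tau z/2}G$ is of exponential type $\tau/2$.

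The main obstacle is the gluing in the third step: one must show that two analytic pieces on opposite half-planes, with identical $L^\infty_{\lg}$ boundary values on the real line, genuinely coalesce to an analytic function across the line. This is an edge-of-the-wedge phenomenon for operator-valued functions and is the substantive analytic content beyond what Theorem~\ref{nov20c} delivers. A secondary technicality is executing the Step~1 reduction so that the entire structure of $F$, and the exponential-type accounting, are preserved when the scalar factor $\varphi$ is reinserted at the end.
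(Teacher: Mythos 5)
The paper itself does not provide a proof of this theorem; it simply cites \cite[Chapter 6]{RRbook} and notes that the half-plane version of Theorem~\ref{nov20c} is the relevant tool. Your proposal follows exactly that route, so on the level of strategy you are aligned with the intended approach.

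There is, however, a genuine gap in the reduction you relegate to a ``secondary technicality.'' Once you replace $F$ by $F_1 := F/|\varphi|^2$ on the real line, $F_1$ is only a boundary function; it is not an entire function. Yet every subsequent step of your argument relies on $F$ being entire of exponential type: the Phragm\'en--Lindel\"of estimate $\|F(x+iy)\|\le M e^{\tau y}$ needs entirety and a real-line bound, and the reflection/gluing construction needs to identify the holomorphic pieces on $\bbC^{\pm}$ with restrictions of an entire function of controlled type. After the reduction you have a bounded function that is not entire, and before the reduction you have an entire function that is not bounded on $\mathbb R$; neither version of $F$ satisfies the hypotheses you invoke in the rest of the argument. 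Reinstating the scalar factor $\varphi$ at the end only compounds the problem, since $\varphi$ is an outer function on $\bbC^+$ with no entire extension, so $G = \varphi G_1$ would not be entire either.

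The fix is not to reduce to the bounded case at all but to work in the Nevanlinna class that the paper already sets up. The hypotheses of the theorem say precisely that $F$ is in the Cartwright class, and a Kre{\u\i}n-type theorem gives $e^{i\tau z}F(z)\in N^+_{\lg}(\bbC^+)$ directly (this is the half-plane Phragm\'en--Lindel\"of fact you need, stated in the correct class). The half-plane version of Theorem~\ref{nov20c} applies with $N^+_{\lg}$ in place of $H^\infty_{\lg}$, yielding an outer $G_0\in N^+_{\lg}(\bbC^+)$ with $e^{i\tau z}G_0^*\in N^+_{\lg}(\bbC^+)$, and the reflection argument can then be carried out for $N^+$ functions whose boundary data agree a.e.\ on $\mathbb R$; this is where the type estimate $\tau/2$ for $e^{-i\tau z/2}G$ should be extracted. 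Your Step~3 gluing is the right idea, but it needs to be justified for functions that are merely of exponential growth off $\mathbb R$ rather than bounded (a form of the Lusin--Privalov/edge-of-the-wedge theorem in the Cartwright setting), and you should say explicitly that this is a standard, citable fact rather than leaving it as an obstacle.
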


\smallskip
\noindent {\bf Matrix case:}
\medskip

We end this section by quoting a few results for matrix-valued
functions.  The matrix setting is more concrete, and one can do more.
Statements often require invertibility assumptions.  We give no
details and leave it to the interested reader to consult other sources
for further information.

Our previous definitions and results transfer in an obvious way to
matrix-valued functions.  For this we choose $\eG = \bbC^r$ for some
positive integer $r$ and indentify operators on $\bbC^r$ with $r
\times r$ matrices.  The operator norm of a matrix is denoted $\|
\cdot \|$.  We write $L^\infty_{r \times r}, H^\infty_{r \times r}$ in
place of $L^\infty_{\lg}, H^\infty_{\lg}$ and $\| \cdot \|_\infty$ for
the norms on these spaces.

Theorem \ref{nov23c} is more commonly stated in a
different form for matrix-valued functions.

\begin{theorem}\label{nov23cc}
  Suppose that $F$ is an $r \times r$ measurable matrix-valued
  function having invertible values on $\thecircle$ such that $F \ge
  0$ a.e.\ and $\log^+ \| F \|$ is integrable.  Then $F$ is
  factorable if and only if $\log \det F$ is integrable.
\end{theorem}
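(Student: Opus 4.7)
The plan is to split into the two implications, with the forward direction reducing to Theorem \ref{nov23c} via a spectral estimate on $F^{-1}$, and the converse following from the canonical factorization of the scalar function $\det G$.

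For the \emph{if} direction, assume $\log\det F \in L^1$. Since $F(\zeta)$ is positive and invertible a.e., let $\lambda_{\min}(\zeta) > 0$ be its smallest eigenvalue. From the elementary bound $\det F \le \lambda_{\min}\,\|F\|^{r-1}$ (the other $r-1$ eigenvalues do not exceed $\|F\|$), we obtain
\[
-\log\lambda_{\min} \le (r-1)\log\|F\| - \log\det F.
\]
Since $\|F^{-1}\| = 1/\lambda_{\min}$, taking positive parts gives
\[
\log^+\|F^{-1}\| \le (r-1)\log^+\|F\| + (\log\det F)^-.
\]
Both terms on the right are integrable (the first by hypothesis, the second because $\log\det F \in L^1$). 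Hence $\log^+\|F^{-1}\|$ is integrable, and Theorem \ref{nov23c} yields factorability of $F$.

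For the \emph{only if} direction, suppose $F = G^*G$ a.e.\ with $G \in N^+_{r\times r}$, which we may take to be outer. Then
\[
\det F = \overline{\det G}\cdot\det G = |\det G|^2,
\]
so $\log\det F = 2\log|\det G|$ and it suffices to show $\log|\det G| \in L^1$. Two points need verification. First, $\det G$ belongs to the scalar class $N^+$: pick a scalar outer $\varphi$ with $\varphi G \in H^\infty_{r\times r}$; then $\varphi^r\det G = \det(\varphi G)$ is a polynomial in bounded analytic entries, hence lies in $H^\infty$, which forces $\det G \in N^+$. Second, $\det G \not\equiv 0$, because $F$ has invertible values a.e.\ and $|\det G|^2 = \det F > 0$ a.e. The canonical factorization $\det G = BSO$ into Blaschke, singular inner, and outer factors then gives $|\det G| = |O|$ a.e.\ on $\thecircle$, and for the scalar outer function $O$ one has $\log|O| \in L^1$ by the very definition of an outer function. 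This gives $\log\det F \in L^1$.

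The two places calling for the most care are the spectral inequality bounding $\log^+\|F^{-1}\|$, and the observation that the scalar $\det G$ is a nonzero element of $N^+$; once these are in place, everything reduces to results already developed in the excerpt, principally Theorem \ref{nov23c} together with the elementary scalar theory of the Nevanlinna class $N^+$.
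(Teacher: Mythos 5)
Your proof is correct. The paper does not prove Theorem~\ref{nov23cc}: it appears in the ``Matrix case'' subsection, which explicitly says ``We give no details and leave it to the interested reader to consult other sources.'' So there is no in-text proof to compare against, but what you have written is essentially the standard reduction. The ``if'' direction correctly derives, from $\det F \le \lambda_{\min}\|F\|^{r-1}$, the bound $\log^+\|F^{-1}\| \le (r-1)\log^+\|F\| + (\log\det F)^-$ and then invokes Theorem~\ref{nov23c}; the ``only if'' direction correctly uses $\det F=|\det G|^2$, that $\det G$ is a nonzero element of the scalar class $N^+$ (via $\varphi^r\det G=\det(\varphi G)\in H^\infty$ for a suitable scalar outer $\varphi$), and that $\log|\varphi|\in L^1$ on $\thecircle$ for every nonvanishing $\varphi\in N^+$. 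One clarifying remark worth adding: under the standing hypothesis $\log^+\|F\|\in L^1$ and a.e.\ invertibility, the conditions $\log\det F\in L^1$ and $\log^+\|F^{-1}\|\in L^1$ are in fact \emph{equivalent} --- the reverse implication follows directly from $-\log^+\|F^{-1}\|\le\log\lambda_i\le\log^+\|F\|$ for every eigenvalue $\lambda_i$ --- so Theorem~\ref{nov23cc} really is just a reformulation of Theorem~\ref{nov23c} in the matrix setting, and your argument makes that transparent.
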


Recall that when $F$ is factorable, there is a unique outer $G$ such
that $F=G^*G$ and $G(0) \ge 0$.  It makes sense to inquire about the
continuity properties of the mapping $\Phi \colon F \to G$ with
respect to various norms.  For example, see Jacob and Partington
\cite{JP2001}.  We cite one recent result in this area.

\begin{theorem}[Barclay \cite{Barclay2004}]
  Let $F,F_n$, $n=1,2,\dots$, be $r \times r$ measurable matrix-valued
  functions on $\thecircle$ having invertible values a.e.\ and
  integrable norms.  Suppose that $F = G^*G$ and $F_n = G_n^*G_n$,
  where $G,G_n$ are $r \times r$ matrix-valued outer functions such
  that $G(0) \ge 0$ and $G_n(0) \ge 0$, $n = 1,2,\dots$.  Then
   \begin{equation*}
    \lim_{n \to \infty} \int_\thecircle
      \| G(\zeta) - G_n(\zeta) \|^2  \; d\sigma = 0
  \end{equation*}
  if and only if
  \begin{enumerate}
  \item[(i)] $\displaystyle{\lim_{n \to \infty} \int_\thecircle
      \| F(\zeta) - F_n(\zeta) \|  \; d\sigma = 0}$, and
\smallskip

  \item[(ii)] the family of functions $\{ \log \det F_n \}_{n=0}^\infty$
  is uniformly integrable.
  \end{enumerate}
\end{theorem}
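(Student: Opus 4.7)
The plan is to prove necessity and sufficiency separately; the former is largely algebraic while the latter combines Szeg{\H o}'s theorem with a weak compactness argument, and is the main obstacle.

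For the direction that $L^2$-convergence implies (i) and (ii), the identity $F - F_n = G^*(G - G_n) + (G - G_n)^* G_n$ combined with the Cauchy-Schwarz inequality immediately gives (i), using that $\|G_n\|_{L^2}$ is bounded. For (ii), $L^2$-convergence forces $G_n(0) \to G(0)$, and the matrix form of Szeg{\H o}'s theorem (Theorem~\ref{nov23cc}) yields
\begin{equation*}
\int_\thecircle \log \det F_n\, d\sigma = 2 \log \det G_n(0) \to 2 \log \det G(0) = \int_\thecircle \log \det F\, d\sigma.
\end{equation*}
Together with $F_n \to F$ in measure (a consequence of (i)) and the pointwise bound $\log^+ \det F_n \le r \log^+ \|F_n\|$ (whose right-hand side is uniformly integrable since $\|F_n\|$ converges in $L^1$), uniform integrability of $\{\log \det F_n\}$ follows by controlling positive and negative parts separately via Vitali's convergence theorem.

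For the reverse direction, (i) gives $\|G_n\|_{L^2}^2 = \int_\thecircle \mathrm{tr}\, F_n\, d\sigma \to \|G\|_{L^2}^2$, so $\{G_n\}$ is bounded in $L^2$. Extract a subsequence $\{G_{n_k}\}$ converging weakly in $L^2$ to some limit $H$, which is itself in $H^2$ as a weak limit of $H^2$ functions; evaluation at the origin is weakly continuous, so $H(0) \ge 0$. Expanding
\begin{equation*}
F_{n_k} = H^*H + H^*(G_{n_k}-H) + (G_{n_k}-H)^*H + (G_{n_k}-H)^*(G_{n_k}-H),
\end{equation*}
the two middle terms tend to zero weakly in $L^1$, so $(G_{n_k}-H)^*(G_{n_k}-H)$ converges weakly in $L^1$ to $F - H^*H$; as the weak $L^1$-limit of pointwise-nonnegative functions, the latter is nonnegative a.e., whence $H^*H \le F$ a.e. Combining (i) and (ii) via Vitali gives $\int_\thecircle \log \det F_{n_k}\, d\sigma \to \int_\thecircle \log \det F\, d\sigma$, and Szeg{\H o}'s theorem then yields $\det G_{n_k}(0) \to \det G(0)$, so $\det H(0) = \det G(0)$.

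The crux is to upgrade these facts to $H = G$. Jensen's inequality applied to the scalar Nevanlinna-class function $\det H$ gives $2\log \det H(0) \le \int_\thecircle \log \det(H^*H)\, d\sigma$, with equality iff $\det H$ is outer, while $H^*H \le F$ yields $\int_\thecircle \log \det(H^*H)\, d\sigma \le \int_\thecircle \log \det F\, d\sigma$. Combined with Szeg{\H o}'s identity for $G$ and the equality $\det H(0) = \det G(0)$, this chain of inequalities collapses, forcing both $\det(H^*H) = \det F$ a.e.\ and $\det H$ outer. Invertibility of $F$ together with $H^*H \le F$ and $\det(H^*H) = \det F$ then upgrades to $H^*H = F$ a.e., so $H = UG$ for a measurable pointwise-unitary $U$. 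The function $U = HG^{-1}$ lies in $N^+_{\lg}$ with unitary boundary values, hence in $H^\infty_{\lg}$ and is inner; since both $\det H$ and $\det G$ are outer, $\det U$ must be a unimodular constant, and Potapov's structure theorem then forces $U$ itself to be a constant unitary matrix. Positivity of $H(0) = UG(0)$ and $G(0)$ with $G(0)$ invertible forces $U = I$ by uniqueness of polar decomposition, giving $H = G$. Since every weak subsequential limit equals $G$, the full sequence $G_n$ converges weakly to $G$ in $L^2$; combined with $\|G_n\|_{L^2} \to \|G\|_{L^2}$, the Radon-Riesz property then yields strong convergence.
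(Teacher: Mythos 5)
The paper does not supply a proof of this theorem; it is stated as a quotation of a result of Barclay, with the reader referred to \cite{Barclay2004}.  There is therefore nothing to compare your proposal against directly, but on its own terms your argument appears to be correct and complete.  The necessity direction is routine once you observe the algebraic identity $F-F_n=G^*(G-G_n)+(G-G_n)^*G_n$ and combine the matrix Szeg\H{o} identity $\int\log\det F_n\,d\sigma=2\log\det G_n(0)$ with the domination $\log^+\det F_n\le r\log^+\|F_n\|\le r\|F_n\|$ to split positive and negative parts, and Scheff\'e then handles the negative part.  The sufficiency direction is the substantive half, and your sequence of steps is sound: extract a weak $L^2$ subsequential limit $H\in H^2$, use the quadratic expansion and weak $L^1$ convergence of the cross terms to get $H^*H\le F$, recover $\det H(0)=\det G(0)$ from Vitali's theorem and Szeg\H{o}, and close the chain of Jensen-type inequalities to force $\det(H^*H)=\det F$ pointwise and $\det H$ outer; the pointwise matrix lemma ``$0\le A\le B$ invertible and $\det A=\det B$ implies $A=B$'' then gives $H^*H=F$, and your argument identifying $U=HG^{-1}$ as a constant unitary equal to $I$ goes through.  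Two very minor remarks: when you invoke Jensen for $\det H$, it is slightly cleaner to note that $\det H$ lies in $N^+$, not merely the Nevanlinna class, so that equality in Jensen's inequality characterizes outerness without worrying about singular factors in the denominator; and the appeal to Potapov's structure theorem can be replaced by an elementary maximum-modulus argument---a contraction-valued $H^\infty$ function $U$ with $U(0)$ unitary is automatically constant, and $U(0)$ is unitary here because it is a contraction with $|\det U(0)|=1$.  Neither of these affects correctness.
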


A family of functions $\{ \varphi_\alpha \}_{\alpha \in A} \subseteq
L^1$ is {\bf uniformly integrable} if for every $\varepsilon > 0$
there is a $\delta > 0$ such that $\int_E |\varphi_\alpha| \, d\sigma
< \varepsilon$ for all $\alpha \in A$ whenever $\sigma(E) < \delta$.
See \cite{Barclay2004} for additional references and similar results
in other norms.

A theorem of Bourgain \cite{Bourgain1986} characterizes all functions
on the unit circle which are products $\bar h g$ with $g,h\in H^\infty$:
{\it A function $f \in L^\infty$ has the form $f = \bar h g$ where
  $g,h\in H^\infty$ if and only if $\log |f|$ is integrable.}  This
resolves a problem of Douglas and Rudin \cite{DouglasRudin}.  The
problem is more delicate than spectral factorization; when $|f|=1$
a.e., the factorization cannot be achieved in general with inner
functions.  Bourgain's theorem was recently generalized to
matrix-valued functions.

\begin{theorem}[Barclay \cite{BarclayPreprint, 
                             BarclayThesis}]\label{jan15a}
  Suppose $F \in L^\infty_{r \times r}$ has invertible values a.e.
  Then $F$ has the form $F=H^*G$ a.e.\ for some $G,H$ in $H^\infty_{r
    \times r}$ if and only if $\log |\det F|$ is integrable.  In this
  case, for every $\varepsilon > 0$ such a factorization can be found
  with
  \begin{equation*}
    \| G \|_\infty \| H \|_\infty < \| F \|_\infty + \varepsilon .
  \end{equation*}
\end{theorem}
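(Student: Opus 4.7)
First I would take determinants in $F=H^*G$, obtaining $\det F = \overline{\det H}\,\det G$ a.e.\ on $\thecircle$. Since $F$ has invertible values a.e., neither $\det G$ nor $\det H$ can vanish on a positive-measure set, so both are nonzero scalar functions in $H^\infty$. Any nonzero $H^\infty$ function has $\log|\cdot|$ in $L^1$ (bounded above by $\log\|\cdot\|_\infty$ and bounded below in $L^1$ by the standard Jensen-type estimate for $H^p$ boundary values), so $\log|\det G|$ and $\log|\det H|$ both lie in $L^1$, and summing gives $\log|\det F|\in L^1$.

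\textbf{Sufficiency---setup.} Conversely, suppose $\log|\det F|$ is integrable. My plan is to combine the matrix Szeg{\H o} theorem (Theorem~\ref{nov23cc}) with the scalar Bourgain theorem cited above. Since $\log\det(F^*F) = 2\log|\det F| \in L^1$ and $F^*F$ has invertible values and lies in $L^\infty_{r\times r}$, Theorem~\ref{nov23cc} produces an outer $G_0 \in H^\infty_{r\times r}$ with $F^*F = G_0^* G_0$. Consequently $F = U G_0$ a.e.\ for a measurable unitary-valued $U \in L^\infty_{r\times r}$. The remaining task is to redistribute $U$ between the two factors so as to write $F = H^* G$ with $H, G \in H^\infty_{r\times r}$.

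\textbf{Core step and main obstacle.} This redistribution is a matrix-valued Bourgain-type decomposition, and it is where the substance of the theorem lies: the naive choice $H = U^*$ fails because a generic unitary-valued $L^\infty$ function is neither analytic nor coanalytic. I would model the argument on Bourgain's scalar proof, promoted to the matrix setting, via an approximation scheme: replace $F$ by matrix functions $F_n$ with bounded inverse (for which Szeg{\H o} applies symmetrically to both $F_n^*F_n$ and $F_n F_n^*$ and yields clean two-sided bounds), construct factorizations $F_n = H_n^* G_n$ by a direct argument in that easier setting, and then pass to the limit, invoking Theorem~\ref{nov20c} to keep the limiting factors in $H^\infty_{r\times r}$. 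The hard part---and the main technical content of Barclay's proof---is the sharp norm estimate $\|G\|_\infty\|H\|_\infty < \|F\|_\infty + \varepsilon$: a crude combination of Szeg{\H o} and Bourgain would give only some unspecified multiplicative constant in place of $\varepsilon$, so one must balance the two factors against each other by a Hankel-operator/duality argument adapted to the noncommutative matrix setting.
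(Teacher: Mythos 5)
The paper does not prove this theorem; it is quoted from Barclay's thesis and preprint, and the text explicitly notes that the proof there is ``long and technical.'' So there is no in-paper argument to compare your proposal against, and it must be judged on its own.

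Your necessity argument is correct and complete: from $F=H^*G$ one gets $\det F = \overline{\det H}\,\det G$ a.e.; since $F$ is invertible a.e., neither scalar $H^\infty$ function $\det G$ nor $\det H$ is identically zero, so each has integrable $\log$-modulus, and $\log|\det F|=\log|\det H|+\log|\det G|\in L^1$. The sufficiency direction, however, is not a proof but a plan, and you say so yourself. You correctly reduce to $F=UG_0$ with $G_0$ the outer Szeg\H{o} factor of $F^*F$ and $U$ unitary a.e., but the entire content of the theorem is now concentrated in the step ``redistribute $U$ between the two factors,'' which you outline only as an approximation scheme without specifying how the intermediate factorizations $F_n=H_n^*G_n$ are to be built, why their norms stay controlled, or how the limiting argument avoids degenerating (e.g.\ why $H_n$ does not go to zero while $G_n$ blows up). You also flag the sharp estimate $\|G\|_\infty\|H\|_\infty<\|F\|_\infty+\varepsilon$ as ``the hard part'' to be handled by ``a Hankel-operator/duality argument adapted to the noncommutative matrix setting'' --- this is a gesture toward a program, not an argument, and it is exactly the part that makes Barclay's proof long and technical. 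As written, the sufficiency half is a reasonable reduction plus an honest acknowledgment of what remains, which leaves a genuine gap.
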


The proof of Theorem \ref{jan15a} in \cite{BarclayThesis} is long and
technical.  In fact, Barclay proves an $L^p$-version of this result
for all $p$, $1 \le p \le \infty$.

Another type of generalization is factorization with indices.  We
quote one result to illustrate this notion.

\begin{theorem}
  Let $F$ be an $r \times r$ matrix-valued function with rational
  entries.  Assume that $F$ has no poles on $\thecircle$ and that
  $\det F(\zeta) \neq 0$ for all $\zeta$ in $\thecircle$.  Then there
  exist integers $\varkappa_1 \le \varkappa_2 \le \dots \le
  \varkappa_r$ such that
  \begin{equation*}
    F(z) = F_-(z) \diag \{z^{\varkappa_1},\dots,z^{\varkappa_r} \}
                        F_+(z),
  \end{equation*}
  where $F_\pm$ are $r \times r$ matrix-valued functions with rational
  entries such that
  \begin{enumerate}
  \item[(i)] $F_+(z)$ has no poles for $|z| \le 1$ and $\det F_+(z)
    \neq 0$ for $|z| \le 1$;
  \item[(ii)] $F_-(z)$ has no poles for $|z| \ge 1$ including
    $z=\infty$ and $\det F_-(z) \neq 0$ for $|z| \ge 1$ including $z =
    \infty$.
  \end{enumerate}
\end{theorem}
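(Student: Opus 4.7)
The plan is to establish this factorization by reducing the matrix problem to a diagonal one and handling each diagonal entry via the elementary scalar case.

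First I would clear denominators, writing $F = N/q$ where $N$ is an $r \times r$ polynomial matrix and $q$ is a scalar polynomial whose zeros avoid $\thecircle$. The scalar function $1/q$ admits a factorization of the asserted form already: a linear factor $z-a$ with $|a|<1$ is rewritten as $z(1-a/z)$, putting the unit $(1-a/z)$ into the outer factor for $|z|\ge 1$ and contributing to the central power of $z$, while $z-b$ with $|b|>1$ is rewritten as $-b(1-z/b)$ and absorbed into the outer factor for $|z|\le 1$. So the problem reduces to factoring the polynomial matrix $N$, which still satisfies $\det N(\zeta)\neq 0$ on $\thecircle$; the scalar data from $1/q$ will combine with the matrix factorization to yield the full result.

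For the polynomial matrix $N$, I would reduce it to a diagonal form $\diag(d_1,\dots,d_r)$ by row and column operations respecting analyticity: left multiplications by matrices in $GL_r$ of rational functions analytic and invertible in $|z|\ge 1$ including at $\infty$, right multiplications by matrices in $GL_r$ of rational functions analytic and invertible in $|z|\le 1$. The scalar case applied to each $d_j$ then produces an integer $\varkappa_j$, and a final conjugation by a constant permutation matrix (itself invertible everywhere, hence absorbable into $F_\pm$) arranges them into nondecreasing order.

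The main obstacle is precisely the diagonal reduction with factors in the prescribed analyticity classes rather than merely unimodular polynomial ones. An ordinary polynomial Smith normal form uses factors that can have poles at $\infty$, violating the condition required of $F_-$, so the $\mathbb{C}[z]$-Smith construction cannot be used verbatim. My intended remedy is to work from a state-space realization $F(z) = D + C(zI - A)^{-1}B$ with $\sigma(A)\cap\thecircle = \emptyset$, which is available because $F$ has no poles on $\thecircle$, and to perform a Birkhoff-type inductive reduction using the $A$-invariant spectral subspaces for eigenvalues inside versus outside $\thecircle$, together with the dual spectral decomposition of the associate matrix $A^\times = A - BD^{-1}C$ (when $D$ is invertible) that encodes the zeros of $F$. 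The partial indices $\varkappa_j$ then emerge as intrinsic invariants of this spectral data, and the invariance ensures that the ordering can be imposed.
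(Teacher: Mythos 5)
The paper does not supply its own proof of this theorem; it states the result and refers to Gohberg, Goldberg, and Kaashoek, who prove it by the same state-space realization method you name as your fallback, so the direction you ultimately point toward agrees with the cited source.

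As a proof, though, your proposal has genuine gaps. There is a tension between the two halves of the plan: after clearing denominators you propose to apply the realization $F(z)=D+C(zI-A)^{-1}B$ to the polynomial matrix $N$, but a realization of this form captures exactly the \emph{proper} rational functions (those finite at $\infty$), and a nonconstant polynomial matrix is never proper. The realization must instead be applied to $F$ itself, and only after a preliminary normalization — multiply by a scalar $((z-a)/(z-b))^m$ with $|a|<1<|b|$ and $m$ large so that $F(\infty)$ exists, and adjust further so that $D=F(\infty)$ is invertible. Both conditions are needed before $A^\times=A-BD^{-1}C$ is even defined, and you flag the second only parenthetically without saying how to secure it. Beyond that, the substantive step — extracting the partial indices $\varkappa_j$ and the factors $F_\pm$ from a matched pair of $A$-invariant and $A^\times$-invariant spectral subspaces associated with $\thecircle$, and then verifying that $F_\pm$ land in the required analyticity and invertibility classes — is named (``Birkhoff-type inductive reduction'') rather than carried out; that construction is precisely the content of the GGK pages the paper cites. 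You have also correctly identified why a $\mathbb{C}[z]$-Smith normal form fails here (no control of behaviour at $\infty$), which is the right obstruction to notice. So the roadmap is sound and consistent with the reference, but the central construction is still missing.
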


The case in which $F$ is nonnegative on $\thecircle$ can be handled
using the operator \fr theorem (the indices are all zero in this
case).  The general case is given in Gohberg, Goldberg, and Kaashoek
\cite[pp.\ 236--239]{GGKvolI}.  This is a large subject that includes,
for example, general theories of factorization in Bart, Gohberg,
Kaashoek, and Ran \cite{BGKR} and Clancey and Gohberg
\cite{ClanceyGohbergOT3}.

\medskip
\noindent {\bf Historical remarks:}
\smallskip

Historical accounts of spectral factorization appear in \cite{BC1992,
  Helson1964, RRbook, Rozanov1967, NFbook}.  Briefly, the problem of
factoring nonnegative matrix-valued functions on the unit circle rose
to prominence in the prediction theory of multivariate stationary
stochastic processes.  The first results of this theory were announced
by Zasuhin \cite{zasuhin} without complete proofs; proofs were
supplied by M.~G.\ Kre{\u\i}n in lectures.  Modern accounts of
prediction theory and matrix generalizations of Szeg{\H o}'s theorem
are based on fundamental papers of Helson and Lowdenslager
\cite{HelsonLowdenslagerI, HelsonLowdenslagerII}, and Wiener and
Masani \cite{WienerMasaniI, WienerMasaniII}.  The general case of
Theorem \ref{nov23c} is due to Devinatz \cite{Devinatz1961}; other
proofs are given in \cite{Douglas1966, Helson1964, RRbook}.  For an
engineering view and computational methods, see \cite[Chapter
8]{KSH2000} and \cite{SayedKailath2001}.

The original source for Lowdenslager's Criterion (Lemmas \ref{sep22a}
and \ref{nov15a}) is \cite{lowdenslager1963}; an error in
\cite{lowdenslager1963} was corrected by Douglas \cite{Douglas1966}.
There is a generalization, given by Sz.-Nagy and Foias \cite[pp.\ 
201--203]{NFbook}, in which the isometry may have a nontrivial unitary
component and the shift component yields a maximal factorable summand.
Lowdenslager's Criterion is used in the construction of canonical
models of operators by de~Branges \cite{deBranges1968}.  See also
Constantinescu \cite{constantinescu1990} for an adaptation to Toeplitz
kernels and additional references.

\goodbreak
\section{Multivariable theory}\label{S:multivar}

It is natural to wonder to what extent the results for one variable
carry over to several variables.  Various interpretations of ``several
variables'' are possible.  The most straightforward is to consider
Laurent polynomials in complex variables $z_1,\ldots, z_d$ that are
nonnegative on the $d$-torus $\thecircle^d$.  The method of Schur
complements in \S\ref{S:Schur} suggests an approach to the
factorization problem for such polynomials.  Care is needed, however,
since the first conjectures for a multivariable \fr theorem that might
come to mind are false, as explained below.  Multivariable
generalizations of the \fr theorem are thus necessarily weaker than
the one-variable result.  One difficulty has to do with degrees, and
if the condition on degrees is relaxed, there is a neat result in the
strictly positive case (Theorem \ref{jan16a}).

By a Laurent polynomial in $z = (z_1,\dots,z_d)$ we understand an
expression
\begin{equation}
  \label{jan16b}
  Q(z) = \sum_{k_1=-m_1}^{m_1} \cdots \sum_{k_d=-m_d}^{m_d}
  Q_{k_1,\dots,k_d} z_1^{k_1} \cdots z_d^{k_d} .
\end{equation}
We assume that the coefficients belong to $\lg$, where $\eG$ is a
Hilbert space.  With obvious interpretations, the scalar case is
included.  By an analytic polynomial with coefficients in $\lg$ we
mean an analogous expression, of the form
\begin{equation}
  \label{jan16c}
  P(z) = \sum_{k_1=0}^{m_1} \cdots \sum_{k_d=0}^{m_d}
  P_{k_1,\dots,k_d} z_1^{k_1} \cdots z_d^{k_d} .
\end{equation}
The numbers $m_1,\dots,m_d$ in \eqref{jan16b} and \eqref{jan16c} are
upper bounds for the degrees of the polynomials in $z_1,\dots,z_d$,
which we define as the smallest values of $m_1,\dots,m_d$ that can be
used in the representations \eqref{jan16b} and \eqref{jan16c}.

Suppose that $Q(z)$ has the form \eqref{jan16b} and satisfies
$Q(\zeta) \ge 0$ for all $\zeta \in \thecircle^d$, that is, for all
$\zeta = (\zeta_1,\dots,\zeta_d)$ with $|\zeta_1| = \cdots =
|\zeta_d|=1$.  Already in the scalar case, one cannot always find an
analytic polynomial $P(z)$ such that $Q(\zeta) = P(\zeta)^*P(\zeta)$,
$\zeta \in \thecircle^d$.  This was first explicitly shown by Lebow
and Schreiber \cite{MR684391}.  There are also difficulties in writing
$Q(\zeta) = \sum _{j=1}^r P_j(\zeta)^*P_j(\zeta)$, $\zeta \in
\thecircle^d$, for some finite set of analytic polynomials, at least
if one requires that the degrees of the analytic polynomials do not
exceed those of $Q(z)$ as in the one-variable case (see Naftalevich
and Schreiber \cite{MR803356}, Rudin \cite{MR0151796}, and Sakhnovich
\cite[\S 3.6]{LAS1997}).  The example in \cite{MR803356} is based on a
Cayley transform of a version of a real polynomial over $\mathbb R^2$
called Motzkin's polynomial, which was the first explicit example of a
nonnegative polynomial in $\mathbb R^d$, $d>1$, which is not a sum of
squares of polynomials.  What is not mentioned in these sources is
that if we loosen the restriction on degrees, the polynomial in
\cite{MR803356} can be written as a sum of squares
(see~\cite{DW2005}).  Nevertheless, for three or more variables, very
general results of Scheiderer \cite{Scheiderer1} imply that there
exist nonnegative, but not strictly positive, polynomials which cannot
be expressed as such finite sums regardless of degrees.

\begin{theorem}\label{jan16a}
  Let $Q(z)$ be a Laurent polynomial in $z = (z_1,\dots,z_d)$ with
  coefficients in $\lg$ for some Hilbert space $\eG$.  Suppose that
  there is a $\delta > 0$ such that $Q(\zeta) \ge \delta I$ for all $\zeta
  \in\thecircle^d$.  Then
\begin{equation}\label{jan16ee}
  Q(\zeta) = \sum_{j=1}^r P_j(\zeta)^*P_j(\zeta), 
           \qquad \zeta \in \thecircle^d,
\end{equation}
for some analytic polynomials $P_1(z),\dots,P_r(z)$ in $z =
(z_1,\dots,z_d)$ which have coefficients in $\lg$.  Furthermore, for
any fixed $k$, the representation $\eqref{jan16ee}$ can be chosen such
that the degree of each analytic polynomial in $z_k$ is no more than
the degree of $Q(z)$ in $z_k$.
\end{theorem}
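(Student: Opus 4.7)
The plan is to proceed by induction on the number of variables $d$, using Theorem \ref{ofr} as the base case $d=1$. For the inductive step, I would group variables as $z = (z', z_d)$ with $z' = (z_1, \ldots, z_{d-1})$, and view $Q(z',z_d) = \sum_{k=-m_d}^{m_d} Q_k(z')\, z_d^k$ as a Laurent polynomial in $z_d$ whose coefficients $Q_k(z')$ are Laurent polynomials in $z'$ with values in $\lg$. Identifying each $Q_k(z')$ with the multiplication operator $M_{Q_k}$ on the Hilbert space $\eH$ of square-integrable $\eG$-valued functions on the $(d-1)$-torus, the hypothesis $Q(\zeta) \ge \delta I$ on $\thecircle^d$ becomes strict positivity on $\thecircle$ of the Laurent polynomial $\zeta_d \mapsto \sum_k M_{Q_k} \zeta_d^k$ with coefficients in $\eL(\eH)$.

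Applying Theorem \ref{ofr} to this one-variable, operator-valued problem produces an analytic operator polynomial $A(z_d) = \sum_{j=0}^{m_d} A_j\, z_d^j$ of degree at most $m_d$ in $z_d$, with each $A_j \in \eL(\eH)$, such that $\sum_k M_{Q_k}\zeta_d^k = A(\zeta_d)^*A(\zeta_d)$ on $\thecircle$. Equivalently, for a.e.\ $\zeta' \in \thecircle^{d-1}$, $Q(\zeta', \zeta_d) = P(\zeta',\zeta_d)^* P(\zeta',\zeta_d)$ on $\thecircle$, where $P(\zeta',z_d) = \sum_j P_j(\zeta')\, z_d^j$ and the $P_j(\zeta')$ are bounded measurable functions of $\zeta'$ with values in $\lg$. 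This is not yet a factorization by analytic polynomials in $d$ variables, because the $P_j$ need not be polynomials in $\zeta'$. It is exactly here that the assumption $Q \ge \delta I$ becomes indispensable: it provides slack for approximation.

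The main reduction step is to approximate each $P_j(z')$ by an analytic polynomial $\tilde P_j(z')$ of degree in each $z_k$ (for $k<d$) at most the degree of $Q$ in $z_k$, for example by Fej\'er-type truncation of the Fourier expansion of $P_j$. Setting $\tilde P(z',z_d) = \sum_j \tilde P_j(z')\, z_d^j$, the remainder $Q - \tilde P^*\tilde P$ is a Laurent polynomial in $d$ variables which, for sufficiently fine approximation, remains bounded below by $\delta/2\cdot I$ on $\thecircle^d$. Iterating the entire procedure on the remainder, with the approximation error taken to decrease geometrically at each stage, should yield in finitely many steps a representation $Q = \sum_{j=1}^r P_j^*P_j$ by analytic polynomials with the prescribed degree bounds.

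The principal obstacle is to control simultaneously (i) the degrees of the approximating polynomials $\tilde P_j$ in each variable $z_k$ and (ii) the termination of the iteration after finitely many summands. Truncating the Fourier expansion of an $L^\infty$ operator-valued function on $\thecircle^{d-1}$ to a polynomial of prescribed degree is straightforward, but getting a close approximation in the uniform operator norm requires some regularity of $P_j$ as a function of $z'$; establishing this regularity (likely through continuous dependence of the outer factor on the data, which is where the strict lower bound $\delta$ is used in a second crucial way) is the delicate part. Once this is in place, the finiteness of $r$ should follow because the residual at each stage has its lower bound and its $L^\infty$ norm controlled simultaneously, allowing the last residual to be absorbed into a single additional square. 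The degree statement in the theorem is the more subtle point, and I expect it to require the most careful bookkeeping.
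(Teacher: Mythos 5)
Your proposal takes a genuinely different route from the paper's — an inductive, iterative approximation scheme versus the paper's one-shot argument — but as sketched it has gaps that do not look repairable within the scheme you describe.

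The most serious problem is termination. After you replace the exact outer factor $P(\zeta',z_d)$ (whose coefficients in $\zeta'$ are generally not polynomials) by a polynomial approximant $\tilde P$, the residual $Q-\tilde P^*\tilde P$ is again a nonconstant Laurent polynomial in all $d$ variables, and there is no mechanism in your construction that forces it ever to become an exact finite sum of squares. Shrinking the approximation error at each stage makes $\|Q_n\|_\infty$ small, but the ratio of $\|Q_n\|_\infty$ to the guaranteed lower bound $\delta_n$ does not improve, so the residual is qualitatively as hard to factor after $n$ steps as at the start; the claim that the last residual can be ``absorbed into a single additional square'' has no support. A second, related error is the assertion that $Q-\tilde P^*\tilde P \ge (\delta/2)I$ for a good enough approximation. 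This is false: writing $\tilde P = P + E$, one has $\tilde P^*\tilde P = Q + P^*E + E^*P + E^*E$, and the cross terms need not be small in the semidefinite order even when $\|E\|_\infty$ is small; $\tilde P^*\tilde P$ can exceed $Q$ on a large part of $\thecircle^d$. One can compensate by multiplying $\tilde P$ by $(1-\eta)$, but a computation shows this leaves the ``condition number'' $\|Q_n\|/\delta_n$ essentially unchanged, so it does not rescue the termination argument. Finally, your appeal to Theorem \ref{ofr} over $\eL(\eH)$ with $\eH = L^2(\thecircle^{d-1},\eG)$ silently assumes that the outer factor again has multiplication-operator coefficients; this is true (it follows from uniqueness of the normalized outer factor together with the fact that $T_Q$ commutes with the action of $L^\infty(\thecircle^{d-1})$), but it must be proved, and you would further need continuity of those coefficients in $\zeta'$ to get uniform convergence of Fej\'er means — a nontrivial stability statement that you correctly flag as delicate but do not address.

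For contrast, the paper's proof (following \cite{MAD2004} and \cite{DW2005}) avoids iteration entirely. After singling out, say, $z_1$, one forms the Toeplitz operator $T$ of $Q$ in $z_1$ whose entries $T_j$ are themselves Toeplitz operators in the remaining variables, truncates each inner block to finite $(N+1)\times(N+1)$ size (with a $1/(N+1)$ normalization), and applies the one-variable operator Fej\'er–Riesz theorem to the resulting operator-valued Laurent polynomial $\Psi(w)=\sum T_j'w^j$. Unwrapping this factorization produces a sum-of-squares decomposition not of $Q$ itself but of a Ces\`aro-weighted polynomial $Q^{(N)}$. The strict positivity $Q\ge\delta I$ is used exactly once, at the very end: one applies the lemma to the polynomial $\widetilde Q$ obtained from $Q$ by inserting the inverse Ces\`aro weights $(N+1)/(N+1-|k|)$, chooses $N$ so large that $\|\widetilde Q - Q\|_\infty < \delta$ (hence $\widetilde Q\ge 0$), and observes that $(\widetilde Q)^{(N)} = Q$ exactly. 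No approximation of $Q$ occurs, no remainder is created, and the finite upper bound $r\le N+1$ on the number of squares is automatic. This is also how the paper achieves the degree statement: the $z_1$-degree of every $F_\ell$ equals that of $Q$ by construction of the Toeplitz-of-Toeplitz matrix.
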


The scalar case of Theorem \ref{jan16a} follows by a theorem of
Schm\"udgen \cite{Schmudgen}, which states that strictly positive
polynomials over compact semialgebraic sets in $\mathbb R^n$ (that is,
sets which are expressible in terms of finitely many polynomial
inequalities) can be written as weighted sums of squares, where the
weights are the polynomials used to define the semialgebraic set (see
also \cite{MR759099}); the proof is nonconstructive.  On the other
hand, the proof we sketch using Schur complements covers the
operator-valued case, and it gives an algorithm for finding the
solution.  One can also give estimates for the degrees of the
polynomials involved, though we have not stated these.

We prove Theorem \ref{jan16a} for the case $d=2$, following Dritschel
\cite{MAD2004}.  The general case is similar.  The argument mimics the
method of Schur complements, especially in its original form used in
\cite{MAD2004}.  In place of Toeplitz matrices whose entries are
operators, in the case of two variables we use Toeplitz matrices whose
entries are themselves Toeplitz matrices.  The fact that the first
level Toeplitz blocks are infinite in size causes problems, and so we
truncate these blocks to finite size.  Then everything goes through,
but instead of factoring the original polynomial $Q(z)$, the result is
a factorization of polynomials $Q^{(N)}(z)$ that are close to $Q(z)$.
When $Q(\zeta) \ge \delta I$ on $\thecircle^d$ for some $\delta > 0$,
there is enough wiggle room to factor $Q(z)$ itself.  We isolate the
main steps in a lemma.

\begin{lemma}\label{jan17a}
  Let 
  \begin{equation*}
     Q(z) = \sum_{j=-m_1}^{m_1} \sum_{k=-m_2}^{m_2}\,
  Q_{jk} z_1^j z_2^k 
  \end{equation*}
  be a Laurent polynomial with coefficients in $\lg$ such that
  $Q(\zeta) \ge 0$ for all $\zeta = (\zeta_1,\zeta_2)$ in
  $\thecircle^2$.  Set
  \begin{equation*}
         Q^{(N)}(z) = \sum_{j=-m_1}^{m_1} \sum_{k=-m_2}^{m_2}
           \frac{N+1-|k|}{N+1} \,
            Q_{jk}\, z_1^j z_2^k .
  \end{equation*}
  Then for each $N \ge m_2$, there are analytic polynomials
  \begin{equation}\label{jan18d}
    F_\ell(z) = \sum_{j=0}^{m_1} \sum_{k=0}^{N}
  F_{jk}^{(\ell)} \, z_1^j z_2^k ,
  \qquad \ell = 0,\dots,N,
  \end{equation}
  with coefficients in $\lg$ such that
  \begin{equation}\label{jan18c}
    Q^{(N)}(\zeta) = \sum_{\ell=0}^N F_\ell(\zeta)^*F_\ell(\zeta),
    \qquad \zeta \in\thecircle^2 .
  \end{equation}
\end{lemma}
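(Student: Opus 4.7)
The plan is to reduce two-variable factorization to a one-variable problem by regrouping $Q$ in powers of $z_2$, assembling the $z_2$-coefficients into a single operator acting on the enlarged Hilbert space $\eG^{N+1}$, and then invoking the operator Fej\'er--Riesz theorem (Theorem~\ref{ofr}) on that enlarged coefficient space.

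First, write
\begin{equation*}
Q(z_1,z_2) = \sum_{k=-m_2}^{m_2} R_k(z_1)\, z_2^k, \qquad
R_k(z_1) = \sum_{j=-m_1}^{m_1} Q_{jk}\, z_1^j,
\end{equation*}
so that each $R_k(z_1)$ is a Laurent polynomial of degree $m_1$ in $z_1$ with coefficients in $\lg$, and $R_{-k}(\zeta_1) = R_k(\zeta_1)^*$ on $\thecircle$. For $N \ge m_2$, form the $(N+1) \times (N+1)$ block Toeplitz matrix
\begin{equation*}
\widetilde{Q}^{(N)}(z_1) = \bigl( R_{p-q}(z_1) \bigr)_{p,q=0}^{N},
\end{equation*}
which is itself a Laurent polynomial of degree $m_1$ in $z_1$ with coefficients in $\eL(\eG^{N+1})$. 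The hypothesis $N \ge m_2$ ensures that every $R_k$ actually fits into the block structure rather than being cut off.

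The substantive step is to verify that $\widetilde{Q}^{(N)}(\zeta_1) \ge 0$ for every $\zeta_1 \in \thecircle$. Fixing $\zeta_1$, identity~\eqref{aug28a} applied to the nonnegative one-variable Laurent polynomial $\zeta_2 \mapsto Q(\zeta_1,\zeta_2)$ with $f(\zeta_2) = \sum_{p=0}^{N} \xi_p\, \zeta_2^p$ (for any $\xi = (\xi_0,\dots,\xi_N) \in \eG^{N+1}$) gives
\begin{equation*}
\ip{\widetilde{Q}^{(N)}(\zeta_1)\, \xi}{\xi}
= \sum_{p,q=0}^{N} \ip{R_{p-q}(\zeta_1)\, \xi_q}{\xi_p}
= \int_{\thecircle}\! \ip{Q(\zeta_1,\zeta_2) f(\zeta_2)}{f(\zeta_2)}\, d\sigma(\zeta_2) \ge 0.
\end{equation*}
Theorem~\ref{ofr}, applied on the Hilbert space $\eG^{N+1}$, then produces an analytic polynomial $\widetilde{P}^{(N)}(z_1) = \sum_{j=0}^{m_1} \widetilde{P}_j^{(N)} z_1^j$ with coefficients in $\eL(\eG^{N+1})$ satisfying $\widetilde{Q}^{(N)}(\zeta_1) = \widetilde{P}^{(N)}(\zeta_1)^* \widetilde{P}^{(N)}(\zeta_1)$ on $\thecircle$. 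Writing each block coefficient as $\widetilde{P}_j^{(N)} = \bigl( F_{jk}^{(\ell)} \bigr)_{\ell,k=0}^{N}$ with entries $F_{jk}^{(\ell)} \in \lg$, I would define $F_\ell$ as in~\eqref{jan18d}, up to an overall scalar multiple fixed below.

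To verify~\eqref{jan18c}, test the operator identity against the vector $\xi(\zeta_2) = (v,\zeta_2 v,\dots,\zeta_2^N v)/\sqrt{N+1}$ for arbitrary $v \in \eG$ and $\zeta_2 \in \thecircle$. The left side collapses to $\ip{Q^{(N)}(\zeta)\, v}{v}$ because the combinatorial count $\#\{(p,q) : 0 \le p,q \le N,\ p-q=k\} = N+1-|k|$ reproduces exactly the Fej\'er weight appearing in $Q^{(N)}$. The right side equals $(N+1)^{-1}\sum_{\ell=0}^{N} \|F_\ell(\zeta)\, v\|^2$, so rescaling each $F_\ell$ by the factor $1/\sqrt{N+1}$ (which alters neither the degree in $z_1$ nor the degree in $z_2$) delivers the required identity~\eqref{jan18c}. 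The main obstacle is the nonnegativity check for $\widetilde{Q}^{(N)}(\zeta_1)$; once one recognizes it as the $(N+1)$-block compression of the Toeplitz operator with symbol $Q(\zeta_1,\cdot)$, the Fej\'er weights emerge automatically from the finite truncation, and everything else is bookkeeping.
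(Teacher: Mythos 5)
Your proof is correct and follows essentially the same route as the paper: form the $(N+1)\times(N+1)$ block compression of the two-variable Toeplitz operator (a Laurent polynomial in $z_1$ with coefficients in $\eL(\eG^{N+1})$), establish its nonnegativity because for each fixed $\zeta_1$ it is a compression of the positive Toeplitz operator with symbol $Q(\zeta_1,\cdot)$, factor it by Theorem~\ref{ofr} on $\eG^{N+1}$, and read off $F_0,\dots,F_N$ as the row blocks of the resulting analytic factor, the normalization $1/(N+1)$ and the Fej\'er weights $(N+1-|k|)/(N+1)$ appearing because the $k$-th anti-diagonal of an $(N+1)\times(N+1)$ matrix has $N+1-|k|$ entries; the paper verifies identity~\eqref{jan18c} by summing along these diagonals explicitly, while you reach the same conclusion by polarizing at the test vector $\xi(\zeta_2)$, which is a tidier packaging of the same count. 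One cosmetic caveat: with the convention $\widetilde{Q}^{(N)}(z_1)=(R_{p-q}(z_1))_{p,q}$, testing against $\xi(\zeta_2)=(v,\zeta_2 v,\dots,\zeta_2^N v)/\sqrt{N+1}$ actually yields $\ip{Q^{(N)}(\zeta_1,\overline{\zeta_2})\,v}{v}$ on the left rather than $\ip{Q^{(N)}(\zeta)v}{v}$, so you should either test with powers of $\overline{\zeta_2}$ or reverse the column indexing when defining the coefficients $F^{(\ell)}_{jk}$ from the blocks $\widetilde{P}^{(N)}_j$ --- exactly the adjustment the paper makes by ordering the columns of $\Phi_j$ as $\Phi_{jN},\Phi_{j,N-1},\dots,\Phi_{j0}$.
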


\begin{proof}
  Write
  \begin{equation*}
     Q(z) = \sum_{j=-m_1}^{m_1} \bigg(\sum_{k=-m_2}^{m_2}\,
  Q_{jk} z_2^k \bigg)  z_1^j
  =
      \sum_{j=-m_1}^{m_1} R_j(z_2)\, z_1^j ,
  \end{equation*}
  and extend all sums to run from $-\infty$ to $\infty$ by setting
  $Q_{jk} = 0$ and $R_j(z_2)=0$ if $|j|>m_1$ or $|k|>m_2$.  Introduce
  a Toeplitz matrix $T$ whose entries are the Toeplitz matrices $T_j$
  corresponding to the Laurent polynomials $R_j(z_2)$, that is,
  \begin{equation*}
    T = \begin{pmatrix}
    T_{0}   & T_{-1}   & T_{-2} & \cdots\\
    T_{1}   & T_{0}    & T_{-1} & \ddots \\
    T_{2}   & T_{1}    & T_{0} & \ddots \\
    \vdots    & \ddots  & \ddots & \ddots
  \end{pmatrix} ,
\qquad
T_j
=
    \begin{pmatrix}
    Q_{j0}   & Q_{j,-1}   & Q_{j,-2} & \cdots\\
    Q_{j1}   & Q_{j0}    & Q_{j,-1} & \ddots \\
    Q_{j2}   & Q_{j1}    & Q_{j0} & \ddots \\
    \vdots    & \ddots  & \ddots & \ddots
  \end{pmatrix},
  \end{equation*}
  $j = 0,\pm 1, \pm 2, \dots \,$.  Notice that $T$ is finitely banded,
  since $T_j = 0$ for $|j| > m_1$.  The identity \eqref{aug28a} has
  the following generalization:
  \begin{equation*}
    \ip{Th}{h} = \sum_{p=0}^\infty \sum_{q=0}^\infty 
                           \ip{T_{q-p}h_p}{h_q}  
=
  \int_{\thecircle^2} {\ip{Q(\zeta)h(\zeta)}{h(\zeta)}}_\eG\; 
                               d\sigma_2(\zeta) \,.
  \end{equation*}
  Here $\zeta = (\zeta_1,\zeta_2)$ and $d\sigma_2(\zeta) =
  d\sigma(\zeta_1) d\sigma(\zeta_2)$.  Also,
  \begin{equation*}
    h(\zeta) = \sum_{p=0}^\infty \sum_{q=0}^\infty
                  h_{pq} \, \zeta_1^p \zeta_2^q \,,
  \end{equation*}
  where the coefficients are vectors in $\eG$ and all but finitely
  many are zero, and
  \begin{equation*}
    h = 
    \begin{pmatrix}
      h_{0} \\ h_{1} \\  \vdots
    \end{pmatrix},
\qquad
h_p = 
    \begin{pmatrix}
      h_{p0} \\ h_{p1} \\  \vdots
    \end{pmatrix},
\qquad p=0,1,2,\dots \,.
  \end{equation*}
  It follows that $T$ acts as a bounded operator on a suitable direct
  sum of copies of~$\eG$.  Since $Q(\zeta) \ge 0$ on $\thecircle^2$,
  $T \ge 0$.

  Fix $N \ge m_2$.  Set
  \begin{equation*}
    T' = \begin{pmatrix}
    T_{0}'   &T_{-1}'   &T_{-2}' & \cdots\\
    T_{1}'   &T_{0}'    &T_{-1}' & \ddots \\
    T_{2}'   &T_{1}'    &T_{0}' & \ddots \\
    \vdots    & \ddots  & \ddots & \ddots
  \end{pmatrix}  ,
  \end{equation*}
  where $T_j'$ is the upper $(N+1)\times(N+1)$ block of $T_j$ with a
  normalizing factor:
  \begin{equation*}
    T_j' = \frac{1}{N+1}
    \begin{pmatrix}
    Q_{j0}   & Q_{j,-1}  &\cdots   & Q_{j,-N} \\
    Q_{j1}   & Q_{j0}    &\cdots   & Q_{j,-N+1} \\
         && \cdots \\
    Q_{jN}   & Q_{j,N-1}    &\cdots   & Q_{j0} \\
   \end{pmatrix},
   \qquad j = 0,\pm 1, \pm 2, \dots \,.
  \end{equation*}
  Then $T'$ is the Toeplitz matrix corresponding to the Laurent
  polynomial
  \begin{equation*}
    \Psi(w) = \sum_{j=-m_1}^{m_1} T_j'\, w^j .
  \end{equation*}
  Moreover, $T' \ge 0$ since it is a positive constant multiple of a
  compression of $T$.  Thus $\Psi(w) \ge 0$ for $|w|=1$.  By the
  operator \fr theorem (Theorem \ref{ofr}),
  \begin{equation}\label{jan18a}
    \Psi(w) = \Phi(w)^*\Phi(w), \qquad |w|=1,
  \end{equation}
  for some analytic polynomial $\Phi(w) = \sum_{j=0}^{m_1} \Phi_j w^j$
  whose coefficients are $(N+1)\times(N+1)$ matrices with entries in
  $\lg$.  Write
  \begin{equation*}
    \Phi_j = 
    \begin{pmatrix}
      \Phi_{jN} & \Phi_{j,N-1} & \cdots & \Phi_{j0}
    \end{pmatrix} ,
  \end{equation*}
  where $\Phi_{jk}$ is the $k$-th column in $\Phi_j$.  Set
\begin{equation*}
  \widetilde F(z) = \sum_{j=0}^{m_1} \sum_{k=0}^{N}
        \Phi_{jk} \, z_1^j z_2^k \,.
\end{equation*}
The identity \eqref{jan18a} is equivalent to $2m_1 +1$ relations for
the coefficients of $\Psi(w)$.  The coefficients of $\Psi(w)$ are
constant on diagonals, there being $N+1-k$ terms in the $k$-th
diagonal above the main diagonal, and similarly below.  If these terms
are summed, the result gives $2m_1 + 1$ relations equivalent to the
identity
\begin{equation}\label{jan18b}
  Q^{(N)}(\zeta) = \widetilde F(\zeta)^*\widetilde F(\zeta), \qquad
  \zeta\in\thecircle^2. 
\end{equation}
We omit the calculation, which is straightforward but laborious.  To
convert \eqref{jan18b} to the form \eqref{jan18c}, write
\begin{equation*}
  \Phi_{jk}
=
  \begin{pmatrix}
      F^{(0)}_{jk} \\ F^{(1)}_{jk} \\ \vdots \\ F^{(N)}_{jk}
    \end{pmatrix} ,
\qquad j = 0,\dots,m_1 \text{\;\;and\;\;} k=0,\dots,N.
\end{equation*}
Then 
\begin{equation*}
  \widetilde F(z) 
=
\begin{pmatrix}
  F_0(z) \\ F_1(z) \\ \vdots \\ F_N(z)
\end{pmatrix} ,
\end{equation*}
where $F_0(z),\dots,F_N(z)$ are given by \eqref{jan18d}, and so
\eqref{jan18b} takes the form \eqref{jan18c}.
\end{proof}

\begin{proof}[Proof of Theorem $\ref{jan16a}$ for the case $d=2$]
  Suppose $N \ge m_2$, and set
  \begin{equation*}
    \widetilde Q(z)
=
\sum_{j=-m_1}^{m_1} \sum_{k=-m_2}^{m_2}
           \frac{N+1}{N+1-|k|} \,
            Q_{jk}\, z_1^j z_2^k .
  \end{equation*}
  The values of $\widetilde Q(z)$ are selfadjoint on $\thecircle^2$,
  and $\widetilde Q(z) = Q(z) + S(z)$, where
\begin{equation*}
  S(z) = \sum_{j=-m_1}^{m_1} \sum_{k=-m_2}^{m_2}
           \frac{|k|}{N+1-|k|} \,
            Q_{jk}\, z_1^j z_2^k .
\end{equation*}
Now choose $N$ large enough that $\| S(\zeta) \| < \delta$, $\zeta \in
\thecircle^2$.  Then $\widetilde Q(\zeta) \ge 0$ on $\thecircle^2$,
and the result follows on applying Lemma \ref{jan17a} to $\widetilde
Q(z)$.
\end{proof}

Further details can be found in \cite{MAD2004}, and a variation on
this method yielding good numerical results is given in Geronimo and
Lai \cite{GL2006}.

While, as we mentioned, there is in general little hope of finding a
factorization of a positive trigonometric polynomial in two or more
variables in terms of one or more analytic polynomials of the same
degree, it happens that there are situations where the existence of
such a factorization is important.  In particular, Geronimo and
Woerdeman consider this question in the context of the autoregressive
filter problem \cite{gw1,gw2}, with the first paper addressing the
scalar case and the second the operator-valued case, both in two
variables.  They show that for scalar-valued polynomials in this
setting there exists a factorization in terms of a single
\textbf{stable} (so invertible in the bidisk $\mathbb D^2$) analytic
polynomial of the same degree if and only if a full rank condition
holds for certain submatrices of the associated Toeplitz matrix
(\cite[Theorem~1.1.3]{gw1}).  The condition for operator-valued
polynomials is similar, but more complicated to state.  We refer the
reader to the original papers for details.

Stable scalar polynomials in one variable are by definition outer, so
the Geronimo and Woerdeman results can be viewed as a statement about
outer factorizations in two variables.  In \cite{DW2005}, a different
notion of outerness is considered.  As we saw in \S\ref{S:Schur}, in
one variable outer factorizations can be extracted using Schur
complements.  The same Schur complement method in two or more
variables gives rise to a version of ``outer'' factorization which in
general does not agree with that coming from stable polynomials.  In
\cite{DW2005}, this Schur complement version of outerness is used when
considering outer factorizations for polynomials in two or more
variables.  As in the Geronimo and Woerdeman papers, it is required
that the factorization be in terms of a single analytic polynomial of
the same degree as the polynomial being factored.  Then necessary and
sufficient conditions for such an outer factorization under these
constraints are found (\cite[Theorem~4.1]{DW2005}).

The problem of spectral factorization can also be considered in the
multivariable setting.  Blower \cite{blower} has several results along
these lines for bivariate matrix-valued functions, including a matrix
analogue of Szeg\H{o}'s theorem similar to Theorem~\ref{nov23cc}.  His
results are based on a two-variable matrix version of
Theorem~\ref{jan16a}, and the arguments he gives coupled with
Theorem~\ref{jan16a} can be used to extend these results to
polynomials in $d>2$ variables as well.

\goodbreak
\section{Noncommutative factorization}\label{S:nc_fact}

We now present some noncommutative interpretations of the notion of
``several variables,'' starting with the one most frequently
considered, and for which there is an analogue of the \fr theorem.  It
is due to Scott McCullough and comes very close to the one-variable
result.  Further generalizations have been obtained by Helton,
McCullough and Putinar in~\cite{HMP3}.  For a broad overview of the
area, two nice survey articles have recently appeared by Helton and
Putinar \cite{HeltonPutinar} and Schm\"udgen \cite{schmuedgen}
covering noncommutative real algebraic geometry, of which the
noncommutative analogues of the \fr theorem are one aspect.

In keeping with the assumptions made in \cite{MR1815959}, all Hilbert
spaces in this section are taken to be separable.  Fix Hilbert spaces
$\eG$ and $\eH$, and assume that $\eH$ is infinite dimensional.

Let $S$ be the free semigroup with generators $a_1,\dots,a_d$.  Thus
$S$ is the set of words
\begin{equation}\label{jan24a}
  w = a_{j_1}\cdots a_{j_k},
\quad
j_1,\dots,j_k \in \{1,\dots,d\},
\quad
k=0,1,2,\dots \,,
\end{equation}
with the binary operation concatenation.  The empty word is denoted
$e$.  The length of the word \eqref{jan24a} is $|w|=k$ (so $|e|=0$).
Let $S_m$ be the set of all words \eqref{jan24a} of length at
most~$m$.  The cardinality of $S_m$ is $\ell_m = 1 + d + d^2 + \cdots
+ d^m$.

We extend $S$ to a free group $G$.  We can think of the elements of
$G$ as words in $a_1,\dots,a_d,a_1^{-1},\dots,a_d^{-1}$, with two such
words identified if one can be obtained from the other by cancelling
adjacent terms of the form $a_j$ and $a_j^{-1}$.  The binary operation
in $G$ is also concatenation.  Words in $G$ of the form $h = v^{-1}w$
with $v,w \in S$ play a special role and are called
\textbf{hereditary}.  Notice that a hereditary word $h$ has many
representations $h = v^{-1}w$ with $v,w \in S$.  Let $H_m$ be the set
of hereditary words $h$ which have at least one representation in the
form $h = v^{-1}w$ with $v,w \in S_m$.

We can now introduce the noncommutative analogues of Laurent and
analytic polynomials.  A hereditary polynomial is a formal expression
\begin{equation}\label{jan24b}
  Q = \sum_{h \in H_m}  h \otimes Q_h ,
\end{equation}
where $Q_h\in\lg$ for all $h$.  Analytic polynomials are hereditary
polynomials of the special form
\begin{equation}\label{jan24c}
  P = \sum_{w \in S_m}  w \otimes P_w ,
\end{equation}
where $P_w\in\lg$ for all $w$.  The identity
\begin{equation*}
  Q=P^*P
\end{equation*}
is defined to mean that
\begin{equation*}
    Q_h = \sum_{\genfrac{}{}{0pt}{1}{v,w \in S_m}{h=v^{-1}w}}
                   P_v^* P_w ,
   \qquad h \in H_d.
\end{equation*}
 
Next we give meaning to the expressions $Q(U)$ and $P(U)$ for
hereditary and analytic polynomials \eqref{jan24b} and \eqref{jan24c}
and any tuple $U = (U_1,\dots,U_d)$ of unitary operators on $\eH$.
First define $U^w \in\lh$ for any $w \in S$ by writing $w$ in the form
\eqref{jan24a} and setting
\begin{equation*}
  U^w = U_{j_1}\cdots U_{j_k} .
\end{equation*}
By convention, $U^e = I$ is the identity operator on~$\eH$.  If $h \in
\eG$ is a hereditary word, set
\begin{equation*}
  U^h = (U^v)^* U^w
\end{equation*}
for any representation $h = v^{-1}w$ with $v,w \in S$; this definition
does not depend on the choice of representation.  Finally, define
$Q(U), P(U) \in \eL(\eH \otimes \eG)$ by
\begin{equation*}
  Q(U) = \sum_{h \in H_m}  U^h \otimes Q_h \,,
\qquad
  P(U) =\sum_{w \in S_m}  U^w \otimes P_w.
\end{equation*}
The reader is referred to, for example, Murphy \cite[\S6.3]{Murphy}
for the construction of tensor products of Hilbert spaces and
algebras, or Palmer, \cite[\S1.10]{Palmer1} for a more detailed
account.

\goodbreak
\begin{theorem}[McCullough \cite{MR1815959}]\label{jan24d}
  Let
  \begin{equation*}
    Q = \sum_{h \in H_m} h \otimes Q_h
  \end{equation*}
  be a hereditary polynomial with coefficients in $\lg$ such that
  $Q(U)) \ge 0$ for every tuple $U = (U_1,\dots,U_d)$ of unitary
  operators on~$\eH$.  Then for some $\ell \le \ell_m$, there exist
  analytic polynomials
\begin{equation*}
  P_j = \sum_{w \in S_m}  w \otimes P_{j,w} ,
\qquad
  j=1,\dots,\ell,
\end{equation*}
with coefficients in $\lg$ such that
 \begin{equation*}
   Q = P_1^*P_1 + \cdots + P_{\ell}^* P_{\ell}.
 \end{equation*}
 Moreover, for any tuple $U = (U_1,\dots,U_d)$ of unitary operators on
 $\eH$,
 \begin{equation*}
   Q(U) = P_1(U)^*P_1(U) + \cdots + P_{\ell}(U)^* P_{\ell}(U).
 \end{equation*}
 In these statements, when $\eG$ is infinite dimensional, we can
 choose~$\ell=1$.
\end{theorem}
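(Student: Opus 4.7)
The plan is to prove Theorem~\ref{jan24d} by a Hahn--Banach separation of two convex cones in the space of hereditary polynomials, with the separating functional producing, via a GNS construction and a free Sz.-Nagy dilation, a tuple of unitaries at which $Q$ fails to be positive. Assume first that $\dim \eG < \infty$, so that the space $\mathcal V$ of selfadjoint hereditary polynomials supported on $H_m$ is finite-dimensional. Inside $\mathcal V$, introduce the convex cones
\begin{equation*}
\mathcal C_{\mathrm{SOS}} = \Bigl\{\textstyle \sum_{j=1}^{\ell} P_j^* P_j : \ell \in \mathbb{N},\; P_j \text{ analytic, supported on } S_m \Bigr\}
\end{equation*}
and $\mathcal C_+ = \{Q \in \mathcal V : Q(U)\ge 0 \text{ for every unitary tuple } U \text{ on } \eH\}$. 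A Carath\'eodory count bounds the number of summands in any $\mathcal C_{\mathrm{SOS}}$-representation by $\ell_m = \dim \mathcal V$; the resulting bounded-rank set is the continuous image of a product of $\ell_m$ copies of the analytic polynomial space under a proper map (proper because $P_j^*P_j \ge 0$ makes cancellations between summands impossible), so $\mathcal C_{\mathrm{SOS}}$ is closed. Since $\mathcal C_{\mathrm{SOS}}\subseteq \mathcal C_+$ is immediate, the theorem reduces to the reverse inclusion.

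Suppose for contradiction that some $Q \in \mathcal C_+\setminus \mathcal C_{\mathrm{SOS}}$. By Hahn--Banach there is a selfadjoint linear functional $\varphi$ on $\mathcal V$ with $\varphi(Q) < 0$ and $\varphi(P^*P)\ge 0$ for every analytic $P$ supported on $S_m$. Use $\varphi$ to define a positive semidefinite sesquilinear form on analytic polynomials by $\langle P, P'\rangle_\varphi = \varphi((P')^*P)$, then quotient and complete to obtain a Hilbert space $\eK$ with a distinguished map $\iota\colon \eG \to \eK$ corresponding to the empty word. Because $a_j^{-1} a_j = e$ in the free group, $\varphi((a_j P)^*(a_j P)) = \varphi(P^*P)$ whenever $P$ is supported on $S_{m-1}$, so left multiplication by each generator $a_j$ extends to a partial isometry $V_j$ on $\eK$, isometric on the image of $S_{m-1}$-supported polynomials and zero on the orthogonal complement. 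The construction arranges that $\langle V^h \iota(g),\iota(g')\rangle_{\eK} = \varphi_h(g,g')$ for every $h\in H_m$ and $g,g' \in \eG$, where $\varphi_h$ denotes the Fourier coefficient of $\varphi$ at $h$.

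The main obstacle is the final step: dilate the partial isometries $V_1,\dots,V_d$ to unitaries $U_1,\dots,U_d$ on an enlargement $\widetilde\eK\supseteq \eK$ while preserving the inner products $\langle U^h \iota(g),\iota(g')\rangle$ for every $h \in H_m$. Here noncommutativity is an advantage rather than an obstacle: since $a_1,\dots,a_d$ satisfy no joint relations, each $V_j$ can be dilated independently by single-variable Sz.-Nagy theory on its own summand, the dilations then assembled on a common infinite-dimensional ambient space (this is where the infinite-dimensionality of $\eH$ enters). An induction on word length verifies that every hereditary word of length at most $2m$ evaluated at $(U_1,\dots,U_d)$ agrees on the range of $\iota$ with the corresponding word in $(V_1,\dots,V_d)$, requiring only the single-variable dilation identity at each stage. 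With $U_1,\dots,U_d$ in hand, the hypothesis $Q(U)\ge 0$ gives $\langle Q(U)\iota(g),\iota(g)\rangle \ge 0$ for each $g\in\eG$; summing over an orthonormal basis of $\eG$ recovers $\varphi(Q) \ge 0$, contradicting $\varphi(Q) < 0$. This establishes $\mathcal C_+ = \mathcal C_{\mathrm{SOS}}$ and the factorization $Q = \sum_{j=1}^{\ell_m} P_j^* P_j$. The case $\dim \eG = \infty$ is reached by a stacking argument reducing to finite-dimensional subspaces of $\eG$, and the sharpening to $\ell = 1$ in that case is obtained by collapsing the $\ell_m$ analytic polynomials into one row-valued polynomial and applying a unitary identification $\eG^{\ell_m}\cong \eG$.
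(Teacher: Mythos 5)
The paper states Theorem~\ref{jan24d} without proof, attributing it to McCullough \cite{MR1815959}; what the paper itself proves by a GNS/separation argument is the related Theorem~\ref{jan12a}, which handles only \emph{strictly} positive polynomials and relies on an archimedean cone precisely to sidestep the cone-closure question. Your proposal targets the full nonstrict statement, which is the harder result and does require closedness of the degree-truncated sum-of-squares cone. The global scheme you use --- closed cone plus Hahn--Banach separation, a GNS construction producing length-truncated partial isometries, and a free unitary dilation that need only reproduce words of length at most $m$ applied to the range of $\iota$ --- is the right one, and the key observation making the dilation painless (each application of a generator stays inside the initial space of the corresponding partial isometry) is correctly identified.

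Two places need repair and one is a genuine omission. First, the Carath\'eodory count is wrong as stated: $\ell_m=|S_m|$ is much smaller than $\dim\mathcal V$, which is on the order of $|H_m|\cdot(\dim\eG)^2$; the correct source of the bound $\ell\le\ell_m$ is a block Cholesky factorization of an $\ell_m\times\ell_m$ positive block Gram matrix $(A_{vw})_{v,w\in S_m}$ with $Q_h=\sum_{v^{-1}w=h}A_{vw}$. This error does not damage closedness, since any finite bound on the number of summands makes the properness argument work. Second, the quantity $\varphi\bigl((P')^*P\bigr)$ is not yet defined: $\varphi$ lives on the \emph{real} vector space $\mathcal V$ of \emph{selfadjoint} hereditary polynomials, while $(P')^*P$ has operator coefficients and is not selfadjoint; one must complexify $\varphi$, pair it against $\lg$-valued coefficients via the trace, and check (minding a transpose in the identification) that positivity of $\varphi$ on squares is exactly positive semidefiniteness of the resulting Gram form on $\bigoplus_{w\in S_m}\eG$. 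The genuine gap is the case $\dim\eG=\infty$, which you dismiss with the word ``stacking.'' There the truncated SOS cone is no longer norm-closed, the finite-dimensional separation above does not apply, and one must instead compress $Q$ to an increasing net of finite-dimensional coefficient spaces, invoke the finite case, and extract a limiting factorization from a uniform norm bound via a weak-$*$ compactness argument --- none of which appears in the write-up. The final step, collapsing $\ell_m$ analytic polynomials to one via $\eG^{\ell_m}\cong\eG$, is fine once the factorization exists.
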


As noted by McCullough, when $d=1$, Theorem \ref{jan24d} gives a
weaker version of Theorem 2.1.  However, Theorem 2.1 can be deduced
from this by a judicious use of Beurling's theorem and an inner-outer
factorization.

McCullough's theorem uses one of many possible choices of
noncommutative spaces on which some form of trigonometric polynomials
can be defined.  We place this, along with the commutative versions,
within a general framework, which we now explain.

The complex scalar-valued trigonometric polynomials in $d$ variables
form a unital $*$-algebra $\eP$, the involution taking $z^n$ to
$z^{-n}$, where for $n = (n_1,\ldots, n_d)$, $-n = (-n_1,\ldots,
-n_d)$.  If instead the coefficients are in the algebra $\eC = \lg$
for some Hilbert space $\eG$, then the unital involutive algebra of
trigonometric polynomials with coefficients in $\eC$ is
$\eP\otimes\eC$.  The unit is $1\otimes 1$.  A representation of
$\eP\otimes\eC$ is a unital algebra $*$-homomorphism from
$\eP\otimes\eC$ into $\lh$ for a Hilbert space $\eH$.  The key thing
here is that $z_1, \ldots , z_d$ generate $\eC$, and so assuming we do
not mess with the coefficient space, a representation $\pi$ is
determined by specifying $\pi(z_k)$, $k=1,\ldots, d$.

First note that since $z_k^*z_k = 1$, $\pi(z_k)$ is isometric, and
since $z_k^* = z_k^{-1}$, we then have that $\pi(z_k)$ is unitary.
Assuming the variables commute, the $z_k$s generate a commutative
group $G$ which we can identify with $\mathbb Z^d$ under addition, and
the irreducible representations of commutative groups are one
dimensional.  This essentially follows from the spectral theory for
normal operators (see, for example, Edwards \cite[p.\ 718]{Edwards}).
However, the one-dimensional representations are point evaluations on
$\mathbb T^d$.  Discrete groups with the discrete topology are
examples of locally compact groups.  Group representations of locally
compact groups extend naturally to the algebraic group algebra, which
in this case is $\eP$, and then on to the algebra $\eP\otimes\eC$ by
tensoring with the identity representation of $\eC$.  So a seemingly
more complex way of stating that a commutative trigonometric
polynomial $P$ in several variables is positive / strictly positive is
to say that for each (topologically) irreducible unitary
representation $\pi$ of $G$, the extension of $\pi$ to a unital
$*$-representation of the algebra $\eP\otimes\eC$, also called $\pi$,
has the property that $\pi(P) \geq 0$ / $\pi(P) > 0$.  By the way,
since $\mathbb T^d$ is compact, $\pi(P) > 0$ implies the existence of
some $\epsilon > 0$ such that $\pi(P - \epsilon 1\otimes 1) = \pi(P) -
\epsilon 1 \geq 0$.

What is gained through this perspective is that we may now define
noncommutative trigonometric polynomials over a finitely generated
discrete (so locally compact) group $G$ in precisely the same manner.
These are the elements of the algebraic group algebra $\eP$ generated
by $G$; that is, formal complex linear combinations of elements of $G$
endowed with pointwise addition and a convolution product (see Palmer
\cite[section~1.9]{Palmer1}).  Then a trigonometric polynomial in
$\eP\otimes\eC$ is formally a finite sum over $G$ of the form $P =
\sum_g g\otimes P_g$ where $P_g\in \eC$ for all $g$.

We also introduce an involution by setting $g^* = g^{-1}$ for $g\in
G$.  A trigonometric polynomial $P$ is \textbf{self adjoint} if for
all $g$, $P_{g^*} = P_g^*$.  There is an order structure on
selfadjoint elements defined by saying that a selfadjoint polynomial
$P$ is \textbf{positive / strictly positive} if for every irreducible
unital $*$-representation $\pi$ of $G$, the extension as above of
$\pi$ to the algebra $\eP\otimes\eC$ (again called $\pi$), satisfies
$\pi(P) \geq 0$ / $\pi(P) > 0$; where by $\pi(P) > 0$ we mean that
there exists some $\epsilon > 0$ independent of $\pi$ such that $\pi(P
- \epsilon (1\otimes 1)) \geq 0$.  Letting $\Omega$ represent the set
of such irreducible representations, we can in a manner suggestive of
the Gel'fand transform define $\hat{P} (\pi) = \pi(P)$, and in this
way think of $\Omega$ as a sort of noncommutative space on which our
polynomial is defined.  The Gel'fand-Ra{\u\i}kov theorem (see, for
example, Palmer \cite[Theorem 12.4.6]{Palmer2}) ensures the existence
of sufficiently many irreducible representations to separate $G$, so
in particular, $\Omega \neq\emptyset$.

For a finitely generated discrete group $G$ with generators $\{a_1,
\ldots,a_d\}$, let $S$ be a fixed unital subsemigroup of $G$
containing the generators.  The most interesting case is when $S$ is
the subsemigroup generated by $e$ (the group identity) and $\{a_1,
\ldots,a_d\}$.  As an example of this, if $G$ is the noncommutative
free group in $d$ generators, then the unital subsemigroup generated
by $\{a_1, \ldots,a_d\}$ consists of group elements $w$ of the form
$e$ (for the empty word) and those which are an arbitrary finite
product of positive powers of the generators, as in \eqref{jan24a}.

We also need to address the issue of what should play the role of
Laurent and analytic trigonometric polynomials in the noncommutative
setting.  The \textbf{hereditary} trigonometric polynomials are
defined as those polynomials of the form $P = \sum_j w_{j1}^*
w_{j2}\otimes P_j$, where $w_{j1}, w_{j2} \in S$.  We think of these
as the Laurent polynomials.  Trigonometric polynomials over $S$ are
referred to as \textbf{analytic} polynomials.  The \textbf{square} of
an analytic polynomial $Q$ is the hereditary trigonometric polynomial
$Q^*Q$.  Squares are easily seen to be positive.  As a weak analogue
of the Fej\'er-Riesz theorem, we prove a partial converse below.

We refer to those hereditary polynomials which are selfadjoint as real
hereditary polynomials, and denote the set of such polynomials by $H$.
While these polynomials do not form an algebra, they are clearly a
vector space.  Those which are finite sums of squares form a cone $C$
in $H$ (that is, $C$ is closed under sums and positive scalar
multiplication).  Any real polynomial is the sum of terms of the form
$1\otimes A$ or $w_2^*w_1\otimes B + w_1^*w_2\otimes B^*$, where
$w_1, w_2\in S$ and $A$ is selfadjoint.  The first of these is
obviously the difference of squares.  Using $w^*w = 1$ for any $w\in
G$, we also have
\begin{equation*}
  w_2^*w_1\otimes B +  w_1^*w_2 \otimes B^* = (w_1\otimes B + 
  w_2\otimes 1)^*(w_1\otimes B + w_2\otimes 1) - 1\otimes (1+B^*B).
\end{equation*}
Hence $H = C - C$.

For $A, B\in \lh$ and $w_1,w_2\in S$,
\begin{equation*}
  \begin{split}
    0 & \leq (w_1\otimes A + w_2\otimes B)^*(w_1\otimes A + 
    w_2\otimes B) \\
    & \leq (w_1\otimes A + w_2\otimes B)^*(w_1\otimes A + 
    w_2\otimes B) \\
   &\hskip1cm  + (w_1\otimes A - w_2\otimes B)^*(w_1\otimes A -
    w_2\otimes B) \\
    &= 2(1\otimes A^*A + 1\otimes B^*B ) \\
    & \leq (\|A\|^2 + \|B\|^2) (1\otimes 1).
  \end{split}
\end{equation*}
Applying this iteratively, we see that for any $P\in H$, there is some
constant $0\leq \alpha < \infty$ such that $\alpha 1 \pm P \in C$.  In
other words, the cone $C$ is \textbf{archimedean}.  In particular,
$1\otimes 1$ is in the algebraic interior of $C$, meaning that if
$P\in H$, then there is some $0 < t_0 \leq 1$ such that for all
$0<t<t_0$, $t(1\otimes 1) + (1-t)P \in C$.

\begin{theorem}\label{jan12a}
  Let $G$ be a finitely generated discrete group, $P$ a strictly
  positive trigonometric polynomial over $G$ with coefficients in
  $\lg$.  Then $P$ is a sum of squares of analytic polynomials.
\end{theorem}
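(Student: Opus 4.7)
The strategy is a classical archimedean Positivstellensatz argument: assume $P\notin C$ for contradiction, separate $P$ from $C$ by a linear functional using Hahn--Banach, promote that functional to a unitary representation of $G$ via a GNS-plus-dilation construction, and derive a contradiction from the strict positivity of $P$. Since $C$ is a convex cone in the real vector space $H$ and $1\otimes 1$ lies in its algebraic interior, an Eidelheit-type separation (no topology is needed, only the archimedean property) produces a nonzero $\mathbb R$-linear functional $\lambda\colon H\to\mathbb R$ with $\lambda|_C\geq 0$ and $\lambda(P)\leq 0$. The archimedean property forces $\lambda(1\otimes 1)>0$; I rescale to $\lambda(1\otimes 1)=1$. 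Extend $\mathbb C$-linearly to $\tilde\lambda\colon\eP\otimes\eC\to\mathbb C$ by splitting each element into self-adjoint real and imaginary parts, obtaining $\tilde\lambda(X^*)=\overline{\tilde\lambda(X)}$ and $\tilde\lambda(A^*A)\geq 0$ for every analytic polynomial $A$.

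Next I perform a GNS construction on $\eG$-valued analytic polynomials $V=\mathbb C[S]\otimes\eG$. For $\xi=\sum_w w\otimes v_w$ and $\eta=\sum_w w\otimes u_w$, set
\[
\langle\xi,\eta\rangle_\lambda
\;=\;\sum_{w_1,w_2\in S}\tilde\lambda\bigl(w_2^{-1}w_1\otimes|u_{w_2}\rangle\langle v_{w_1}|\bigr).
\]
Positivity of this form is verified by a rank-one trick: fixing a unit vector $\phi\in\eG$, the analytic polynomial $A_\xi=\sum_w w\otimes|\phi\rangle\langle v_w|$ satisfies $A_\xi^*A_\xi=\sum_{w_1,w_2}w_1^{-1}w_2\otimes|v_{w_1}\rangle\langle v_{w_2}|$, so $\|\xi\|_\lambda^2=\tilde\lambda(A_\xi^*A_\xi)\geq 0$ because $A_\xi^*A_\xi\in C$. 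Quotient by the null space and complete to obtain a Hilbert space $\eH_\lambda$; since $a_i^*a_i=1\otimes 1$ in $\eP\otimes\eC$, left multiplication by each generator $a_i$ descends to an isometry $M_{a_i}$ on $\eH_\lambda$.

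The main obstacle is then to dilate the tuple of isometries $\{M_{a_i}\}_{i=1}^d$ to a unitary $d$-tuple $\{U_i\}_{i=1}^d$ on some separable $\eH\supseteq\eH_\lambda$ so that the assignment $a_i\mapsto U_i$ extends to a genuine unitary representation $\pi$ of the full group $G$, in particular respecting every defining relation of $G$. For $G=F_d$ this is Popescu's noncommutative isometric dilation, applied with no algebraic side conditions; for a general finitely generated discrete $G$ one proceeds by iteratively adjoining subspaces to $\eH_\lambda$, at each stage forcing surjectivity of a new $M_{a_i}$ while preserving any identity that holds amongst words in the generators, using that $\eH$ may be taken infinite dimensional. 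Once $\pi$ is in hand, tensor with the identity representation of $\eC=\eL(\eG)$ on $\eG$ to obtain $\pi\otimes\mathrm{id}$ on $\eH\otimes\eG$; a direct computation with the canonical GNS cyclic vector $\xi$ yields $\langle(\pi\otimes\mathrm{id})(P)\xi,\xi\rangle=\tilde\lambda(P)=\lambda(P)\leq 0$, while strict positivity of $P$ gives $\pi(P)\geq\epsilon(I\otimes I)$ and hence $\langle\pi(P)\xi,\xi\rangle\geq\epsilon\|\xi\|^2>0$, the required contradiction.
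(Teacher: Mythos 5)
Your overall plan---argue by contradiction, separate $P$ from the cone $C$ by a real linear functional $\lambda$ using an Eidelheit-type theorem, normalize $\lambda$, perform a GNS construction, and contradict strict positivity---is the same as the paper's (which invokes the Edelheit--Kakutani theorem and separates $P-\epsilon(1\otimes 1)$ from $C$). Working on the vector-valued module $\mathbb C[S]\otimes\eG$ rather than on the hereditary polynomials $H$ is a harmless variant, and your rank-one trick correctly reduces positivity of the form to $\lambda\vert_C\geq 0$.

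The genuine gap is the dilation step. Because you build the pre-Hilbert space from the \emph{analytic} side $\mathbb C[S]\otimes\eG$, left multiplication is defined only for words in $S$, so you obtain a tuple of \emph{isometries} $M_{a_i}$; extending $a_i\mapsto M_{a_i}$ to $G$ requires passing to unitaries, and this is exactly where your argument stops being a proof. Your appeal to Popescu's dilation is valid when $G=F_d$, since there are no relations to preserve; but for a general finitely generated discrete group your plan to ``iteratively adjoin subspaces, forcing surjectivity of each $M_{a_i}$ while preserving any identity that holds among words in the generators'' is not a theorem and is not even a construction: in the presence of relations among the $a_i$ there is no general principle guaranteeing that an $S$-representation by isometries extends to a $G$-representation by unitaries, and you make no use of the archimedean or separation structure at this point to force it. The paper sidesteps the problem by building the Hilbert space from the \emph{hereditary} polynomials: since left multiplication by either $a_i$ or $a_i^{-1}$ sends a hereditary word $v^{-1}w$ to another hereditary word ($(va_i)^{-1}w$ or $(va_i^{-1})\ldots$, etc.), both $\pi(a_i)$ and $\pi(a_i^{-1})$ are realized as left regular operators on the same space, and $\pi(a_i)$ has an explicit two-sided inverse, so unitarity is immediate without any dilation. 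The paper then passes to an irreducible summand via a direct integral decomposition, giving the contradiction; your contradiction computation with the cyclic vector is fine once you have a unitary representation, but you do not have one except when $G=F_d$.
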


\begin{proof}
  The proof uses a standard GNS construction and separation argument.
  Suppose that for some $\epsilon > 0$, $P - \epsilon (1\otimes 1)
  \geq 0$ but that $P\notin C$.  Since $C$ has nonempty algebraic
  interior, it follows from the Edelheit-Kakutani theorem\footnote{
    \cite{Holmes} Holmes, Corollary, \S4B.  \textsl{Let $A$ and $B$ be
      nonempty convex subsets of $X$, and assume the algebraic
      interior of $A$, $\mathrm{cor}(A)$ is nonempty.  Then $A$ and
      $B$ can be separated if and only if $\mathrm{cor}(A)\cap B =
      \emptyset$.}} that there is a nonconstant linear functional
  $\lambda: H \to \mathbb R$ such that $\lambda(C) \geq 0$ and
  $\lambda(P) \leq 0$.  Since $\lambda$ is nonzero, there is some
  $R\in H$ with $\lambda(R) > 0$, and so since the cone $C$ is
  archimedean, there exists $\alpha > 0$ such that $\alpha (1\otimes
  1) - R \in C$.  From this we see that $\lambda (1 \otimes 1) > 0$,
  and so by scaling, we may assume $\lambda(1\otimes 1) = 1$.

  We next define a nontrivial scalar product on $H$ by setting
  \begin{equation*}
    \ip{w_1\otimes A}{w_2\otimes B} = \lambda(w_2^*w_1\otimes B^*A)
  \end{equation*}
  and extending linearly to all of $H$.  It is easily checked that
  this satisfies all of the properties of an inner product, except
  that $\ip{w\otimes A}{w\otimes A} = 0$ may not necessarily imply
  that $w\otimes A = 0$.  Even so, such scalar products satisfy the
  Cauchy-Schwarz inequality, and so $N = \{w\otimes A : \ip{w\otimes A
  }{w\otimes A} = 0\}$ is a vector subspace of $H$.  Therefore this
  scalar product induces an inner product on $H/N$, and the completion
  $\eH$ of $H/N$ with respect to the associated norm makes $H/N$ into
  a Hilbert space.

  We next define a representation $\pi: H \to \lh$ by the left regular
  representation; that is, $\pi(P)[w\otimes A] = [P(w\otimes A)]$,
  where $[\,\cdot\,]$ indicates an equivalence class in $H/N$.  Since
  $P\geq \epsilon (1\otimes 1) \geq 0$ for some $\epsilon > 0$,
  $P-\epsilon/2 (1\otimes 1) > 0$.  Suppose that $P\notin C$.  Then
  \begin{equation*}
    \lambda((P-\epsilon/2 (1\otimes 1)) + \epsilon/2 (1\otimes 1)) =
    \lambda(P-\epsilon/2 (1\otimes 1)) + \epsilon/2 \leq 0.
  \end{equation*}
  Hence
  \begin{equation*}
    \ip{\pi(P-\epsilon/2 (1\otimes 1))[1\otimes 1]}{[1\otimes 1]} \leq
    -\epsilon/2,
  \end{equation*}
  and so $\pi(P-\epsilon/2 (1\otimes 1)) \not\geq 0$.  The
  representation $\pi$ obviously induces a unitary representation of
  $G$ via $\pi(a_i) = \pi(a_i\otimes 1)$, where $a_i$ is a generator
  of $G$.  The (irreducible) representations of $G$ are in bijective
  correspondence with the essential unital $*$-representations of the
  group $C^*$-algebra $C^*(G)$ (Palmer,
  \cite[Theorem~12.4.1]{Palmer2}), which then restrict back to
  representations of $H$.  Since unitary representations of $G$ are
  direct integrals of irreducible unitary representations (see, for
  example, Palmer, \cite[p.~1386]{Palmer2}), there is an irreducible
  unitary representation $\pi'$ of $G$ such that the corresponding
  representation of $H$ has the property that $\pi'(P-\epsilon/2
  (1\otimes 1)) \not\geq 0$, giving a contradiction.
\end{proof}

The above could equally well have been derived using any $C^*$-algebra
in the place of $\lh$.  One could also further generalize to
non-discrete locally compact groups, replacing the trigonometric
polynomials by functions of compact support.

We obtain Theorem~\ref{jan16a} as a corollary if we take $G$ to be
the free group in $d$ commuting letters.  On the other hand, if $G$ is
the noncommutative free group on $d$ letters, it is again
straightforward to specify the irreducible representations of $G$.
These take the generators $(a_1,\ldots,a_d)$ to irreducible $d$-tuples
$(U_1,\ldots,U_d)$ of (noncommuting) unitary operators, yielding a
weak form of McCullough's theorem.

As mentioned earlier, it is known by results of Scheiderer
\cite{Scheiderer1} that when $G$ is the free group in $d$
\textit{commuting} letters, $d\geq 3$, there are positive polynomials
which cannot be expressed as sums of squares of analytic polynomials,
so no statement along the lines of Theorem~\ref{jan24d} can be true
for trigonometric polynomials if it is to hold for all finitely
generated discrete groups.  Just what can be said in various special
cases is still largely unexplored.

\goodbreak
\appendix
\section{Schur complements}\label{S:complement}

We prove the existence and uniqueness of Schur complements for Hilbert
space operators as required in Definition~\ref{nov4e}.

\begin{lemma}\label{sep27b}
  Let $T\in\lh$, where $\eH$ is a Hilbert space.  Let $\eK$ be a
  closed subspace of~$\eH$, and write
\begin{equation*}
  T = 
  \begin{pmatrix}
    A & B^* \\ B & C
  \end{pmatrix} \colon \eK \oplus \eK^\perp \to \eK \oplus \eK^\perp .
\end{equation*}
Then $T \ge 0$ if and only if $A \ge 0$, $C \ge 0$, and $B = C^\frac12
G A^\frac12$ for some contraction $G \in\eL(\eK,\eK^\perp)$.  The
operator $G$ can be chosen so that it maps $\ranclosure A$ into
$\ranclosure C$ and is zero on the orthogonal complement of
$\,\ranclosure A$, and then it is unique.
\end{lemma}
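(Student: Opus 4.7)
The plan is to verify the easy implications first, then construct $G$ via a Douglas-style range inclusion argument, and finally handle uniqueness and the converse by a direct congruence computation.

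\textbf{Necessity.} Assume $T\ge 0$. Testing against vectors $x\oplus 0$ and $0\oplus y$ immediately gives $A\ge 0$ and $C\ge 0$, so $A^{1/2}$ and $C^{1/2}$ exist. For arbitrary $x\in\eK$ and $y\in\eK^\perp$, evaluate
\begin{equation*}
0 \le \ip{T(x\oplus ty)}{x\oplus ty} = \ip{Ax}{x} + 2t\,\real\ip{Bx}{y} + t^2\ip{Cy}{y}, \qquad t\in\mathbb R.
\end{equation*}
Replacing $y$ by $e^{i\theta}y$ to make $\ip{Bx}{y}$ real and nonnegative and then examining the discriminant of this quadratic in $t$ yields
\begin{equation*}
|\ip{Bx}{y}|^2 \;\le\; \|A^{1/2}x\|^2\,\|C^{1/2}y\|^2.
\end{equation*}

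\textbf{Construction of $G$.} The key step is to turn this bilinear inequality into a contraction via Riesz representation. Note first that if $A^{1/2}x=0$ then $\ip{Bx}{y}=0$ for every $y$, so $Bx=0$; likewise $C^{1/2}y=0$ forces $\ip{Bx}{y}=0$ for every $x$. Hence the prescription $G_0(A^{1/2}x)(C^{1/2}y):=\ip{Bx}{y}$ descends to a well-defined sesquilinear form on $\ranclosure A^{1/2}\times\ranclosure C^{1/2}=\ranclosure A\times\ranclosure C$ with bound $1$. Applying Riesz to each fixed $x$ produces a unique vector $w_x\in\ranclosure C$ with $\ip{w_x}{C^{1/2}y}=\ip{Bx}{y}$; setting $G(A^{1/2}x):=w_x$ gives a linear map on $\ranclosure A$ into $\ranclosure C$ with norm $\le 1$. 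Extending $G$ by zero on $(\ranclosure A)^\perp$ yields the desired contraction in $\eL(\eK,\eK^\perp)$, and unwinding the definitions shows $C^{1/2}GA^{1/2}=B$.

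\textbf{Uniqueness of the normalized $G$.} If $G$ and $G'$ both satisfy the range and vanishing conditions and $C^{1/2}(G-G')A^{1/2}=0$, then $(G-G')\,\ranclosure A \subseteq \ker C^{1/2} = (\ranclosure C)^\perp$; combined with $\ran(G-G')\subseteq\ranclosure C$, this forces $G-G'=0$ on $\ranclosure A$, and hence everywhere.

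\textbf{Sufficiency.} Given $A,C\ge 0$ and $B=C^{1/2}GA^{1/2}$ with $\|G\|\le 1$, a direct block computation gives the congruence
\begin{equation*}
\begin{pmatrix} A & B^* \\ B & C \end{pmatrix}
=
\begin{pmatrix} A^{1/2} & 0 \\ 0 & C^{1/2} \end{pmatrix}
\begin{pmatrix} I & G^* \\ G & I \end{pmatrix}
\begin{pmatrix} A^{1/2} & 0 \\ 0 & C^{1/2} \end{pmatrix}.
\end{equation*}
Since $\|G\|\le 1$, the middle factor is nonnegative (for instance, it equals $\begin{pmatrix} I \\ G \end{pmatrix}\begin{pmatrix} I & G^* \end{pmatrix} + \begin{pmatrix} 0 & 0 \\ 0 & I-GG^* \end{pmatrix}$), so $T\ge 0$.

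The main obstacle is the well-definedness and contractivity of $G$ in the necessity step; once the kernel inclusions $\ker A^{1/2}\subseteq\ker B$ and $\ker C^{1/2}\subseteq\ker B^*$ are extracted from the fundamental inequality, the rest is bookkeeping.
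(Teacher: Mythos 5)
Your proof is correct, but the necessity half takes a genuinely different route from the paper. The paper works directly with the square root of $T$: writing $N = T^{1/2}$ with block rows $N_1, N_2$ gives $A = N_1 N_1^*$, $C = N_2 N_2^*$, $B = N_2 N_1^*$; factoring $N_1^* = V_1 A^{1/2}$ and $N_2^* = V_2 C^{1/2}$ through partial isometries with initial spaces $\ranclosure A$ and $\ranclosure C$ then produces $G = V_2^* V_1$ in one stroke, with the range and kernel normalization built in. You instead extract the Cauchy--Schwarz type bound $|\ip{Bx}{y}| \le \|A^{1/2}x\|\,\|C^{1/2}y\|$ from the discriminant of the quadratic form and invoke Riesz representation to manufacture $G$ on $\ranclosure A$. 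The paper's argument is shorter once one thinks to split $T^{1/2}$, and the normalization properties of $G$ fall out automatically from the partial-isometry picture; your argument is more elementary in that it never takes a square root of the full operator $T$ (only of $A$ and $C$) and makes the well-definedness and contractivity of $G$ visible by hand from the underlying bilinear inequality. For sufficiency, your congruence of $T$ through $\diag(A^{1/2}, C^{1/2})$ with a nonnegative middle factor is a standard rearrangement of the paper's explicit lower-times-upper factorization $T = LL^*$; they are interchangeable, and both rest on $\|G\|\le 1$. The uniqueness argument is essentially the one the paper leaves implicit.
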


\begin{proof}
  If $B = C^\frac12 G A^\frac12$ where $G \in\eL(\eK,\eK^\perp)$ is a
  contraction, then
  \begin{equation*}
    T =   \begin{pmatrix}
  A^\frac12 & 0 \\
  C^\frac12 G & C^\frac12 (I-GG^*)^\frac12
  \end{pmatrix}
  \begin{pmatrix}
    A^\frac12 & G^* C^\frac12 \\
    0 & (I-GG^*)^\frac12 C^\frac12
  \end{pmatrix} \ge 0 .
  \end{equation*}
  Conversely, if $T \ge 0$, it is trivial that $A \ge 0$ and $C \ge
  0$.  Set
  \begin{equation*}
    N = T^\frac12 = 
    \begin{pmatrix}
      N_1 \\ N_2
    \end{pmatrix} \colon \eH \to \eH \oplus \eK .
  \end{equation*}
  Then $A = N_1N_1^*$ and $C=N_2N_2^*$, and so there exist partial
  isometries $V_1 \in \eL(\eK,\eH)$ and $V_2 \in \eL(\eK^\perp,\eH)$
  with initial spaces $\ranclosure A$ and $\ranclosure C$ such that
  $N_1^* = V_1 A^\frac12$ and $N_2^* = V_2 C^\frac12$.  Thus $B =
  N_2N_1^* = C^\frac12 G A^\frac12$, where $G = V_2^*V_1$ is a
  contraction.  By construction $G$ has the properties in the last
  statement, and clearly such an operator is unique.
\end{proof}

\begin{lemma}\label{sep26c}
  Let $\eH$ be a Hilbert space, and suppose $T\in\lh$, $T \ge 0$.  Let
  $\eK$ be a closed subspace of~$\eH$, and write
  \begin{equation*}
    T = 
     \begin{pmatrix}
       A & B^* \\ B & C
     \end{pmatrix} \colon \eK \oplus \eK^\perp \to \eK \oplus \eK^\perp .
  \end{equation*}
  Then there is a largest operator $S \ge 0$ in $\lk$ such that
  \begin{equation}\label{sep27c}
    \begin{pmatrix}
    A -S & B^* \\ B & C
  \end{pmatrix}  \ge 0.
  \end{equation}
  It is given by $S = A^\frac12 (I - G^*G) A^\frac12$, where $G \in
  \eL(\eK,\eK^\perp)$ is a contraction which maps $\ranclosure A$ into
  $\ranclosure C$ and is zero on the orthogonal complement of
  $\ranclosure A$.
\end{lemma}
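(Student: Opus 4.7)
The plan is to lean on Lemma~\ref{sep27b}, which already furnishes the canonical contraction $G\in\eL(\eK,\eK^\perp)$ with $B = C^\frac12 G A^\frac12$, mapping $\ranclosure A$ into $\ranclosure C$ and vanishing on the orthogonal complement of $\ranclosure A$. I would take $S = A^\frac12(I-G^*G)A^\frac12$ as the candidate; that $S\ge 0$ is immediate from $\|G\|\le 1$. To verify admissibility, that is, that the block matrix in \eqref{sep27c} is nonnegative, the key observation is
\begin{equation*}
  A - S = A^\frac12 G^*G A^\frac12 = (GA^\frac12)^*(GA^\frac12),
\end{equation*}
so that $(A-S)^\frac12 = |GA^\frac12|$. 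Using the polar decomposition $GA^\frac12 = W(A-S)^\frac12$ with $W$ a partial isometry, one rewrites $B = C^\frac12 W (A-S)^\frac12$, and a second application of Lemma~\ref{sep27b}, with $A$ replaced by $A-S$, delivers the required inequality.

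For maximality, suppose $\widetilde S \ge 0$ in $\lk$ also makes the analogous block matrix nonnegative. My plan is to convert this to a quadratic-form inequality: for every $f\in\eK$ and $g\in\eK^\perp$,
\begin{equation*}
  \ip{\widetilde S f}{f} \le \ip{Af}{f} + 2\real\ip{Bf}{g} + \ip{Cg}{g}.
\end{equation*}
Inserting the factorization $B = C^\frac12 G A^\frac12$ and completing the square in $g$ yields
\begin{equation*}
  \ip{Cg}{g} + 2\real\ip{Bf}{g} = \|C^\frac12 g + GA^\frac12 f\|^2 - \|GA^\frac12 f\|^2 ,
\end{equation*}
whose infimum over $g$ is $-\|GA^\frac12 f\|^2$. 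What remains is $\ip{\widetilde S f}{f} \le \ip{A^\frac12(I-G^*G)A^\frac12 f}{f} = \ip{Sf}{f}$, hence $\widetilde S \le S$.

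The main obstacle I anticipate is precisely this infimum step: it works only because the \emph{canonical} $G$ supplied by Lemma~\ref{sep27b} satisfies $\ran G \subseteq \ranclosure C = \ranclosure C^\frac12$, so that the first term in the completed square can be driven to zero by a sequence of $g$'s with $C^\frac12 g \to -GA^\frac12 f$. Without this range property, the completed square could not be killed and the maximality bound would fail to close. Once that point is secured, everything else is routine block-matrix algebra together with the polar decomposition.
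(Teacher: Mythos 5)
Your proof is correct, and the maximality half is a genuinely different argument from the paper's. The paper proves maximality by writing any admissible $X$ as $X=A^\frac12 K A^\frac12$ with $0\le K\le I$ supported on $\ranclosure A$, factoring the block matrix through $\diag(A^\frac12, C^\frac12)$ to reduce it to $\left(\begin{smallmatrix} I-K & G^* \\ G & I\end{smallmatrix}\right)\ge 0$, and then applying Lemma~\ref{sep27b} a second time to extract $G^*G\le I-K$, hence $X\le S$. You instead work directly at the level of quadratic forms, complete the square in $g$, and pass to the infimum over $g$; this is the classical scalar/finite-dimensional computation carried over verbatim, and it is arguably more transparent. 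You are exactly right that the step which makes it close is $\ran G\subseteq\ranclosure C=\ranclosure C^\frac12$, which lets $C^\frac12 g$ approximate $-GA^\frac12 f$; the paper needs the corresponding range condition too, but it surfaces in the passage from $\diag(A^\frac12,C^\frac12)\,M\,\diag(A^\frac12,C^\frac12)\ge 0$ to $M\ge 0$ rather than in an infimum. For admissibility your route through the polar decomposition of $GA^\frac12$ and a second invocation of Lemma~\ref{sep27b} is fine, but it is slightly roundabout compared with the paper, which simply exhibits $\left(\begin{smallmatrix} A-S & B^* \\ B & C\end{smallmatrix}\right)$ as $\left(\begin{smallmatrix} A^\frac12 G^* \\ C^\frac12\end{smallmatrix}\right)\left(\begin{smallmatrix} GA^\frac12 & C^\frac12\end{smallmatrix}\right)$. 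In short: the paper's argument is shorter and stays inside Lemma~\ref{sep27b}'s machinery, while yours is more elementary and makes the extremal property visible at the quadratic-form level; both are sound.
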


\begin{proof}
  By Lemma \ref{sep27b}, we may define $S= A^\frac12 (I - G^*G)
  A^\frac12$ with $G$ as in the last statement of the lemma.
  Then
  \begin{equation*}
      \begin{pmatrix}
      A-S & B^* \\ B & C
    \end{pmatrix}
=
  \begin{pmatrix}
  A^\frac12 G^*G A^\frac12 & A^\frac12 G^* C^\frac12 \\
  C^\frac12 GA^\frac12 & C
  \end{pmatrix}
=
  \begin{pmatrix}
    A^\frac12 G^* \\ C^\frac12
  \end{pmatrix}
  \begin{pmatrix}
    GA^\frac12 & C^\frac12
  \end{pmatrix}
\ge 0.
  \end{equation*}
  Consider any $X \ge 0$ in $\lk$ such that 
  \begin{equation*}
    \begin{pmatrix}
      A-X & B^* \\ B & C
    \end{pmatrix} \ge 0.
  \end{equation*}
  Since $A \ge X \ge 0$, we can write $X = A^\frac12 K A^\frac12$
  where $K\in\lk$ and $0 \le K \le I$.  We can choose $K$ so that it
  maps $\ranclosure A$ into itself and is zero on $(\ranclosure
  A)^\perp$.  Then
  \begin{align*}
    \begin{pmatrix}
      A-X & B^* \\ B & C
    \end{pmatrix}
&=
  \begin{pmatrix}
    A - A^\frac12 K A^\frac12 & A^\frac12 G^* C^\frac12 \\
    C^\frac12 G A^\frac12 & C
  \end{pmatrix} \\
&=
  \begin{pmatrix}
    A^\frac12 & 0 \\ 0 & C^\frac12
  \end{pmatrix}
  \begin{pmatrix}
    I-K &G^* \\ G & I
  \end{pmatrix}
  \begin{pmatrix}
    A^\frac12 & 0 \\ 0 & C^\frac12    
  \end{pmatrix} .
  \end{align*}
  By our choices $G$ and $K$, we deduce that
  \begin{equation*}
    \begin{pmatrix} I-K &G^* \\ G & I \end{pmatrix} \ge 0.
  \end{equation*}
  By Lemma \ref{sep27b}, $G = G_1 (I-K)^\frac12$ where $G_1 \in
  \eL(\eK,\eK^\perp)$ is a contraction.  Therefore $G^*G \le I-K$, and
  so
  \begin{equation*}
    X = A^\frac12 K A^\frac12 \le A^\frac12 (I-G^*G) A^\frac12 = S.
  \end{equation*}
  This shows $S$ is maximal with respect to the property
  \eqref{sep27c}.
\end{proof}

\bibliographystyle{amsplain}
\bibliography{fr}

\end{document}